\theoremstyle{plain}
\newtheorem{thm}{Theorem}[section]
\newtheorem{cor}[thm]{Corollary}
\newtheorem{lem}[thm]{Lemma}
\newtheorem{prop}[thm]{Proposition}
\theoremstyle{definition}
\newtheorem{defi}[thm]{Definition}
\theoremstyle{remark}
\newtheorem{rem}[thm]{Remark}
\numberwithin{equation}{section}
\newcommand{\average}{{\mathchoice {\kern1ex\vcenter{\hrule height.4pt
width 6pt depth0pt} \kern-9.7pt} {\kern1ex\vcenter{\hrule
height.4pt width 4.3pt depth0pt} \kern-7pt} {} {} }}
\def\R{\mathbb{R}}
\begin{document}

\title{Regularity theory for general stable operators}

\author{Xavier Ros-Oton}
\address{The University of Texas at Austin, Department of Mathematics, 2515 Speedway, Austin, TX 78751, USA}
\email{ros.oton@math.utexas.edu}

\author{Joaquim Serra}
\address{Arcvi and Universitat Polit\`ecnica de Catalunya, Departament Matem\`atica Aplicada I, Diagonal 647, 08028 Barcelona, Spain}
\email{joaquim@arcvi.io}

\thanks{The authors were supported by grants MTM2011-27739-C04-01 (Spain), and 2009SGR345 (Catalunya)}
\keywords{stable L\'evy processes, interior regularity, boundary regularity.}
\subjclass[2010]{35B65; 60G52; 47G30.}

\maketitle

\begin{abstract}
We establish sharp regularity estimates for solutions to $Lu=f$ in $\Omega\subset\mathbb R^n$, being $L$ the generator of any stable and symmetric L\'evy process.
Such nonlocal operators $L$ depend on a finite measure on $S^{n-1}$, called the spectral measure.

First, we study the interior regularity of solutions to $Lu=f$ in $B_1$.
We prove that if $f$ is $C^\alpha$ then $u$ belong to $C^{\alpha+2s}$ whenever $\alpha+2s$ is not an integer.
In case $f\in L^\infty$, we show that the solution $u$ is $C^{2s}$ when $s\neq1/2$, and $C^{2s-\epsilon}$ for all $\epsilon>0$ when $s=1/2$.

Then, we study the boundary regularity of solutions to $Lu=f$ in $\Omega$, $u=0$ in $\mathbb R^n\setminus\Omega$, in $C^{1,1}$ domains $\Omega$.
We show that solutions $u$ satisfy $u/d^s\in C^{s-\epsilon}(\overline\Omega)$ for all $\epsilon>0$, where $d$ is the distance to $\partial\Omega$.

Finally, we show that our results are sharp by constructing two counterexamples.
\end{abstract}

\vspace{4mm}

\section{Introduction and results}

The regularity of solutions to integro-differential equations has attracted much interest in the last years, both in the Probability and in the PDE community.
This type of equations arise naturally in the study of L\'evy processes, which appear in many different situations, from Physics to Biology or Finance.

A very important class of L\'evy processes are the $\alpha$-stable processes, with $\alpha\in(0,2)$; see \cite{Bertoin} and \cite{ST}.
These are processes satisfying self-similarity properties.
More precisely, $X_t$ is said to be $\alpha$-stable if
\[  X_1 \stackrel{d}{=} \frac{1}{t^{1/\alpha}}X_t\qquad \textrm{for all}\ t > 0.\]
These processes are the equivalent to Gaussian random processes when dealing with infinite variance random variables.
Indeed, the Generalized Central Limit Theorem states that, under certain assumptions, the distribution of the sum of infinite variance random variables converges to a stable distribution (see for example \cite{ST} for a precise statement of this result).

Stable processes can be used to model real-world phenomena \cite{ST,JW}, and in particular they are commonly used in Mathematical Finance; see for example \cite{Nature,Finance,Nolan2,Nolan3,PS,BNR} and references therein.

The infinitesimal generator of any symmetric stable L\'evy process is of the form
\begin{equation}\label{operator-L}
Lu(x)=\int_{S^{n-1}}\int_{-\infty}^{+\infty}\bigl(u(x+\theta r)+u(x-\theta r)-2u(x)\bigr)\frac{dr}{|r|^{1+2s}}\,d\mu(\theta),
\end{equation}
where $\mu$ is any nonnegative and finite measure on the unit sphere, called the \emph{spectral measure}, and $s\in(0,1)$.

The aim of this paper is to establish new and sharp interior and boundary regularity results for general symmetric stable operators \eqref{operator-L}.

Remarkably, the only ellipticity assumptions in all our results will be
\begin{equation}\label{ellipt-const}
0<\lambda\leq \inf_{\nu\in S^{n-1}}\int_{S^{n-1}}|\nu\cdot\theta|^{2s}d\mu(\theta),\qquad  \int_{S^{n-1}}d\mu\leq \Lambda<\infty.
\end{equation}
Notice that these hypotheses are satisfied for \emph{any} stable operator whose spectral measure $\mu$ is $n$-dimensional, i.e., such that there is no hyperplane $V$ of $\R^n$ such that $\mu$ is supported on~$V$.
Notice also that in case that the spectral measure $\mu$ is supported on an hyperplane $V$, then no regularity result holds.

When the spectral measure is absolutely continuous, $d\mu(\theta)=a(\theta)d\theta$, then these operators can be written as
\begin{equation}\label{operator-L2}
Lu(x)=\int_{\R^n}\bigl(u(x+y)+u(x-y)-2u(x)\bigr)\frac{a(y/|y|)}{|y|^{n+2s}}\,dy,
\end{equation}
where $a\in L^1(S^{n-1})$ is a nonnegative function.

The most simple example of stable L\'evy process $X_t$ in $\R^n$ is the one corresponding to $d\mu(\theta)=c\,d\theta$, with $c>0$.
In this case, the operator $L$ is a multiple of the fractional Laplacian $-(-\Delta)^s$.
Another simple example is given by $X_t=(X^1_t,...,X^n_t)$, being $X^i_t$ independent symmetric stable processes in dimension 1.
In this case, the infinitesimal generator of $X_t$ is
\begin{equation}\label{op-deltas}
-Lu=(-\partial_{x_1x_1})^su+\cdots+(-\partial_{x_nx_n})^su,
\end{equation}
and its spectral measure consist on $2n$ delta functions.
For example, when $n=2$ we have $\mu=\delta_{(1,0)}+\delta_{(0,1)}+\delta_{(-1,0)}+\delta_{(0,-1)}$ (up to a multiplicative constant).

The regularity of solutions to $Lu=f$ (or $Lu=0$) for operators $L$ like \eqref{operator-L2}, \eqref{operator-L}, or related ones, has been widely investigated; see the works by Bass, Kassmann, Schwab, Silvestre, Sztonyk, and Bogdan, among others \cite{BassChen,KS,KRS,Bass,Sz,KM,Bass2,Silvestre,BCF,Bogdan1,Bogdan2,CS,Landkov}.
A typical assumption in some of these results is that
\begin{equation}\label{absol}
0<c\leq a(\theta)\leq C\qquad \textrm{in}\ S^{n-1}.
\end{equation}
Still, the results in \cite{KRS}, \cite{BCF}, \cite{KS}, and \cite{KM} do not require this assumption, and they apply to all operators of the form \eqref{operator-L2} satisfying
\begin{equation}\label{cone}
a(\theta)\geq c>0\quad \textrm{in a subset}\ \Sigma\subset S^{n-1} \ \textrm{of positive measure};
\end{equation}
see also \cite{SS}.
Furthermore, the results of \cite{KS} and \cite{BassChen} do not assume the spectral measure to be absolutely continuous, and apply also to the operator \eqref{op-deltas} (and also to $x$-dependent operators of the type \eqref{op-deltas}).

An important difficulty when studying the regularity for operators \eqref{operator-L} is that no Harnack inequality holds in general; see \cite{KS,BassChen} and also \cite{Bogdan1,Bogdan2}.
Also, the Fourier symbols of these operators are in general only H\"older continuous, so that the usual Fourier multiplier theorems \cite{Stein,GrubbActa,Lizorkin} can not be used to show our results.

Probably because of these difficulties, for general operators \eqref{operator-L} even the H\"older regularity of solutions was not known.
Moreover, the sharp H\"older exponent in the regularity of such solutions is only known for the case in which $\mu$ is absolutely continuous and \eqref{absol} holds.

Here we establish sharp regularity results in H\"older spaces for \emph{all} stable operators~\eqref{operator-L}.
Notice that, as explained above, for general operators \eqref{operator-L} even the H\"older continuity of solutions is new.

Our first result reads as follows.

\begin{thm}\label{thm-interior-ball}
Let $s\in(0,1)$, and let $L$ be any operator of the form \eqref{operator-L}-\eqref{ellipt-const}.
Let $u$ be any bounded weak solution to
\begin{equation}\label{eq-ball}
L u =f\quad {in}\ B_1
\end{equation}
Then,
\begin{itemize}
\item[(a)] If $f\in L^\infty(B_1)$ and $u\in L^\infty(\R^n)$,
\[\|u\|_{C^{2s}(B_{1/2})}\leq C\left(\|u\|_{L^\infty(\R^n)}+\|f\|_{L^\infty(B_1)}\right)\quad \textrm{if}\ s\neq\frac12,\]
and
\[\|u\|_{C^{2s-\epsilon}(B_{1/2})}\leq C\left(\|u\|_{L^\infty(\R^n)}+\|f\|_{L^\infty(B_1)}\right)\quad \textrm{if}\ s=\frac12,\]
for all $\epsilon>0$.
\item[(b)] If $f\in C^\alpha(B_1)$ and $u\in C^\alpha(\R^n)$ for some $\alpha>0$, then
\begin{equation}\label{estimate-thm-1-b}
\|u\|_{C^{\alpha+2s}(B_{1/2})}\leq C\left(\|u\|_{C^\alpha(\R^n)}+\|f\|_{C^\alpha(B_1)}\right)
\end{equation}
whenever $\alpha+2s$ is not an integer.
\end{itemize}
The constant $C$ depends only on $n$, $s$, and the ellipticity constants \eqref{ellipt-const}.
\end{thm}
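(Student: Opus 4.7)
The plan is to establish both (a) and (b) by a contradiction/blow-up argument combined with a Liouville theorem for solutions of $Lu=0$ on $\mathbb{R}^n$. This is the natural route here since, as emphasized in the introduction, neither the Harnack inequality nor Fourier multiplier theorems are available under the hypothesis \eqref{ellipt-const} alone.

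The first ingredient I would isolate is a \emph{Liouville theorem}: if $v$ is locally bounded, satisfies $Lv=0$ weakly on $\mathbb{R}^n$ for some $L$ of class \eqref{operator-L}-\eqref{ellipt-const}, and $|v(x)|\leq C(1+|x|)^\beta$ with $\beta<2s$ and $\beta$ not an integer, then $v$ is a polynomial of degree $\leq\lfloor\beta\rfloor$; in particular, $v$ is constant whenever $\beta<1$. Part (b) requires the analogous statement with $\beta<\alpha+2s$. The second ingredient is a \emph{compactness statement} for the operator class: any sequence of spectral measures $\mu_k$ satisfying \eqref{ellipt-const} uniformly has a subsequence weakly-$*$ convergent to some $\mu_\infty$ still satisfying \eqref{ellipt-const}, with the associated operators converging on smooth test functions of suitable decay. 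This enables passage to the limit in rescaled sequences, together with the standard stability of weak solutions under locally uniform convergence of the data.

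With these in hand, one argues by contradiction. Assuming the estimate in (a) fails, there exist sequences $L_k$, $u_k$, $f_k$ with $\|u_k\|_{L^\infty(\mathbb{R}^n)}+\|f_k\|_{L^\infty(B_1)}\leq 1$ and $M_k:=[u_k]_{C^{2s}(B_{1/2})}\to\infty$. A standard worst-point/worst-radius selection produces $x_k\in B_{1/2}$ and $r_k\downarrow 0$ such that the rescaled functions
\[
v_k(x)\;=\;\frac{u_k(x_k+r_kx)-u_k(x_k)}{M_k\, r_k^{2s}}
\]
satisfy $|v_k(x)|\leq C(1+|x|^{2s})$, have oscillation of order one on $B_1$, and solve $L_k v_k = M_k^{-1} f_k(x_k+r_k\,\cdot\,)$ with right-hand side tending to zero. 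Extracting subsequences one passes to a limit $v_\infty$ which is $L_\infty$-harmonic on $\mathbb{R}^n$ with sub-$2s$ growth, hence constant by Liouville, contradicting the order-one oscillation. The case $s=\tfrac12$ costs the $\epsilon$ precisely because the Liouville theorem is borderline at $\beta=1$. For part (b) the scheme is identical once one subtracts from $u_k$ the Taylor polynomial of degree $\lfloor\alpha+2s\rfloor$ at $x_k$ before rescaling, and appeals to the higher-order Liouville statement.

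The principal obstacle is the Liouville theorem itself: since $\mu$ may be singular (e.g.\ a sum of Dirac masses as in \eqref{op-deltas}) and no Harnack inequality is available, the classical proofs do not apply. The natural route is a bootstrap scheme starting from a weak $C^\gamma$ interior estimate (proved separately, e.g.\ by an energy/De~Giorgi-type argument or a Krylov--Safonov-type method adapted to this class) and exploiting the exact scaling invariance $v(x)\mapsto R^{-\beta}v(Rx)$ of the equation: a globally-defined $L$-harmonic function of sub-$2s$ growth, rescaled to the unit ball, yields a uniformly equicontinuous family whose only possible limits are constants, which forces $v$ itself to be constant. Making this work uniformly across the full class \eqref{ellipt-const}, and for higher-order polynomials in part (b), will be the technically delicate heart of the argument.
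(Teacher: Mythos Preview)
Your overall architecture---Liouville theorem plus compactness of the operator class plus blow-up contradiction---matches the paper exactly. The gap is in how you propose to prove the Liouville theorem itself.

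You suggest obtaining the seed $C^\gamma$ estimate ``by an energy/De~Giorgi-type argument or a Krylov--Safonov-type method adapted to this class.'' Neither of these is known to work under hypothesis \eqref{ellipt-const} alone. Krylov--Safonov relies on the Harnack inequality, which fails for general stable operators (as the introduction emphasizes). De~Giorgi-type arguments for nonlocal operators typically need either a pointwise lower bound on the kernel (something like \eqref{absol} or \eqref{cone}) or some comparable structure that a purely singular spectral measure need not have; under \eqref{ellipt-const} alone, even H\"older continuity of solutions was open prior to this paper. So the step you flag as ``the technically delicate heart of the argument'' is not just delicate---the tools you name do not apply.

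The paper's actual route is quite different: it goes through the \emph{heat kernel} $p(t,x)$ of $L$. One shows (i) that $p(1,\cdot)$ is globally Lipschitz, which follows immediately from the symbol bound $A(\xi)\geq\lambda|\xi|^{2s}$, and (ii) that $p(1,\cdot)$ has ``decay in average'' in the sense that $\int (1+|x|^{2s-\delta})\,p(1,x)\,dx<\infty$ for every $\delta>0$, proved by testing the heat equation against $(1+|x|^2)^{s-\delta}$. Any entire solution $v$ of sub-$2s$ growth then satisfies $v=p(1,\cdot)*v$, and splitting this convolution into near and far pieces yields a uniform $C^\gamma$ bound on the rescaled family $\rho^{-\beta}v(\rho\,\cdot)$; iterating on incremental quotients gives the full Liouville statement. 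This heat-kernel argument bypasses both Harnack and Fourier-multiplier theory, and is the genuinely new idea.

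A secondary point on the blow-up: for part~(a) you write $|v_k(x)|\leq C(1+|x|^{2s})$, but this is exactly the critical rate at which Liouville fails. The paper avoids this by first proving, for $w\in C^\infty_c(\R^n)$, an estimate $[w]_{C^{2s}(B_{1/2})}\leq C(\|f\|_{L^\infty}+\|w\|_{C^\alpha(\R^n)})$ with an intermediate exponent $\alpha\in(\lfloor 2s\rfloor,2s)$; the blow-up (which subtracts the best polynomial of degree $\lfloor 2s\rfloor$ and is organized via a monotone function $\theta(r)$ rather than a single worst point) then has $C^\alpha$ seminorm growing like $R^{2s-\alpha}$, strictly sub-$2s$. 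The passage from this intermediate estimate to the final one with $\|w\|_{L^\infty(\R^n)}$ on the right is done afterwards by a cutoff and weighted-norm interpolation, not inside the compactness argument.
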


Notice that when $s\neq\frac12$ we obtain a $C^{2s}$ estimate in part (a), and not only a $C^{2s-\epsilon}$ one.

Note also that in part (b) it is required that $u\in C^\alpha(\R^n)$ in order to have a $C^{\alpha+2s}$ estimate for $u$ in $B_{1/2}$.
When the spectral measure $\mu$ is not regular, the estimate is not true anymore if $u$ is not $C^\alpha$ in all of $\R^n$: we can construct a solution to $Lu=0$ in $B_1$, which satisfies $u\in C^{\alpha-\epsilon}(\R^n)$ but $u\notin C^{\alpha+2s}(B_{1/2})$; see Proposition \ref{contraexemple-interior}.

When the spectral measure is $C^\alpha(S^{n-1})$, then it is easy to see that one can replace the $C^\alpha(\R^n)$ norm of $u$ in \eqref{estimate-thm-1-b} by the $L^\infty(\R^n)$ norm; see Corollary \ref{cor-interior-ball}.
Also, when the equation is posed in the whole $\R^n$ then there is no such problem, and one has the estimate $\|u\|_{C^{\alpha+2s}(\R^n)}\leq C(\|u\|_{L^{\infty}(\R^n)}+\|f\|_{C^{\alpha}(\R^n)})$ ---which follows easily from \eqref{estimate-thm-1-b}.

Concerning the boundary regularity of solutions, our main result reads as follows.

\begin{thm}\label{thm-bdry-reg}
Let $s\in(0,1)$, $L$ be any operator of the form \eqref{operator-L}-\eqref{ellipt-const}, and $\Omega$ be any bounded $C^{1,1}$ domain.
Let $f\in L^\infty(\Omega)$, and $u$ be a weak solution of
\begin{equation}\label{eq-I-flat}
\left\{ \begin{array}{rcll}
L u &=&f&\textrm{in }\Omega \\
u&=&0&\textrm{in }\R^n\setminus\Omega.
\end{array}\right.
\end{equation}
Let $d$ be the distance to $\partial\Omega$.
Then, $u\in C^s(\R^n)$, and
\[\|u/d^s\|_{C^{s-\epsilon}(\overline\Omega)}\leq C\|f\|_{L^\infty(\Omega)}\]
for all $\epsilon>0$.
The constant $C$ depends only on $n$, $s$, $\Omega$, and the ellipticity constants \eqref{ellipt-const}.
\end{thm}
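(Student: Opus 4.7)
The plan is to prove the theorem in two stages. First, I would establish the pointwise bound $|u(x)| \le C \|f\|_\infty d(x)^s$ in $\Omega$ via a barrier argument. Second, I would upgrade this to the H\"older estimate for $u/d^s$ through a rescaling and iteration scheme that feeds back the interior regularity of Theorem \ref{thm-interior-ball}.

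For the first stage, I would construct a barrier $\phi\in C(\R^n)$ vanishing in $\R^n\setminus\Omega$, comparable to $d^s$ in a boundary strip $\Omega_{r_0}=\{x\in\Omega:d(x)<r_0\}$, and satisfying $L\phi\ge 1$ in $\Omega_{r_0}$. Using the $C^{1,1}$ regularity of $\partial\Omega$, near each boundary point $z$ I would flatten the boundary and take as a candidate a smooth cutoff of $\bigl((x-z)\cdot e_z\bigr)_+^s$, where $e_z$ is the inward unit normal at $z$. The main computation is the pointwise lower bound $L\phi\ge c>0$ on $\Omega_{r_0}$. Because the model $(x\cdot e)_+^s$ is one-dimensional along $e$, the nonlocal integral factorizes into a weighted integral of $|e\cdot\theta|^{2s}$ against $\mu$, and the first inequality in \eqref{ellipt-const} controls this quantity from below uniformly in $\mu$ and in the unit vector $e$. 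Combined with the standard comparison principle for operators \eqref{operator-L}--\eqref{ellipt-const}, this yields $|u|\le C\|f\|_\infty d^s$; a symmetric barrier argument also gives $u\in C^s(\R^n)$.

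For the second stage, fix a boundary point $z_0\in\partial\Omega$ and consider the rescalings $v_r(x):=r^{-s}u(z_0+rx)$. The linear bound gives $\|v_r\|_{L^\infty(B_1)}\le C\|f\|_\infty$, and the scaling relation yields $L v_r(x)=r^s f(z_0+rx)$, which tends to zero with $r$. Applying Theorem \ref{thm-interior-ball} on balls well separated from the rescaled boundary, one obtains uniform H\"older regularity of $v_r$ on such sets. A dyadic iteration then produces constants $Q_k$ with
\[
\|u-Q_k\, d^s\|_{L^\infty(B_{2^{-k}}(z_0)\cap\Omega)}\le C\|f\|_\infty 2^{-k(s+\alpha)},\qquad |Q_{k+1}-Q_k|\le C\|f\|_\infty 2^{-k\alpha},
\]
for any $\alpha<s$. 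Telescoping yields a boundary H\"older estimate for $u/d^s$ at $z_0$ with exponent $\alpha$, and combining with the interior $C^\alpha$ control of $u/d^s$ (again furnished by Theorem \ref{thm-interior-ball} and the linear bound) produces $\|u/d^s\|_{C^\alpha(\overline\Omega)}\le C\|f\|_\infty$; taking $\alpha=s-\epsilon$ gives the claim.

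The hard part will be the barrier construction in the first stage. For highly singular spectral measures, such as the sums of Dirac deltas underlying \eqref{op-deltas}, the operator reduces to a sum of one-dimensional fractional Laplacians along finitely many directions, and $L(d^s)$ cannot be estimated by isotropic methods. The ellipticity condition \eqref{ellipt-const} is precisely what ensures that for every inward normal $\nu$ enough of those directions are transverse to $\nu$ to make $L\phi$ bounded below uniformly, with a constant depending only on $\lambda$ and $\Lambda$. A secondary subtlety is that $d^s$ is not $L$-harmonic in a half-space for a general stable operator, so the iteration in the second stage must track a residual term; however, the same barrier analysis gives $|L(d^s)|\le C$ near $\partial\Omega$, and this residual can be absorbed into the forcing at each scale.
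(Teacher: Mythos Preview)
There is a genuine gap in Stage~2. Your iteration relies on the claim that $|L(d^s)|\le C$ near $\partial\Omega$, so that subtracting $Q_k d^s$ leaves a bounded forcing at each scale. This is false for general spectral measures: already Lemma~\ref{lem11} exhibits a logarithmic blow-up for the ball, and Proposition~\ref{prop-appendix} constructs a $C^\infty$ domain and an admissible $L$ with $L(d^s)\notin L^\infty(\Omega)$. (Incidentally, in a genuine half-space $(x\cdot e)_+^s$ \emph{is} exactly $L$-harmonic for every operator in the class, so the worry in your last paragraph is misplaced; the failure is only for the curved distance.) The remedy is to subtract the flat model $\bigl((x-z)\cdot\nu(z)\bigr)_+^s$ at each boundary point, which is genuinely $L$-harmonic, and convert to $d^s$ only at the very end via the pointwise comparison $|d^s-((x-z)\cdot\nu)_+^s|\le Cr^{2s}$ on $B_r(z)$.

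Even after that fix, the sentence ``a dyadic iteration then produces constants $Q_k$'' hides the main difficulty: what mechanism gives the improvement from scale $r$ to $r/2$? The standard Caffarelli-style boundary iteration relies on the Harnack inequality, and no Harnack inequality holds for general operators \eqref{operator-L}--\eqref{ellipt-const}. The paper takes a different route: Proposition~\ref{proponbdryreg} establishes the full expansion $|u(x)-Q(z)((x-z)\cdot\nu(z))_+^s|\le C|x-z|^\beta$ for any $\beta<2s$ in one shot, via a blow-up/compactness argument that terminates in the half-space Liouville theorem (Theorem~\ref{Liouv-half}); only then are interior estimates and the $d^s$-versus-flat comparison used to deduce $u/d^s\in C^{s-\epsilon}$. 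To make your scheme work you would need either a Harnack-free improvement-of-flatness lemma at each dyadic step (which is essentially the same compactness argument in disguise) or to adopt the compactness strategy directly.
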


For general stable operators \eqref{operator-L}, we expect this result to be optimal.
Indeed, we can construct a $C^\infty$ domain $\Omega$ for which $L(d^s)$ does not belong to $L^\infty(\Omega)$; see Proposition \ref{prop-appendix}.
Thus, even in $C^\infty$ domains and with $f\in C^\infty$, we do not expect solutions $u$ to satisfy $u/d^s\in C^s(\overline\Omega)$.

The estimate of Theorem \ref{thm-bdry-reg} was only known in case that the spectral measure $\mu$ is absolutely continuous and satisfies quite strong regularity assumptions.
Indeed, when \eqref{absol} holds, $a\in C^{1,\alpha}(S^{n-1})$, and $\Omega$ is $C^{2,\alpha}$, then the result is a particular case from our estimates in \cite{RS-K} for fully nonlinear equations.
Also, when $\Omega$ is $C^\infty$ and $a\in C^\infty(S^{n-1})$ then Theorem \ref{thm-bdry-reg} follows from the results of Grubb \cite{Grubb,Grubb2} for pseudodifferential operators satisfying the $\mu$-transmission property.

Even for the fractional Laplacian, the proof we present here is new and completely independent with respect to the ones in \cite{RS-Dir} and \cite{Grubb,Grubb2}.
Let us explain briefly the main ideas in the proofs of our results.

To prove Theorems \ref{thm-interior-ball} and \ref{thm-bdry-reg} we use some ideas introduced in \cite{Serra,RS-K,S-convex}.
Namely, all the proofs of the present paper have a similar structure in which we first establish a Liouville-type theorem in $\R^n$ (or $\R^n_+$ in case of boundary regularity), and then we deduce by a blow up and compactness argument an estimate for solutions to $Lu=f$ in, say, $B_1$.
An important difference with respect to the proofs \cite{Serra,RS-K,S-convex} is that here we do not have any $C^\gamma$ estimate that we can iterate in order to prove a Liouville theorem, and hence the proofs of the present Liouville theorems must be completely different from the ones in \cite{Serra,RS-K,S-convex}.

For example, in case of Theorem \ref{thm-interior-ball}, to prove the Liouville-type Theorem \ref{Liouv-entire} we need to establish first a $C^\gamma$ estimate in $\R^n$ via the heat kernel of the operator, to then iterate it and deduce the Liouville theorem.
Recall that even this first $C^\gamma$ estimate is new for general operators \eqref{operator-L}.
In case of Theorem \ref{thm-bdry-reg}, we also prove the Liouville-type Theorem \ref{Liouv-half} in a different way with respect to \cite{RS-K}.
Indeed, in \cite{RS-K} we first established a $C^\gamma$ estimate for $u/d^s$ by using a method of Caffarelli, which relies mainly on the Harnack inequality, and then we deduced from this a Liouville theorem in $\R^n_+$.
However, in the present context we do not have any Harnack inequality, and we have to establish Theorem~\ref{Liouv-half} using only the interior estimates for $u$ previously proven in Theorem \ref{thm-interior-ball}.

All the regularity estimates of this paper are for translation invariant equations.
Still, the methods presented here can be used to establish similar regularity results for non translation invariant equations (with continuous dependence on $x$), and also for parabolic equations $\partial_t u+Lu=f$ in $\Omega\times(0,T)$.
We plan to do this in a future work.

The paper is organized as follows.
In Section \ref{sec2} we establish a Liouville-type theorem in the entire space, Theorem \ref{Liouv-entire}.
In Section \ref{sec3} we prove Theorem \ref{thm-interior-ball}.
Then, in Section \ref{sec5} we establish a Liouville-type theorem in the half-space, Theorem \ref{Liouv-half}, and in Section \ref{sec6} we prove Theorem \ref{thm-bdry-reg}.
Finally, in Section \ref{sec7} we prove Proposition \ref{prop-appendix}.

\section{A Liouville theorem in the entire space}
\label{sec2}

The aim of this section is to prove the following.

\begin{thm}\label{Liouv-entire}
Let $s\in(0,1)$, and let $L$ be any operator of the form \eqref{operator-L}-\eqref{ellipt-const}.
Let $u$ be any weak solution of
\[Lu=0\quad \textrm{in}\ \R^n\]
satisfying the growth condition
\[\|u\|_{L^\infty(B_R)}\leq CR^{\beta}\qquad \textrm{for all}\ R\geq1,\]
for some $\beta<2s$.

Then, $u$ is a polynomial of degree at most $\lfloor \beta \rfloor$, where $\lfloor x \rfloor$ denotes the integer part of $x$.
\end{thm}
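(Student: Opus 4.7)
My plan is to combine a quantitative interior Hölder estimate for $L$-harmonic functions with a finite-difference iteration, and then read off the sharp polynomial degree from the growth assumption.

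\emph{Step 1: A $C^\gamma$ estimate via the heat kernel.} The ellipticity hypothesis \eqref{ellipt-const} forces the Fourier symbol $\psi(\xi):=\int_{S^{n-1}}|\xi\cdot\theta|^{2s}\,d\mu(\theta)$ of $L$ to satisfy $\psi(\xi)\geq \lambda|\xi|^{2s}$, so the heat kernel $p_t:=\mathcal{F}^{-1}(e^{-t\psi})$ is a Schwartz function obeying $\|D^k p_t\|_{L^1(\R^n)}\leq C_k\,t^{-k/(2s)}$. For bounded weak solutions of $Lu=0$ in $\R^n$ one has the representation $u=p_t*u$, and differentiating under the convolution yields $u\in C^\infty$ together with the Liouville theorem for bounded solutions. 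Combining this smoothing with a cutoff separating the near-field $u\chi_{B_2}$ from the far-field produces a local estimate
\[
[u]_{C^\gamma(B_1)} \leq C\,\|u\|_{L^\infty(B_2)} + C\int_{\R^n\setminus B_2}\frac{|u(y)|}{|y|^{n+2s}}\,dy
\]
for some $\gamma\in(0,\min(2s,1))$, valid for every bounded weak solution of $Lu=0$ in $B_2$.

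\emph{Step 2: Scaling and use of the growth condition.} A standard rescaling of Step 1 gives, for any solution of $Lu=0$ in $\R^n$ and every $R\geq1$,
\[
[u]_{C^\gamma(B_{R/2})} \leq C R^{-\gamma}\|u\|_{L^\infty(B_R)} + C R^{2s-\gamma}\int_{\R^n\setminus B_R}\frac{|u(y)|}{|y|^{n+2s}}\,dy.
\]
The growth bound $\|u\|_{L^\infty(B_R)}\leq CR^\beta$ with $\beta<2s$ makes the tail integral bounded by $C R^{\beta-2s}$, and both terms collapse into
\[
[u]_{C^\gamma(B_{R/2})} \leq C R^{\beta-\gamma}\qquad \text{for all }R\geq 1.
\]

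\emph{Step 3: Iteration via finite differences.} Since $L$ commutes with translations, for every $h\in\R^n$ the increment $\Delta_h u(x):=u(x+h)-u(x)$ still satisfies $L(\Delta_h u)=0$ in $\R^n$, and Step 2 gives $\|\Delta_h u\|_{L^\infty(B_R)}\leq C|h|^\gamma R^{\beta-\gamma}$. Iterating $m$ times yields $\|\Delta_h^m u\|_{L^\infty(B_R)}\leq C|h|^{m\gamma}R^{\beta-m\gamma}$; this iteration is legitimate because $\beta-k\gamma<2s$ keeps the tail integral convergent at every step. Choosing $m$ so large that $m\gamma>\beta$ and letting $R\to\infty$ at any fixed point, we conclude $\Delta_h^m u\equiv 0$ for every $h$. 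The classical characterization of polynomials via vanishing higher-order differences then forces $u$ to be a polynomial, and matching the growth $R^d\lesssim R^\beta$ yields $\deg u\leq\lfloor\beta\rfloor$.

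\emph{Main obstacle.} The genuinely new ingredient is Step 1: extracting a positive amount of Hölder regularity from the one-sided bound $\int_{S^{n-1}}|\nu\cdot\theta|^{2s}\,d\mu(\theta)\geq\lambda$ alone, without absolute continuity of $\mu$, a bounded density, or a Harnack inequality. The quantitative smoothing $\|D^k p_t\|_{L^1}\leq C_k t^{-k/(2s)}$ coming from the lower bound on $\psi$ is the key input, but the delicate point is controlling the far-field contribution in $u=p_t*u$ without losing the local gain in regularity; this is what limits $\gamma$ and must be set up carefully so that the iteration in Step 3 closes.
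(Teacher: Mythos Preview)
Your overall strategy---heat-kernel representation followed by iteration on finite differences---is exactly the paper's, but Step~1 has a genuine gap. You claim that $p_t$ is Schwartz and that the far field contributes a tail $\int_{\R^n\setminus B_2}|u(y)|\,|y|^{-n-2s}\,dy$; both assertions are false for general spectral measures. The symbol $\psi(\xi)=\int_{S^{n-1}}|\xi\cdot\theta|^{2s}\,d\mu(\theta)$ is only of class $C^{2s}$ at the origin, so its inverse Fourier transform $p_t$ does \emph{not} decay rapidly. The paper explicitly notes that the pointwise bound $p(1,x)\le C(1+|x|)^{-n-2s}$ fails---for instance when $L=\sum_i(-\partial_{x_ix_i})^s$, whose heat kernel is a tensor product of one-dimensional stable densities and decays only like $|x_i|^{-1-2s}$ along each coordinate axis. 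Hence the tail term in your displayed local estimate, and with it the estimate itself, is not available in this generality; the ``Main obstacle'' you flag is exactly where your argument breaks.

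The paper's substitute for pointwise decay is a \emph{moment} bound,
\[
\int_{\R^n}\bigl(1+|x|^{2s-\delta}\bigr)\,p(1,x)\,dx\le C,
\]
proved by testing $\partial_t p=Lp$ against $(1+|x|^2)^{s-\delta}$ (for which $|L(1+|x|^2)^{s-\delta}|\le C$ globally), together with the Lipschitz bound $[p(1,\cdot)]_{C^{0,1}}\le C$, which does follow from $\psi\ge\lambda|\xi|^{2s}$. One then works directly with the rescaled global solution $v(x)=\rho^{-\beta}u(\rho x)$, uses $v=p(1,\cdot)*v$, and splits the convolution at a free radius $M$: the near part contributes at most $CM^{n+\beta}|x-x'|$ via the Lipschitz bound, and the far part at most $CM^{-\delta}$ via the moment bound combined with the growth $|v(y)|\le C(1+|y|)^\beta$ and the choice $2\delta=2s-\beta$. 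Optimizing over $M$ yields $[v]_{C^\gamma(B_1)}\le C$, i.e.\ $[u]_{C^\gamma(B_\rho)}\le C\rho^{\beta-\gamma}$; from this point on your Steps~2--3 go through verbatim.
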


This Liouville theorem will be used in the proof of Theorem \ref{thm-interior-ball}.
For related Liouville theorems, see \cite{FW,Fall,Chen-Liouv}.

\addtocontents{toc}{\protect\setcounter{tocdepth}{1}}  

\subsection{Heat kernel: regularity and decay in average}

The heat kernel for the operator $L$ is defined via Fourier transform as
\begin{equation}\label{kernel-fourier}
p(t,\cdot ) = \mathcal F^{-1}\bigl( \exp (- A(\xi)t) \bigr),
\end{equation}
where $A(\xi)$ is the Fourier symbol of the operator $L$.

The symbol $A(\xi)$ of $L$ can be explicitly written in terms of $s$ and the spectral measure $\mu$.
Indeed, it is given by
\begin{equation}\label{symbol}
A(\xi)=\int_{S^{n-1}}|\xi\cdot\theta|^{2s}d\mu(\theta);
\end{equation}
see for example \cite{ST}.
Notice that $A(\xi)$ is homogeneous of order $2s$.

In order to prove Theorem \ref{Liouv-entire}, we will need to show some kind of decay for the heat kernel of $L$.

The decay of the heat kernel has been studied in \cite{Dz} and \cite{GH} in case that $d\mu(\theta)=a(\theta)d\theta$ (see also \cite{Bogdan2,W}).
It turns out that, when $a\in L^\infty(S^{n-1})$, the heat kernel $p(t,x)$ associated to the operator \eqref{operator-L2} satisfies
\begin{equation}\label{bound-heat}
p(1,x)\leq \frac{C}{1+|x|^{n+2s}}.
\end{equation}
However, for general operators \eqref{operator-L}, the heat kernel does not satisfy in general \eqref{bound-heat}.
For example, when $X_t=(X^1_t,...,X^n_t)$, being $X^i$ independent symmetric stable processes in dimension 1, $p$ satisfies
\[p(t,x)=p_1(t,x_1)\cdots p_1(t,x_n),\]
and thus it does not satisfy \eqref{bound-heat}.

We prove here that for general operators \eqref{operator-L}, even if there is no decay of the form \eqref{bound-heat}, the heat kernel $p(1,x)$ decays ``in average'' faster than $|x|^{-n-2s+\delta}$ for any $\delta>0$.
This is stated in the following result.

\begin{prop}\label{heat-kernel-decay}
Let $s\in(0,1)$, and let $L$ be any operator of the form \eqref{operator-L}-\eqref{ellipt-const}.
Let $p(t,x)$ be the heat kernel associated to $L$.
Then,
\begin{itemize}
\item[(a)] For all $\delta>0$,
\begin{equation}\label{avdecay}
\int_{\R^n}\bigl(1+|x|^{2s-\delta}\bigr)p(1,x)dx\leq C.
\end{equation}
\item[(b)] Moreover,
\[ [p(1,x)]_{C^{0,1}(\R^n)} \le C.\]
\end{itemize}
The constant $C$ depends only on $n$, $s$, $\delta$, and the ellipticity constants \eqref{ellipt-const}.
\end{prop}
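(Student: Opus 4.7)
The plan is to exploit the explicit Fourier representation $\widehat{p(1,\cdot)}(\xi) = e^{-A(\xi)}$ together with the two-sided bound $\lambda|\xi|^{2s} \leq A(\xi) \leq \Lambda|\xi|^{2s}$, which is immediate from the ellipticity hypothesis \eqref{ellipt-const} and the $2s$-homogeneity of $A$ in \eqref{symbol}. Since $A\to\infty$ at infinity, $e^{-A}\in L^1(\R^n)\cap L^\infty(\R^n)$; because $L$ generates a symmetric stable L\'evy process, $p(1,\cdot)$ is a nonnegative probability density, so $\int p(1,x)\,dx=e^{-A(0)}=1$, and the symmetry of $A$ yields $\int p(1,x)\cos(x\cdot\xi)\,dx = e^{-A(\xi)}$.

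For part (b) I would differentiate under the inverse Fourier transform: $\nabla p(1,x) = \mathcal{F}^{-1}\bigl(i\xi\,e^{-A(\xi)}\bigr)(x)$, which is justified because the lower bound $A(\xi)\geq\lambda|\xi|^{2s}$ gives $\xi\,e^{-A(\xi)}\in L^1(\R^n)$. Hence
\[
[p(1,\cdot)]_{C^{0,1}(\R^n)}\leq \|\nabla p(1,\cdot)\|_{L^\infty(\R^n)} \leq (2\pi)^{-n}\int_{\R^n}|\xi|\,e^{-\lambda|\xi|^{2s}}\,d\xi \leq C(n,s,\lambda).
\]

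For part (a) the idea is to use the classical identity
\[
|x|^\beta = c_{n,\beta}\int_{\R^n}\bigl(1-\cos(x\cdot\xi)\bigr)\frac{d\xi}{|\xi|^{n+\beta}}, \qquad 0<\beta<2,
\]
to trade a pointwise decay bound for a low-frequency bound on the symbol. Taking $\beta=2s-\delta\in(0,2s)$, integrating against $p(1,x)$, and applying Fubini (legitimate by nonnegativity of the integrand) gives
\[
\int_{\R^n}|x|^{2s-\delta}\,p(1,x)\,dx = c_{n,2s-\delta}\int_{\R^n}\bigl(1-e^{-A(\xi)}\bigr)\frac{d\xi}{|\xi|^{n+2s-\delta}}.
\]
Splitting at $|\xi|=1$: on $\{|\xi|\leq 1\}$, the bound $1-e^{-A(\xi)}\leq A(\xi)\leq \Lambda|\xi|^{2s}$ leaves an integrable singularity of order $|\xi|^{\delta-n}$; on $\{|\xi|>1\}$, the trivial bound $1-e^{-A(\xi)}\leq 1$ together with $\int_{|\xi|>1}|\xi|^{\delta-2s-n}d\xi<\infty$ finishes the estimate. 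Combined with $\int p(1,x)\,dx=1$, this yields \eqref{avdecay}.

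The conceptual point, and the only real obstacle, is that unlike when $a\in L^\infty(S^{n-1})$ one cannot expect the pointwise tail bound \eqref{bound-heat} for $p(1,\cdot)$, so the decay must be extracted in an averaged sense from the low-frequency behavior of the symbol rather than from a pointwise estimate on the kernel. The Fourier identity above is precisely the mechanism that converts the ellipticity upper bound $A(\xi)\leq\Lambda|\xi|^{2s}$ near the origin into an averaged spatial decay, and the threshold $\beta<2s$ is sharp, consistently with $\mathbb{E}\,|X_1|^{2s}=\infty$ for the underlying stable process.
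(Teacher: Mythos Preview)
Your proof is correct. Part (b) matches the paper's argument exactly: both observe that the lower bound $A(\xi)\ge\lambda|\xi|^{2s}$ forces $\widehat{p(1,\cdot)}$ (and $\xi\,\widehat{p(1,\cdot)}$) to lie in $L^1$, giving the Lipschitz bound.

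For part (a) you take a genuinely different route from the paper. The paper argues on the physical side: it introduces the weight $\varphi(x)=(1+|x|^2)^{s-\delta}$, shows by a scaling argument that $|L\varphi|\le C$ globally, and then uses the semigroup identity $\int\varphi\,p(1,\cdot)-1=\int_0^1\!\int L\varphi\cdot p(t,\cdot)\,dx\,dt$ to conclude. This requires justifying the integration by parts $L$ from $p$ to $\varphi$, which the paper handles by an approximation argument (first assuming $\mu$ has bounded density so that $p$ decays pointwise). Your argument works purely on the Fourier side: the L\'evy--Khintchine-type identity $|x|^\beta=c_{n,\beta}\int(1-\cos(x\cdot\xi))|\xi|^{-n-\beta}d\xi$ converts the fractional moment directly into $\int(1-e^{-A(\xi)})|\xi|^{-n-\beta}d\xi$, where the upper bound $A(\xi)\le\Lambda|\xi|^{2s}$ controls the low frequencies and integrability of $|\xi|^{-n-\beta}$ handles the high frequencies. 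Tonelli applies cleanly because everything is nonnegative, so no approximation step is needed. Your approach is shorter and more self-contained; the paper's approach has the advantage of being closer in spirit to the barrier techniques used later, and of extending more transparently to weights that are not pure powers.

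One small remark: your argument as written needs $\beta=2s-\delta\in(0,2)$, i.e.\ $\delta<2s$, but this is the only interesting range (for $\delta\ge 2s$ the weight $1+|x|^{2s-\delta}$ is bounded and the estimate is just $\int p(1,\cdot)=1$).
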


\begin{proof} (a)
We first claim that the function
\[\varphi(x)=(1+|x|^2)^{s-\delta}\]
satisfies
\[|L\varphi|\le C\qquad\textrm{in all of}\ \R^n.\]
Indeed, observe that for all $\rho\ge 1$, the rescaled function $\varphi_\rho(x) =\rho^{-2s+2\delta}\varphi(\rho x)$ satisfies $\varphi_\rho(x) =(\rho^{-2}+|x|^2)^{s-\delta}$ and $|L\varphi_\rho|\le C$ in $B_2\setminus B_1$, with $C$ independent of $\rho$.
Therefore, scaling back we obtain that $|L\varphi|\le C\rho^{-2\delta}$ in $B_{2\rho} \setminus B_{\rho}$ for every $\rho\geq1$.
Hence, $L\varphi$ is bounded in all of $\R^n$, as claimed.

Now, we have
\begin{equation}\label{potrom}\begin{split}
\int_{\R^n}  \varphi(x) p(1,x)\,dx-1 &= \int_{\R^n} \varphi(x)\bigl(p(1,x)-p(0,x)\bigr)dx= \int_0^1dt \int_{\R^n} \varphi(x) p_t(t,x) dx \\
&= \int_0^1 dt \int_{\R^n} \varphi(x) Lp(t,x)dx=\int_0^1 dt \int_{\R^n} L\varphi(x)p(t,x)dx.
\end{split}\end{equation}
Thus, it follows that
\[\int_{\R^n}  \varphi(x) p(1,x)\,dx\leq 1+\int_0^1 dt \int_{\R^n} |L\varphi(x)|p(t,x)dx\leq C,\]
and (a) follows.

(Note that $C$ depends only on $n$, $s$, $\lambda$, and $\Lambda$.
Hence, in order to justify rigorously the last integration by parts in \eqref{potrom} we may assume first that $\mu(d\theta)= a(\theta)d\theta$, with $a\in L^\infty$ ---so that $p$ and all its derivatives decay---, and then by approximation the same identity holds for any spectral measure $\mu$.)

(b)
Notice that, by \eqref{symbol} and by definition of the ellipticity constants \eqref{ellipt-const}, we clearly have
\[0<\lambda|\xi|^{2s}\leq A(\xi)\leq \Lambda|\xi|^{2s}.\]
Using this, it follows immediately from the expression \eqref{kernel-fourier} that the Fourier transform of $p(1,x)$ is rapidly decreasing and, therefore, the result follows.
\end{proof}

\begin{rem}
In case that $L$ is an operator of the form \eqref{operator-L2} and $a$ belongs to the space $L\log L(S^{n-1})$, Proposition \ref{heat-kernel-decay} (a) is an immediate consequence of the results of Glowacki-Hebisch \cite{GH}.
Indeed, it was proved in \cite{GH} that, under this assumption on $a$, the heat kernel satisfies $p(1,x)\leq C|x|^{-n-2s}\omega(x/|x|)$ for some function $\omega\in L^1(S^{n-1})$.
\end{rem}

\subsection{Proof of Theorem \ref{Liouv-entire}}

Using Proposition \ref{heat-kernel-decay}, we can now give the:

\begin{proof}[Proof of Theorem \ref{Liouv-entire}]
Given $\rho\ge1$ let
\[v(x) = \rho^{-\beta}u(\rho x).\]
 Then, $v$ clearly satisfies $Lv=0$ in the whole $\R^n$. Moreover,
\begin{equation}\label{gctrlv}
 \|v\|_{L^\infty(B_R)} =  \|\rho^{-\beta}u\|_{L^\infty(B_\rho R)} \le C\rho^{-\beta}(\rho R)^{\beta}\le CR^{\beta}.
 \end{equation}
Then, formally we have
\[ v -p(1,\cdot )\ast v = \bigl[ p(t,\cdot)\ast v \bigr]_{t=0}^{t=1} = \int_0^1 \partial_t p\ast v \,dt = \int_0^1 Lp\ast v dt = \int_0^1 p\ast Lv =0\]
and thus
\begin{equation}\label{cosa-v}
 v \equiv  p(1,\cdot)\ast v.
\end{equation}
This computation is formal, since we did not checked that the integrals defining the convolutions are finite and since $Lv$ is in principle only defined in weak sense (in the sense of distributions).

To prove rigorously \eqref{cosa-v},  we have to do the previous computation in the weak formulation, as follows.
Let
\[V(x,t)= (p(t,\cdot)\ast v) (x).\]
Then, using the growth control on $v$ and Proposition \ref{heat-kernel-decay} (a), it follows that $V$ is a weak solution of $V_t=LV$ in $(0,+\infty)\times \R^n$.
Thus, for all $\eta\in C^\infty_c\bigl((0,1)\times\R^n\bigr)$ we have
\begin{equation}\label{order-integration}\begin{split}
-\int_{0}^1\int_{\R^n} V \eta_t \,dx\,dt&=  \int_{0}^1\int_{\R^n} V L \eta \,dx\, dt
\\&= \int_{0}^1 \int_{\R^n} p(t,z) \int_{\R^n} v(x-z) L \eta(x,t) \,dx\, dz\, dt = 0.
\end{split}\end{equation}
In the last identity we have used that $\int_{\R^n} v(x-z) L \eta(x,t) dx =0$ for all $x$ and $t$, which follows from the fact that $v$ is a weak solution of $Lv=0$ in the whole $\R^n$.

Let us justify in detail the change in the order of integration in \eqref{order-integration}.
First, observe that the growth control of $v$ \eqref{gctrlv} implies that  $\int_{\R^n} |v(x-z)|  \,|L \eta(x,t)| dx \le C (1+ |z|)^{\beta}$, with $C$ depending on $\eta$ and on the constant in the growth control.
Therefore,
\[\int_{0}^1 \int_{\R^n} t^{-\frac{n}{2s}}p(1, zt^{-\frac{1}{2s}}) \int_{\R^n} |v(x-z)| \,|L \eta(x,t)| dx dz dt \le C\int_{\R^n} p(1, z)(1+|z|)^{\beta}dz < \infty.\]
Hence, we can use Fubini in \eqref{order-integration} to change the order of the integrals, as desired.
Thus, \eqref{cosa-v} is proved.

Let us now show that
\begin{equation}\label{holder-v}
[v]_{C^\gamma(B_1)}\le C
\end{equation}
for some $\gamma>0$ and $C$ depending only on $n$, $\lambda$, $\Lambda$, and $\beta$.

Indeed, given $x,x'\in B_1$ with $x\neq x'$, we have
\[
\begin{split}
|v(x)-&v(x')| = \left| p(1,\cdot)\ast v(x) - p(1,\cdot)\ast v(x') \right| = \left| \int_{\R^n} \bigl(p(x-y)-p(x'-y) \bigr)v(y) dy   \right|
\\
&\le   \left| \int_{|y|\le M} \bigl(p(x-y)-p(x'-y) \bigr)v(y) dy   \right| +  2\sup_{x\in B_1} \left| \int_{|y|\ge M} p(x-y)v(y) dy   \right|.
\end{split}
\]
To bound the first term in the right hand side of the inequality, we use Proposition~\ref{heat-kernel-decay} (b) and also \eqref{gctrlv} to find
\[\left| \int_{|y|\le M} \bigl(p(x-y)-p(x'-y) \bigr)v(y) dy   \right|\leq CM^{n+\beta}|x-x'|.\]
To bound the second term, we use Proposition \ref{heat-kernel-decay} (a), with $\delta>0$ such that $2\delta=2s-\beta$.
Using also  \eqref{gctrlv}, we find that
\[\left| \int_{|y|\ge M} p(x-y)v(y) dy   \right|\leq \int_{|y|\ge M} p(x-y)(1+|x|)^{2s-\delta} \frac{|v(y)|}{(1+|x|)^{\beta+\delta}} dy\leq CM^{-\delta}.\]
Thus, we have proved
\[|v(x)-v(x')|\leq  CM^{n+\beta}|x-x'|+ CM^{-\delta}.\]
Since this can be done for any $M>0$, we may choose
\[M=|x-x'|^{-\gamma/\delta}, \qquad \textrm{with}\ \, 1-(n+\beta)\gamma/\delta = \gamma.\]
Then, we have
\[|v(x)-v(x')|\leq C|x-x'|^\gamma,\]
and $\gamma>0$.

This shows \eqref{holder-v}.
Equivalently, what we have proved can be written as
\[ [u]_{C^\gamma(B_\rho)} \le C\rho^{\beta-\gamma}\qquad\textrm{for all}\ \rho \ge 1.\]

Next we consider the incremental quotient
\[ u_h^\gamma = \frac{u(\cdot+h)-u}{|h|^\gamma} \]
which grows (by the last inequality) as $\|u_{h}^\gamma\|_{L^\infty(B_R)}\le CR^{\beta-\gamma}$.
Then we can repeat the previous argument with $v$ replaced by $u_h^{\gamma}$ and $\beta$ replaced by $\beta-\gamma$
to show that $[u_h^\gamma]_{C^\gamma(B_R)} \le C R^{\beta-2\gamma}$, and thus
\[ [u]_{C^{2\gamma}(B_R)} \le CR^{\beta-2\gamma}.\]

We keep iterating in this way until after $N$ steps we find
\[ [u]_{C^{N\gamma}(B_R)} \le CR^{\beta-N\gamma}.\]
Taking $N$ the least integer such that $\beta -N\gamma<0$ and sending $R\rightarrow +\infty$, we obtain
\[ [u]_{C^{N\gamma}(\R^n)} =0,\]
and this implies that $u$ is a polynomial of degree at most $\lfloor \beta\rfloor$.
\end{proof}

Finally, we give a consequence of Theorem \ref{Liouv-entire} that will be also needed in the proof of Theorem \ref{thm-interior-ball}.

\begin{cor}\label{Liouv-entire-2}
Let $s\in(0,1)$, $\alpha\in(0,1)$, and $L$ be any operator of the form \eqref{operator-L}-\eqref{ellipt-const}.
Let $u$ be any function satisfying, in the weak sense,
\[L[u(\cdot+h)-u(\cdot)]=0\quad \textrm{in}\ \R^n,\qquad \textrm{for all}\ h\in\R^n.\]
Assume that $u$ satisfies the growth condition
\[[u]_{C^\alpha(B_R)}\leq CR^{\beta}\qquad \textrm{for all}\ R\geq1,\]
for some $\beta<2s$.

Then, $u$ is a polynomial of degree at most $\lfloor \beta+\alpha\rfloor$.
\end{cor}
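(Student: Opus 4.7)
The plan is to apply Theorem \ref{Liouv-entire} to the first differences of $u$, then exploit finite-difference calculus to show $u$ is a polynomial, and finally use the growth of $u$ itself to pin down the correct degree.

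First I would fix $h\in\R^n$ and set $w_h(x)=u(x+h)-u(x)$. By hypothesis $Lw_h=0$ weakly in $\R^n$. For any $x\in B_R$ with $R\ge 1$, both $x$ and $x+h$ lie in $B_{R+|h|}$, so the $C^\alpha$ growth bound gives
\[
|w_h(x)|\leq [u]_{C^\alpha(B_{R+|h|})}\,|h|^\alpha \leq C(R+|h|)^{\beta}\,|h|^\alpha \leq C_h\,R^\beta
\]
for a constant $C_h$ depending on $h$ but not on $R\ge 1$. Since $\beta<2s$, Theorem \ref{Liouv-entire} applied to $w_h$ yields that $w_h$ is a polynomial in $x$ of degree at most $k:=\lfloor\beta\rfloor$, for each fixed $h$.

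Next, writing $D_h u(x) := u(x+h)-u(x)$, Step~1 says that $D_h u$ is a polynomial in $x$ of degree $\le k$ for every $h$. Since applying $D_{h'}$ to a polynomial of degree $m$ in $x$ produces a polynomial of degree $\le m-1$, iterating gives, for arbitrary $h_1,\ldots,h_{k+2}\in\R^n$,
\[
D_{h_{k+2}}D_{h_{k+1}}\cdots D_{h_1}u \;=\; D_{h_{k+2}}\cdots D_{h_2}\bigl(w_{h_1}\bigr) \;\equiv\; 0.
\]
By the classical characterization of polynomials via iterated finite differences, this forces $u$ to be a polynomial of degree at most $k+1=\lfloor\beta\rfloor+1$.

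To refine the degree bound, I would use the hypothesis one more time: for $|x|\ge 1$,
\[
|u(x)-u(0)|\leq [u]_{C^\alpha(B_{|x|})}\,|x|^\alpha \leq C|x|^{\beta+\alpha},
\]
so $|u(x)|\leq C(1+|x|^{\beta+\alpha})$. A polynomial of degree $d$ grows like $R^d$ in $B_R$ as $R\to\infty$, so this forces $d\le \beta+\alpha$, and since $d\in\mathbb{Z}$, $d\leq \lfloor\beta+\alpha\rfloor$, as claimed. The only point requiring a little care is that the growth constant $C_h$ in Step~1 depends on $h$; this is harmless because Theorem \ref{Liouv-entire}'s conclusion depends only on the exponent $\beta$, not on the prefactor. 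Other than this bookkeeping, the argument is a direct consequence of Theorem \ref{Liouv-entire} and standard finite-difference calculus, so I do not foresee any serious obstacle.
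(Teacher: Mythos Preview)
Your proof is correct and follows exactly the route the paper indicates in its one-line proof (``Apply Theorem~\ref{Liouv-entire} to incremental quotients of order~$\alpha$ of~$u$''): you apply Theorem~\ref{Liouv-entire} to each difference $w_h$, then use finite-difference calculus and the growth bound to nail down the degree. The paper leaves the passage from ``$w_h$ is a polynomial of degree $\le\lfloor\beta\rfloor$ for every $h$'' to ``$u$ is a polynomial of degree $\le\lfloor\beta+\alpha\rfloor$'' entirely to the reader, and your Steps~4--5 are precisely the standard way to fill that gap.
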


\begin{proof}
Apply Theorem \ref{Liouv-entire} to incremental quotients of order $\alpha$ of~$u$.
\end{proof}

\section{Interior regularity}
\label{sec3}

The aim of this section is to prove Theorem \ref{thm-interior-ball}.
For it, we will use a compactness argument and the Liouville theorems established in the previous section.

We start with the following.

\begin{lem}\label{lem-subseq}
Let $s\in(0,1)$, and let $\lambda$ and $\Lambda$ be fixed positive constants.
Let $\{L_k\}_{k\geq1}$ be any sequence of operators of the form \eqref{operator-L} whose spectral measures satisfy \eqref{ellipt-const}.

Then, a subsequence of $\{L_k\}$ converges weakly to an operator $L$ of the form \eqref{operator-L} whose spectral measure satisfies \eqref{ellipt-const}.
\end{lem}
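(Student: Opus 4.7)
The plan is to identify each operator $L_k$ with its spectral measure $\mu_k$, extract a weakly convergent subsequence of measures on the compact sphere $S^{n-1}$, and then show that (i) the limit measure inherits the ellipticity bounds \eqref{ellipt-const}, and (ii) the corresponding operator is the weak limit of the $L_k$.

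First, since $\mu_k(S^{n-1})\le \Lambda$, the sequence $\{\mu_k\}$ is a bounded family of positive Radon measures on the compact space $S^{n-1}$. By Banach--Alaoglu (equivalently, by Helly's selection theorem), after passing to a subsequence we may assume $\mu_{k_j}\rightharpoonup \mu$ weakly-$*$ in the dual of $C(S^{n-1})$; that is, $\int_{S^{n-1}} f\,d\mu_{k_j}\to \int_{S^{n-1}} f\,d\mu$ for every $f\in C(S^{n-1})$. The limit $\mu$ is a nonnegative finite Borel measure on $S^{n-1}$, so it is an admissible spectral measure, and we let $L$ be the associated operator via \eqref{operator-L}.

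Next I verify \eqref{ellipt-const} for $\mu$. Testing against $f\equiv 1$ gives $\int d\mu = \lim_j \int d\mu_{k_j}\le \Lambda$, which takes care of the upper bound. For the lower bound, fix $\nu\in S^{n-1}$; the function $\theta\mapsto |\nu\cdot\theta|^{2s}$ is continuous on $S^{n-1}$, so the weak-$*$ convergence yields
\[
\int_{S^{n-1}} |\nu\cdot\theta|^{2s}\,d\mu(\theta) = \lim_{j\to\infty}\int_{S^{n-1}} |\nu\cdot\theta|^{2s}\,d\mu_{k_j}(\theta)\ge \lambda.
\]
Taking the infimum over $\nu\in S^{n-1}$ gives $\mu$ the required lower ellipticity bound.

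Finally, I establish weak convergence of the operators, interpreted as $L_{k_j}u(x)\to Lu(x)$ for every $u\in C^\infty_c(\R^n)$ and every $x\in\R^n$ (locally uniformly in $x$). For fixed $u$ and $x$, define
\[
\Phi(\theta) := \int_{-\infty}^{+\infty}\bigl(u(x+\theta r)+u(x-\theta r)-2u(x)\bigr)\frac{dr}{|r|^{1+2s}},\qquad \theta\in S^{n-1}.
\]
Smoothness of $u$ makes the integrand $O(r^2)$ near $r=0$ uniformly in $\theta$, while boundedness of $u$ controls the tail in $r$; a dominated-convergence argument then shows $\Phi\in C(S^{n-1})$. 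By the weak-$*$ convergence of $\mu_{k_j}$,
\[
L_{k_j}u(x)=\int_{S^{n-1}}\Phi(\theta)\,d\mu_{k_j}(\theta)\longrightarrow \int_{S^{n-1}}\Phi(\theta)\,d\mu(\theta)=Lu(x),
\]
which is the desired weak convergence. The only nontrivial step is the continuity of $\Phi$, which is the one place one has to control both the singularity at the origin and the behavior at infinity in $r$; but this is routine for smooth compactly supported test functions, so I do not expect any serious obstacle.
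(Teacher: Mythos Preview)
Your proof is correct and follows essentially the same approach as the paper: extract a weak-$*$ convergent subsequence of spectral measures on $S^{n-1}$, verify that the limit measure inherits \eqref{ellipt-const}, and then deduce convergence of the operators on $C^\infty_c$ test functions. The paper phrases the last step slightly differently, bounding $|2w(x)-w(x+y)-w(x-y)|\le C(|y|^2\wedge 1)$ and invoking dominated convergence to get $L_{k_m}w\to Lw$ uniformly on compact sets, but this is equivalent to your continuity-of-$\Phi$ argument.
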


\begin{proof}
Let $\{\mu_k\}_{k\geq1}$ be the spectral measures of the operators $L_k$.
Using the weak compactness of probability measures on the sphere, we find that there is a subsequence $\mu_{k_m}$ converging to a measure $\mu$ that satisfies \eqref{ellipt-const}.

Let $L$ be the operator given by \eqref{operator-L} whose spectral measure is $\mu$.
Then, we have that the subsequence $L_{k_m}$ converge weakly to $L$.
Indeed, for any test function $w\in C^\infty_c(\R^n)$  we have
\[|2w(x)-w(x+y)-w(x-y)|\leq C\bigl(|y|^2\wedge 1\bigr),\]
and thus it follows from the dominated convergence theorem that
\[L_{k_m}w\longrightarrow Lw\]
uniformly in compact sets of $\R^n$.
\end{proof}

We next establish the following result, which is the main step towards Theorem \ref{thm-interior-ball} (b).

\begin{prop}\label{claim-a}
Let $s\in(0,1)$, and let $L$ be any operator of the form \eqref{operator-L}-\eqref{ellipt-const}.
Let $\alpha\in(0,1)$ be such that $\alpha+2s$ is not an integer.
Let $\alpha'\in (0,\alpha)$ be such that $\lfloor \alpha+2s\rfloor<\alpha'+2s<\alpha+2s$ and that $\alpha<\alpha'+2s$.

Let $w$ be any $C_c^\infty(\R^n)$ satisfying $L w=f$ in $B_1$, with $f\in C^\alpha(B_1)$.
Then, we have the estimate
\begin{equation}\label{estw}
[w]_{C^{\alpha+2s}(B_{1/2})} \le C\bigl( [f]_{C^\alpha(B_1)} + \|w\|_{C^{\alpha'+2s}(\R^n)}\bigr).
\end{equation}
The constant $C$ depends only on $n$, $s$, $\alpha$, $\alpha'$, and the ellipticity constants \eqref{ellipt-const}.
\end{prop}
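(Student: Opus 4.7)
The plan is to prove \eqref{estw} by contradiction and compactness, using the Liouville-type Corollary \ref{Liouv-entire-2} as the rigidity input. Assume \eqref{estw} fails: then there exist sequences $\{L_k\}$ of operators satisfying \eqref{operator-L}--\eqref{ellipt-const} and $w_k \in C^\infty_c(\R^n)$ with $L_k w_k = f_k$ in $B_1$ such that, after rescaling,
\[[w_k]_{C^{\alpha+2s}(B_{1/2})} = 1, \qquad [f_k]_{C^\alpha(B_1)} + \|w_k\|_{C^{\alpha'+2s}(\R^n)} \to 0.\]

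Set $m := \lfloor \alpha+2s\rfloor$ and use the Campanato characterization of the $C^{\alpha+2s}$-seminorm to select, for each $k$, a point $x_k \in B_{1/2}$, a scale $\rho_k > 0$, and a polynomial $P_k$ of degree at most $m$ with $\|w_k - P_k\|_{L^\infty(B_{\rho_k}(x_k))} \geq c_0 \rho_k^{\alpha+2s}$ for a universal $c_0>0$, choosing $\rho_k$ as the smallest scale at which such a near-optimal realization occurs (a standard maximal-function argument). Since $\|w_k\|_{C^{\alpha'+2s}(\R^n)}\to 0$ and $\alpha'+2s<\alpha+2s$, the scales $\rho_k$ must tend to $0$. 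Define the blow-up
\[v_k(x):= \frac{w_k(x_k+\rho_k x)-P_k(x_k+\rho_k x)}{c_0\,\rho_k^{\alpha+2s}}.\]
The minimality in the choice of $\rho_k$ yields the growth bound $\|v_k\|_{L^\infty(B_R)} \leq CR^{\alpha+2s}$ for all $R\geq 1$, while the near-optimality gives $\|v_k\|_{L^\infty(B_1)} \geq 1$. Scaling the equation produces $L_k v_k = g_k$ on $B_{1/(2\rho_k)}$, and after absorbing the constant $f_k(x_k)$ into the ansatz one checks that $g_k \to 0$ in $C^\alpha_{\rm loc}(\R^n)$.

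By Lemma \ref{lem-subseq}, on a subsequence $L_k$ converges weakly to some $L_\infty$ of the same form. The smallness of $\|w_k\|_{C^{\alpha'+2s}(\R^n)}$ together with the scaling yields local equicontinuity of the $v_k$, and Arzel\`a--Ascoli gives a locally uniform limit $v_\infty$ weakly solving $L_\infty v_\infty = 0$ in $\R^n$ with growth $\|v_\infty\|_{L^\infty(B_R)} \leq CR^{\alpha+2s}$. Bootstrapping by interior estimates for $L_\infty$-harmonic functions (applied on dyadic annuli) upgrades this into a control $[v_\infty]_{C^\alpha(B_R)} \leq CR^{2s-\epsilon}$ for any small $\epsilon>0$; here one uses the hypothesis $\alpha<\alpha'+2s$ to ensure that the resulting H\"older exponent is at least $\alpha$. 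Corollary \ref{Liouv-entire-2} then forces $v_\infty$ to be a polynomial of degree at most $\lfloor(2s-\epsilon)+\alpha\rfloor = m$ (the assumption that $\alpha+2s$ is not an integer guarantees the floor is unchanged for small $\epsilon$). But the near-best polynomial approximation property of $P_k$ implies that the best degree-$m$ polynomial approximant of $v_\infty$ on $B_1$ is zero, contradicting $\|v_\infty\|_{L^\infty(B_1)}\geq 1$.

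The principal technical obstacle in this scheme is the handling of the polynomial $P_k$ when $m=2$, which may occur for $s>1/2$ with $\alpha+2s\geq 2$: a quadratic polynomial does not lie in the classical domain of $L_k$, so the rescaled equation for $v_k$ must be computed by first replacing $P_k$ by a smooth truncation and controlling the induced tail error uniformly in $k$. Intertwined with this is the need to keep the source term $g_k$ bounded, which requires absorbing the constant part $f_k(x_k)$ (accompanied by the potentially singular factor $\rho_k^{-\alpha}$) via an appropriate non-polynomial correction of the ansatz before the blow-up is performed. Once these bookkeeping issues are in place, the compactness-plus-Liouville scheme runs routinely.
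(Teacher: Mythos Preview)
Your overall strategy (contradiction, blow-up, compactness, Liouville) matches the paper, but there is a genuine gap in the execution. Your blow-up $v_k$ has growth $\|v_k\|_{L^\infty(B_R)}\le CR^{\alpha+2s}$, which exceeds $R^{2s}$. This is fatal in two places. First, the integral defining $L_k v_k(x)$ does not converge (the tail $\int_{|y|>1}|y|^{-1-2s}\,\delta^2 v_k(x,y)\,dy$ diverges), so the equation ``$L_k v_k=g_k$'' has no classical meaning; your proposed fix of truncating the polynomial $P_k$ introduces errors at the truncation scale that do \emph{not} vanish uniformly in $k$. Second, even granting a limit $v_\infty$ with growth $R^{\alpha+2s}$, your ``bootstrapping by interior estimates'' is circular: the needed interior estimate is essentially Theorem~\ref{thm-interior-ball}, which is what Proposition~\ref{claim-a} is being used to prove, and in any case the available Liouville theorem (Theorem~\ref{Liouv-entire}) requires growth strictly below $R^{2s}$. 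The ``non-polynomial correction'' to absorb the constant $\rho_k^{-\alpha}f_k(x_k)$ is likewise left unspecified; note that only $[f_k]_{C^\alpha}$ is small, not $\|f_k\|_{L^\infty}$, so this constant can genuinely blow up.

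The paper resolves all three issues simultaneously by a single device: it never writes an equation for $v_m$, but only for the \emph{increments} $v_m(\cdot+h)-v_m$. The key algebraic identity is that for any polynomial $p$ of degree $\le 2$ one has $\delta^2 p(x+h,y)-\delta^2 p(x,y)=0$, so the subtracted polynomial contributes nothing to $L_{k_m}\bigl(v_m(\cdot+h)-v_m\bigr)$; simultaneously, the constant $f_k(x_k)$ cancels in $f_k(\bar x+\bar h)-f_k(\bar x)$, and the increment $v_m(\cdot+h)-v_m$ has growth below $R^{2s}$, so the operator is well defined and one may pass to the limit. This feeds directly into Corollary~\ref{Liouv-entire-2}, which is formulated precisely for functions whose increments are $L$-harmonic. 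The paper also sets up the blow-up differently, normalizing via the intermediate seminorm $[\,\cdot\,]_{C^{2s+\alpha'}}$ and a monotone function $\theta(r)$, which delivers the growth control $[v_m]_{C^{2s+\alpha'}(B_R)}\le CR^{\alpha-\alpha'}$ with $\alpha-\alpha'<2s$ directly, without any bootstrapping.
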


\begin{proof}
The proof of \eqref{estw} is by contradiction.
If the statement of the proposition is false then, for each integer $k\ge 0$, there exist  $L_k$, $w_k$, and $f_{k}$ satisfying:
\begin{itemize}
\item $ L_kw_k =f_k $   in $B_{1}$;
\vspace{3pt}
\item $L_k$ is of the form \eqref{operator-L}-\eqref{ellipt-const};
\vspace{3pt}
\item $[f_k]_{C^\alpha(B_1)} +\| w_k \|_{C^{2s+{\alpha'}}(\R^n )} \le 1$ (we may  always assume this dividing $w_k$ by  the previous quantity);
\item $\|w_k\|_{C^{{2s}+\alpha}(B_{1/2})} \ge k$.
\end{itemize}
In the rest of the proof we denote
\[\nu=\lfloor \alpha+2s\rfloor.\]

Since $\nu<\alpha'+2s<\alpha+2s$ we then have
\begin{equation}\label{2k2}
\sup_k \sup_ {z\in B_{1/2}} \sup_{r>0} \ r^{{\alpha'}-\alpha}\left[ w_k \right]_{C^{{2s}+{\alpha'}}(B_{r}(z))} = +\infty.
\end{equation}

Next, we define
\[ \theta(r) := \sup_k  \sup_ {z\in  B_{1/2}}  \sup_{r'>r}  (r')^{{\alpha'}-\alpha}\,\bigl[w_k\bigr]_{C^{{2s}+{\alpha'}} \left(B_{r'}(z)\right)}.\]
The function $\theta$ is monotone nonincreasing, and we have $\theta(r)<+\infty$ for $r>0$ since we are assuming that  $\|w_k\|_{C^{{2s}+{\alpha'}}(\R^n)}\le 1$.
In addition, by \eqref{2k2} we have  $\theta(r)\rightarrow +\infty$ as $r\downarrow0$.

Now, for every positive integer $m$, by definition of $\theta(1/m)$ there exist $r'_m\ge 1/m$, $k_m$, and $z_{m} \in  B_{1/2}$, for which
\begin{equation}\label{nondeg2}
(r'_m)^{{\alpha'}-\alpha} \bigl[w_{k_m}\bigr]_{C^{{2s}+{\alpha'}} \left(B_{r'_m}(z_m)\right)} \ge \frac{1}{2}\,\theta(1/m) \ge \frac{1}{2}\,\theta(r'_m).
\end{equation}
Here we have used that $\theta$ is nonincreasing.
Note that we will have $r'_m\downarrow0$.

Let $p_{k,z,r}(\cdot\,-z)$ be the polynomial of degree less or equal than $\nu$  in the variables $(x-z)$ which best fits $u_k$ in $B_r(z)$ by least squares. That is,
\[p_{k,z,r} := {\rm arg\,min}_{p\in \mathbb P_\nu} \int_{B_r(z)} \bigl(w_k(x)-p(x-z)\bigr)^2 \,dx ,\]
where $\mathbb P_\nu$ denotes the linear space of polynomials of degree at most $\nu$ with real coefficients.
From now on in this proof we denote
\[p_m = p_{k_m, z_m,r'_m}.\]

We consider the blow up sequence
\begin{equation}\label{eqvm}
 v_m(x) = \frac{w_{k_m}(z_{m} +r'_m x)-p_{m}(r'_m x)}{(r'_m)^{{2s}+\alpha}\theta(r'_m)}.
 \end{equation}
Note that, for all $m\ge 1$ we have
\begin{equation}\label{2}
\int_{B_1(0)} v_m(x) q(x) \,dx =0\quad \mbox{for all } q\in \mathbb P_\nu.
\end{equation}
This is the optimality condition for least squares.
Note also that \eqref{nondeg2} implies the following nondegeneracy condition for all $m\geq1$:
\begin{equation}\label{nondeg35}
[v_m]_{C^{{2s}+{\alpha'}}(B_1)}\ge 1/2.
\end{equation}

Next, we can estimate
\[\begin{split}
[v_{m}]_{C^{{2s}+{\alpha'}}(B_R)} &= \frac{1}{\theta({r'_m})(r'_m)^{\alpha-{\alpha'}} } \bigl[w_{k_m} \bigr]_{C^{{2s}+{\alpha'}}\left(B_{Rr'_m}(z_m)\right)}
\\
&= \frac{R^{\alpha-{\alpha'}}}{\theta({r'_m}) (Rr'_m )^{\alpha-{\alpha'}} }\bigl[w_{k_m}\bigr]_{C^{{2s}+{\alpha'}}\left(B_{Rr'_m}(z_m)\right)} .
\end{split}
\]
Indeed, the definition of $\theta$ and its monotonicity yield the following growth control for the $C^{{2s}+{\alpha'}}$ seminorm of $v_m$
\begin{equation}\label{growthc0}
[v_{m}]_{C^{{2s}+{\alpha'}} (B_R)} \leq CR^{\alpha-{\alpha'}}\quad \textrm{for all}\ \,R\ge 1.
\end{equation}

When $R=1$, \eqref{growthc0} implies that $\|v_m- q\|_{L^\infty(B_1)}\le C$, for some $q\in \mathbb P_\nu$.
Therefore, \eqref{2} yields
\begin{equation}\label{boundedinB1}
\|v_m\|_{L^\infty(B_1)}\le C.
\end{equation}

Now, we will see that using \eqref{growthc0}-\eqref{boundedinB1} we obtain
\begin{equation}\label{growthc1}
 [v_{m}]_{C^{\gamma} (B_R)} \leq CR^{{2s}+\alpha-\gamma}\qquad \textrm{for all}\ \gamma\in[0, {2s}+{\alpha'}]
\end{equation}
Indeed, \eqref{boundedinB1} implies that for every multiindex $l$ with $|l|\le\nu$ there is some point $x_* \in B_1$  such that
\[|D^l v_m(x_*)|\le C,\qquad x_*\in B_1.\]
The existence of such $x_*$ can be shown taking some nonnegative $\eta\in C^\infty_c(B_1)$ with unit mass and observing that
\[ \left|\int \eta (x) D^l v_m (x) \,dx\right| \le C \int |D^l \eta| v_m (x)\,dx \le C.\]
Hence, using \eqref{growthc0}, for all $l$ with $|l|=\nu$ and $x\in B_R$ we have
\[  |D^l v_m (x) | \le |D^l v_m (x^*)| + C R^{\alpha-{\alpha'}} |x-x^*|^{{2s}+{\alpha'}-\nu}  \le CR^{{2s}+\alpha-\nu}.\]
Iterating the same argument one can show the corresponding estimate for $|l|= \nu-1, \, \nu-2,$ etc.
Then, once established \eqref{growthc1} for all integer $\gamma\in [0,{2s}+{\alpha'}]$, the result for all $\gamma$ follows by interpolation.
Thus, \eqref{growthc1} is proved.

We now prove the following:

\vspace{6pt}
\noindent {\em Claim I.} The sequence $v_m$ converges in $C^{{2s}+{\alpha'}/2}_{\rm loc}(\R^n)$ to a function $v\in C^{{2s}+{\alpha'}}_{\rm loc}(\R^n)$. This function $v$ satisfies the assumptions of the Liouville-type Corollary \ref{Liouv-entire-2}.
\vspace{6pt}

The $C^{{2s}+{\alpha'}/2}$ uniform convergence on compact sets of $\R^n$ of the function $v_m$ to some $v\in C^{{2s}+{\alpha'}}(\R^n)$ follows from \eqref{growthc1} and the Arzel\`a-Ascoli theorem (and the usual diagonal sequence argument).
Moreover, passing to the limit \eqref{growthc1} with $\gamma\in (\alpha,1]$ such that $\gamma\leq \alpha'+2s$, we find
\begin{equation}\label{growth-limit-fnct-v}
[v]_{C^{\gamma}(B_R)}\leq CR^{\beta}\qquad\textrm{for all}\ R\geq1,
\end{equation}
$\beta=2s+\alpha-\gamma<2s$.
Thus, $v$ satisfies the growth assumption in Corollary \ref{Liouv-entire-2}.

On the hand, each $w_k$ satisfies a $L_kw_k =f_k$ in $B_1$.
Thus,  recalling that we have $[f_k]_{C^\alpha(B_1)}\le 1$, we find that

\begin{equation}\label{uuu}
\bigl|L_k w_k(\bar x+ \bar h)- L_k w_k(\bar x)\bigr|\leq |\bar h|^\alpha\qquad
\textrm{for all}\ \bar x\in B_{1/2}(z)\ \textrm{and}\ \bar h\in B_{1/2}.
\end{equation}

Note now that, since $\nu\le 2$,
\begin{equation}\label{difpols}
\delta^2 p(x+h,y)-\delta^2 p(x,y) = 0\quad \mbox{for all }p\in \mathbb P_\nu\mbox{  and for all }x,y,h\mbox{ in }\R^n.
\end{equation}
Here, as usual, we have denoted $\delta^2\varphi(x)=\varphi(x+y)+\varphi(x-y)-2\varphi(x)$.

Next, taking into account \eqref{difpols}, we translate \eqref{uuu} from $w_{k_m}$ to $v_{m}$.
Namely, using the definition of $v_m$ in \eqref{eqvm}, and setting $\bar h= r'_m h$,  and $\bar x=z_m+ r'_mx$ in \eqref{uuu}, we obtain
\[
\frac{1}{(r'_m)^{2s}}\left| L_{k_m} \left(  (r'_m)^{{2s}+\alpha} \theta(r'_m)\left\{ v_{m}(\,\cdot\,+ h)  -v_{m}\right\}\right)(x)\right|\leq (r'_m)^\alpha  |h|^{\alpha}
\]
whenever  $|x| \le \frac{1}{2r'_m}$, and thus
\begin{equation}\label{vv}
\left| L_{k_m} \left( v_{m}(\,\cdot\,+ h)-v_{m}\right)(x)\right|\leq \frac {1}{\theta(r'_m)} \quad \mbox{whenever } |x| \le \frac{1}{2 r'_m}.
\end{equation}

By Lemma \ref{lem-subseq}, the operators $L_{k_m}$ converge weakly (up to subsequence) to an operator $L$.
Thus, passing \eqref{vv} to the limit we find that
\[L \left(  v(\,\cdot\,+ h)-v\right)=0 \quad \mbox{in all of }\R^n.\]

Notice that to be able to pass to the limit $m \to +\infty$ on the right hand side of \eqref{vv} we are using that, by \eqref{growthc1}, the functions
$v_{k_m}(\,\cdot\,+ h)-v_{k_m}$ satisfy
\[\|v_{k_m}(\,\cdot\,+ h)-v_{k_m}\|_{C^{2s+\alpha'}(B_R)}\leq C(R),\]
and also the growth control
\[\|v_{k_m}(\,\cdot\,+ h)-v_{k_m}\|_{L^\infty(B_R)}\leq CR^{2s-\epsilon}\qquad\textrm{for all}\ R\geq1,\]
for some $\epsilon>0$ (this follows from \eqref{growthc1}).

This finishes the proof of Claim.
\vspace{5pt}

We have thus proved that the limit function $v$ satisfies the assumptions of Corollary \ref{Liouv-entire-2}, and hence we conclude that $v$ is a polynomial of degree $\nu$.
On the other hand, passing \eqref{2} to the limit we obtain that $v$ is orthogonal to every polynomial of degree $\nu$ in $B_1$, and hence it must be $v\equiv 0$.
But then passing \eqref{nondeg35} to the limit we obtain that $v$ cannot be constantly zero in $B_1$; a contradiction.
\end{proof}

We can now give the:

\begin{proof}[Proof of Theorem \ref{thm-interior-ball} (b)]
Let $\nu=\lfloor \alpha+2s\rfloor$, and let $\alpha'$ be such that $\nu <\alpha'+2s$.
Such $\alpha'$ exists because $\alpha+2s$ is not an integer (by assumption).
We will deduce the theorem from Proposition \ref{claim-a}, as follows.

First, it immediately follows from Proposition \ref{claim-a} that for any $w\in C^\infty_c(\R^n)$,
\begin{equation}\label{pas1}
[w]_{C^{\alpha+2s}(B_{1/2})} \le C\bigl( [f]_{C^\alpha(B_1)} + [w]_{C^{\alpha'+2s}(B_2)}+\|w\|_{C^\alpha(\R^n)}\bigr).
\end{equation}
To prove this, take a cutoff function $\eta\in C^\infty_c(B_2)$ satisfying $\eta\equiv1$ in $B_{3/2}$, and apply the proposition to the function $\eta w$.
One finds
\[[w]_{C^{\alpha+2s}(B_{1/2})} \le C\bigl( [f]_{C^\alpha(B_1)}+[L(\eta w-w)]_{C^\alpha(B_1)}+\|w\|_{C^{\alpha'+2s}(B_2)}\bigr).\]
And since the function $\eta w-w$ vanishes in $B_{3/2}$, then we have
\begin{equation}\label{nom-h-y-t}
[L(\eta w-w)]_{C^\alpha(B_1)}\leq C[w]_{C^\alpha(\R^n)}.
\end{equation}
Thus, \eqref{pas1} follows.

We recall now the definition of the norms $\|\phi\|_{\gamma;\,U}^{(\sigma)}$; see Gilbarg-Trudinger \cite{GT}.
If $\gamma=k+\gamma'$,  with $k$ integer and $\gamma'\in (0,1]$, then
\[ [\phi]_{\gamma;U}^{(\sigma)}= \sup_{x,y\in U} \biggl(d_{x,y}^{\gamma+\sigma} \frac{|D^{k}\phi(x)-D^{k}\phi(y)|}{|x-y|^{\gamma'}}\biggr),\]
and
\[ \|\phi\|_{\gamma;U}^{(\sigma)} = \sum_{l=0}^k \sup_{x\in U} \biggl(d_x^{l+\sigma} |D^l \phi(x)|\biggr) + [\phi]_{\gamma;U}^{(\sigma)}.\]
Here, we denoted
\[d_x= \mathrm{dist}(x,\partial U) \qquad\mbox{and}\qquad d_{x,y}=\min\{d_x,d_y\}.\]

We will use next these norms.
Indeed, we can rescale \eqref{pas1} and apply it to any ball $B_\rho$ of radius $\rho>0$.
Then, dividing by $\rho^\alpha$, and taking the supremum over all the balls $B_\rho$ such that $B_{2\rho}\subset B_1$, we find
\[[w]_{\alpha+2s;B_1}^{(-\alpha)} \le C\bigl( [f]_{\alpha;B_1}^{(-\alpha+2s)} + \|w\|_{\alpha'+2s;B_1}^{(-\alpha)}+[w]_{C^\alpha(\R^n)}\bigr).\]
Thus, using that
\[\|w\|_{\gamma+2s;B_1}^{(-\alpha)}\leq \epsilon \|w\|_{\alpha+2s;B_1}^{(-\alpha)}+C(\epsilon)\|w\|_{L^\infty(B_1)}\qquad\textrm{for}\ \gamma<\alpha,\]
we deduce
\[\|w\|_{\alpha+2s;B_1}^{(-\alpha)} \le C\bigl( [f]_{\alpha;B_1}^{(-\alpha+2s)} + \|w\|_{C^\alpha(\R^n)}\bigr).\]
Moreover, since $[f]_{\alpha;B_1}^{(-\alpha+2s)}\leq \|f\|_{C^\alpha(B_1)}$,
\[\|w\|_{\alpha+2s;B_1}^{(-\alpha)} \le C\bigl( \|f\|_{C^\alpha(B_1)} + \|w\|_{C^\alpha(\R^n)}\bigr).\]
In particular, we have proved that for all $w\in C^\infty_c(\R^n)$, the following inequality holds
\[\|w\|_{C^{\alpha+2s}(B_{1/2})} \le C\bigl( \|f\|_{C^\alpha(B_1)} + \|w\|_{C^\alpha(\R^n)}\bigr).\]

Finally, by using a standard approximation argument, the result follows for any solution $u\in C^\alpha(\R^n)$, and thus we are done.
\end{proof}

We now establish the estimate with a $L^\infty$ right hand side.
As before, we prove first a preliminary result.

\begin{prop}\label{claim-a2}
Let $s\in(0,1)$, $s\neq\frac12$, and let $L$ be any operator of the form \eqref{operator-L}-\eqref{ellipt-const}.
Let $\alpha\in(0,2s)$ be such that $\lfloor 2s\rfloor<\alpha<2s$.

Let $w$ be any $C_c^\infty(\R^n)$ satisfying $L w=f$ in $B_1$, with $f\in L^\infty(B_1)$.
Then, we have the estimate
\begin{equation}\label{estw2}
[w]_{C^{2s}(B_{1/2})} \le C\bigl( \|f\|_{L^\infty(B_1)} + \|w\|_{C^{\alpha}(\R^n)}\bigr).
\end{equation}
The constant $C$ depends only on $n$, $s$, $\alpha$, and the ellipticity constants \eqref{ellipt-const}.
\end{prop}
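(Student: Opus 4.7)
The proof will closely parallel that of Proposition \ref{claim-a}, replacing the pair of scales $(2s+\alpha,\,2s+\alpha')$ by $(2s,\,\alpha)$ and using Corollary \ref{Liouv-entire-2} to provide the final rigidity. I argue by contradiction: assuming \eqref{estw2} fails, I obtain sequences $L_k$ of operators of the form \eqref{operator-L}-\eqref{ellipt-const}, $w_k\in C^\infty_c(\R^n)$, and $f_k$ with $L_k w_k=f_k$ in $B_1$, $\|f_k\|_{L^\infty(B_1)}+\|w_k\|_{C^\alpha(\R^n)}\le 1$, and $[w_k]_{C^{2s}(B_{1/2})}\ge k$. Set $\nu=\lfloor 2s\rfloor\in\{0,1\}$; the assumption $s\neq 1/2$ together with $\nu<\alpha<2s$ keeps $\alpha$ in $(\nu,\nu+1)$, so $C^{2s}$ and $C^\alpha$ are both H\"older seminorms of $D^\nu$.

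Define
\[\theta(r):=\sup_k\sup_{z\in B_{1/2}}\sup_{r'\ge r}(r')^{\alpha-2s}[w_k]_{C^\alpha(B_{r'}(z))}.\]
The bound $\|w_k\|_{C^\alpha(\R^n)}\le 1$ gives $\theta(r)\le r^{\alpha-2s}<\infty$ for $r>0$. On the other hand, if $x_k,y_k\in B_{1/2}$ (nearly) realize $[w_k]_{C^{2s}(B_{1/2})}\to\infty$ and $r_k=|x_k-y_k|$, then
\[r_k^{\alpha-2s}[w_k]_{C^\alpha(B_{r_k}(x_k))}\ge\frac{|D^\nu w_k(x_k)-D^\nu w_k(y_k)|}{r_k^{2s-\nu}}\longrightarrow\infty,\]
so $\theta(r)\uparrow+\infty$ as $r\downarrow 0$. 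I choose $r'_m\ge 1/m$, $z_m\in B_{1/2}$, $k_m$ with $(r'_m)^{\alpha-2s}[w_{k_m}]_{C^\alpha(B_{r'_m}(z_m))}\ge\theta(r'_m)/2$; this forces $r'_m\downarrow 0$. Let $p_m\in\mathbb P_\nu$ be the $L^2(B_{r'_m}(z_m))$ best-fit polynomial of degree at most $\nu$ to $w_{k_m}$, and set
\[v_m(x)=\frac{w_{k_m}(z_m+r'_m x)-p_m(r'_m x)}{(r'_m)^{2s}\theta(r'_m)}.\]
Since $L_{k_m}$ annihilates polynomials of degree $\le 1$, the scaling of $L_{k_m}$ yields $L_{k_m}v_m(x)=f_{k_m}(z_m+r'_m x)/\theta(r'_m)$, and hence $|L_{k_m}v_m|\le 1/\theta(r'_m)\to 0$ on $B_{1/(2r'_m)}$. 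The definition of $\theta$ gives the growth control $[v_m]_{C^\alpha(B_R)}\le CR^{2s-\alpha}$ for all $R\ge 1$ and the nondegeneracy $[v_m]_{C^\alpha(B_1)}\ge 1/2$, while $L^2$-orthogonality yields $\int_{B_1}v_m\,q\,dx=0$ for every $q\in\mathbb P_\nu$. Combining orthogonality to constants (and to linear functions when $\nu=1$) with the $C^\alpha$ bound on $B_1$ gives $\|v_m\|_{L^\infty(B_1)}\le C$.

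By Arzel\`a-Ascoli with a diagonal subsequence and Lemma \ref{lem-subseq}, up to a subsequence $v_m\to v$ in $C^{\alpha/2}_{\mathrm{loc}}(\R^n)$ and $L_{k_m}\rightharpoonup L$. The Fubini/weak-formulation argument used in \eqref{order-integration} and in the proof of Proposition \ref{claim-a} carries $L_{k_m}v_m\to 0$ to the limit and gives $Lv=0$ in all of $\R^n$, hence $L[v(\cdot+h)-v]=0$ for every $h$. Lower semicontinuity preserves $[v]_{C^\alpha(B_R)}\le CR^{2s-\alpha}$, so Corollary \ref{Liouv-entire-2} with $\beta=2s-\alpha<2s$ forces $v$ to be a polynomial of degree at most $\lfloor\beta+\alpha\rfloor=\lfloor 2s\rfloor=\nu$; the orthogonality then gives $v\equiv 0$. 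The main obstacle is to pass the nondegeneracy $[v_m]_{C^\alpha(B_1)}\ge 1/2$ to the contradiction $[v]_{C^\alpha(B_1)}\ge 1/2$: the $C^{\alpha/2}_{\mathrm{loc}}$ convergence is strictly weaker than $C^\alpha$, and H\"older seminorms are only lower semicontinuous in the wrong direction. I plan to select $x_m,y_m\in\overline{B_1}$ realizing the $C^\alpha$-quotient up to $o(1)$ and extract $x_m\to x_*$, $y_m\to y_*$; the case $x_*\neq y_*$ passes immediately, and the delicate case $|x_m-y_m|\to 0$ will be excluded by a uniform $C^{\alpha+\delta}_{\mathrm{loc}}$ bound on $v_m$ for some $\delta>0$, obtained by iterating the heat-kernel $C^\gamma$ estimate from the proof of Theorem \ref{Liouv-entire} on $L_{k_m}v_m\to 0$ in $L^\infty_{\mathrm{loc}}$, starting from $\|v_m\|_{L^\infty(B_1)}\le C$.
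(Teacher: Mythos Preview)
Your proof tracks the paper's almost line by line: the same monotone quantity $\theta(r)$, the same polynomial subtraction of degree $\nu=\lfloor 2s\rfloor$, the same passage to a limit $v$, and the same Liouville rigidity. There is, however, one real gap in how you pass to the limit. You write that the weak-formulation argument ``gives $Lv=0$ in all of $\R^n$, hence $L[v(\cdot+h)-v]=0$.'' But integrating $[v_m]_{C^\alpha(B_R)}\le CR^{2s-\alpha}$ only yields $\|v_m\|_{L^\infty(B_R)}\le CR^{2s}$, the \emph{critical} rate for $L$: for a test function $\phi$, the decay $|L_{k_m}\phi(x)|\le C(1+|x|)^{-1-2s}$ (which is sharp for singular spectral measures) makes $\int_{\R^n} v_m\,L_{k_m}\phi$ divergent in general, and for the limit $v\in C^\alpha$ with $\alpha<2s$ the expression $Lv(x)$ is not even defined pointwise. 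The paper avoids this by reversing your order of operations: it first passes to the increments $v_m(\cdot+h)-v_m$, bounds $\bigl|L_{k_m}\bigl(v_m(\cdot+h)-v_m\bigr)\bigr|\le 2/\theta(r'_m)$ on $B_{1/(2r'_m)}$, and only then lets $m\to\infty$, using that the increments have strictly sub-critical growth $\|v_m(\cdot+h)-v_m\|_{L^\infty(B_R)}\le C|h|^\alpha R^{2s-\alpha}$. You even cite the proof of Proposition~\ref{claim-a}, which does exactly this; just swap the two steps.

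On the nondegeneracy you are more scrupulous than the paper, which simply asserts that $[v_m]_{C^\alpha(B_1)}\ge 1/2$ survives the limit. Your plan to exclude $|x_m-y_m|\to0$ via a uniform $C^{\alpha+\delta}$ bound from the heat-kernel $C^\gamma$ estimate does not go through as stated: that estimate (proof of Theorem~\ref{Liouv-entire}) needs the equation in all of $\R^n$, whereas here $L_{k_m}v_m$ is only small on $B_{1/(2r'_m)}$, and no interior estimate is yet available since this very Proposition is the input to Theorem~\ref{thm-interior-ball}(a). A cleaner repair is to make the nondegeneracy pointwise: having fixed $k_m$, pick $\bar x_m,\bar y_m\in B_{1/2}$ nearly realizing $[w_{k_m}]_{C^{2s}(B_{1/2})}$, and blow up with $z_m=\bar x_m$ and $r'_m=|\bar x_m-\bar y_m|$; then the nondegeneracy reads $|D^\nu v_m(0)-D^\nu v_m(e_m)|\ge c$ for some unit vector $e_m$, which passes to the limit under $C^{\alpha-\epsilon}$ convergence once $\epsilon<\alpha-\nu$. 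One then checks, via the same $\theta$, that the growth control $[v_m]_{C^\alpha(B_R)}\le CR^{2s-\alpha}$ still holds.
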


\begin{proof}
We follow the steps of the proof of Proposition \ref{claim-a}.

Assume that the statement is false.
Then, for each integer $k\ge 0$, there exist $L_k$, $w_k$, and $f_{k}$ satisfying:
\begin{itemize}
\item $ L_kw_k =f_k $   in $B_{1}$;
\vspace{3pt}
\item $L_k$ is of the form \eqref{operator-L}-\eqref{ellipt-const};
\vspace{3pt}
\item $\|f_k\|_{L^\infty(B_1)} +\| w_k \|_{C^{\alpha}(\R^n )} \le 1$;
\item $\|w_k\|_{C^{2s}(B_{1/2})} \ge k$.
\end{itemize}
In the rest of the proof we denote
\[\nu=\lfloor 2s\rfloor,\qquad \beta=2s-\alpha.\]

Since $\nu<\alpha<2s$ we then have
\begin{equation}\label{2k22}
\sup_k \sup_ {z\in B_{1/2}} \sup_{r>0} \ r^{-\beta}\left[ w_k \right]_{C^{\alpha}(B_{r}(z))} = +\infty.
\end{equation}

Next, we define
\[ \theta(r) := \sup_k  \sup_ {z\in  B_{1/2}}  \sup_{r'>r}  (r')^{-\beta}\,\bigl[w_k\bigr]_{C^{\alpha} \left(B_{r'}(z)\right)}.\]
The function $\theta$ is monotone nonincreasing, and we have $\theta(r)<+\infty$ for $r>0$ since we are assuming that  $\|w_k\|_{C^{\alpha}(\R^n)}\le 1$.
In addition, by \eqref{2k22} we have  $\theta(r)\rightarrow +\infty$ as $r\downarrow0$.

Now, for every positive integer $m$, by definition of $\theta(1/m)$ there exist $r'_m\ge 1/m$, $k_m$, and $z_{m} \in  B_{1/2}$, for which
\begin{equation}\label{nondeg22}
(r'_m)^{-\beta} \bigl[w_{k_m}\bigr]_{C^{\alpha} \left(B_{r'_m}(z_m)\right)} \ge \frac{1}{2}\,\theta(1/m) \ge \frac{1}{2}\,\theta(r'_m).
\end{equation}
Here we have used that $\theta$ is nonincreasing.
Note that we will have $r'_m\downarrow0$.

As in the proof of Proposition \ref{claim-a}, we define $p_{k,z,r}(\cdot\,-z)$ as the polynomial of degree less or equal than $\nu$ in the variables $(x-z)$ which best fits $u_k$ in $B_r(z)$ by least squares, and we denote $p_m = p_{k_m, z_m,r'_m}$.

We consider the blow up sequence
\begin{equation}\label{eqvm2}
 v_m(x) = \frac{w_{k_m}(z_{m} +r'_m x)-p_{m}(r'_m x)}{(r'_m)^{\alpha+\beta}\theta(r'_m)}.
 \end{equation}
Note that, for all $m\ge 1$ we have
\begin{equation}\label{22}
\int_{B_1(0)} v_m(x) q(x) \,dx =0\quad \mbox{for all } q\in \mathbb P_\nu.
\end{equation}
Note also that \eqref{nondeg22} implies the following nondegeneracy condition for all $m\geq1$:
\begin{equation}\label{nondeg352}
[v_m]_{C^{\alpha}(B_1)}\ge 1/2.
\end{equation}

Next, as in \eqref{growthc0}, one can show that
\begin{equation}\label{growthc02}
[v_{m}]_{C^{\alpha} (B_R)} \leq CR^{\beta}\quad \textrm{for all}\ \,R\ge 1.
\end{equation}

When $R=1$, \eqref{growthc02} implies that $\|v_m- q\|_{L^\infty(B_1)}\le C$, for some $q\in \mathbb P_\nu$.
Therefore, \eqref{22} yields
\begin{equation}\label{boundedinB12}
\|v_m\|_{L^\infty(B_1)}\le C.
\end{equation}

We now prove the following:

\vspace{6pt}
\noindent {\em Claim I.} Given $\epsilon>0$ small, the sequence $v_m$ converges in $C^{\alpha-\epsilon}_{\rm loc}(\R^n)$ to a function $v\in C^{\alpha}_{\rm loc}(\R^n)$. This function $v$ satisfies the assumptions of the Liouville-type Theorem~\ref{Liouv-entire}.
\vspace{6pt}

The $C^{\alpha-\epsilon}$ uniform convergence on compact sets of $\R^n$ of the function $v_m$ to some $v\in C^{\alpha}(\R^n)$ follows from \eqref{growthc02} and the Arzel\`a-Ascoli theorem.
Moreover, passing to the limit \eqref{growthc02}, we find that
\begin{equation}\label{growth-limit-fnct-v}
[v]_{C^{\alpha}(B_R)}\leq CR^{\beta}\qquad\textrm{for all}\ R\geq1.
\end{equation}
Thus, $v$ satisfies the growth assumption in Theorem \ref{Liouv-entire}.

On the hand, each $w_k$ satisfies a $L_kw_k =f_k$ in $B_1$.
Thus,  recalling that we have $\|f_k\|_{L^\infty(B_1)}\le 1$, we find that
\begin{equation}\label{uuu2}
\bigl|L_k w_k(\bar x+ \bar h)- L_k w_k(\bar x)\bigr|\leq 2\qquad
\textrm{for all}\ \bar x\in B_{1/2}(z)\ \textrm{and}\ \bar h\in B_{1/2}.
\end{equation}

Next, as is \eqref{vv}, one can translate \eqref{uuu2} from $w_{k_m}$ to $v_{m}$.
Indeed, setting $\bar h= r'_m h$,  and $\bar x=z_m+ r'_mx$ in \eqref{uuu2}, one has
\begin{equation}\label{vv2}
\left| L_{k_m} \left( v_{m}(\,\cdot\,+ h)-v_{m}\right)(x)\right|\leq \frac {2}{\theta(r'_m)} \quad \mbox{whenever } |x| \le \frac{1}{2 r'_m}.
\end{equation}

By Lemma \ref{lem-subseq}, the operators $L_{k_m}$ converge weakly (up to subsequence) to an operator $L$.
Thus, passing \eqref{vv2} to the limit we find that
\[L \left(  v(\,\cdot\,+ h)-v\right)=0 \quad \mbox{in all of }\R^n.\]

Notice that to be able to pass to the limit $m \to +\infty$ on \eqref{vv2} we used that, by \eqref{growthc02}, the functions
$v_{k_m}(\,\cdot\,+ h)-v_{k_m}$ satisfy the growth control
\[\|v_{k_m}(\,\cdot\,+ h)-v_{k_m}\|_{L^\infty(B_R)}\leq C|h|^\alpha R^{\beta}\qquad\textrm{for all}\ R\geq1,\]
and we are also using that $L_{k_m}$ converge weakly to $L$ as $m\rightarrow\infty$.

This finishes the proof of Claim.
\vspace{5pt}

We have thus proved that the limit function $v$ satisfies the assumptions of Theorem \ref{Liouv-entire}, and hence we conclude that $v$ is a polynomial of degree $\nu$.
On the other hand, passing \eqref{22} to the limit we obtain that $v$ is orthogonal to every polynomial of degree $\nu$ in $B_1$, and hence it must be $v\equiv 0$.
But then passing \eqref{nondeg352} to the limit we obtain that $v$ cannot be constantly zero in $B_1$; a contradiction.
\end{proof}

We also have the following.

\begin{prop}\label{claim-a22}
Let $s=\frac12$, and let $L$ be any operator of the form \eqref{operator-L}-\eqref{ellipt-const}.
Let $\alpha\in(0,2)$ be such that $\lfloor 2s\rfloor<\alpha<2s$.

Let $w$ be any $C_c^\infty(\R^n)$ satisfying $L w=f$ in $B_1$, with $f\in L^\infty(B_1)$.
Then, we have the estimate
\[[w]_{C^{2s-\epsilon}(B_{1/2})} \le C\bigl( [f]_{L^\infty(B_1)} + \|w\|_{C^{\alpha}(\R^n)}\bigr).\]
The constant $C$ depends only on $n$, $s$, $\alpha$, and the ellipticity constants \eqref{ellipt-const}.
\end{prop}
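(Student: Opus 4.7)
The plan is to follow the proof of Proposition \ref{claim-a2} essentially verbatim, with the target seminorm on $B_{1/2}$ lowered from $C^{2s}$ to $C^{2s-\epsilon}$. The whole reason Proposition \ref{claim-a2} excludes $s=\tfrac12$ is that then $\lfloor 2s\rfloor=1=2s$, leaving no room for $\alpha$ in the required range $\lfloor 2s\rfloor<\alpha<2s$. Replacing $2s$ by $2s-\epsilon$ as a target solves this: I set $\nu:=\lfloor 2s-\epsilon\rfloor=0$ (for $\epsilon$ small), and the admissibility condition becomes simply $0<\alpha<2s-\epsilon$, i.e.\ $\alpha\in(0,1-\epsilon)$, which is the natural reading of the hypothesis.

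Arguing by contradiction, I produce sequences $L_k$, $w_k$, $f_k$ with $\|f_k\|_{L^\infty(B_1)}+\|w_k\|_{C^\alpha(\R^n)}\le 1$ but $\|w_k\|_{C^{2s-\epsilon}(B_{1/2})}\to\infty$, and, following Proposition \ref{claim-a2} line by line, I define
\[
\theta(r) := \sup_k \sup_{z\in B_{1/2}}\sup_{r'\ge r}(r')^{\alpha-(2s-\epsilon)}\,[w_k]_{C^\alpha(B_{r'}(z))},
\]
pick $r'_m\downarrow 0$, $z_m\in B_{1/2}$, $k_m$ saturating $\tfrac12\theta(r'_m)$, and subtract from $w_{k_m}$ its best $L^2(B_{r'_m}(z_m))$-fit $p_m$ by polynomials of degree $\le\nu=0$ (i.e.\ its mean). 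The rescaling
\[
v_m(x)=\frac{w_{k_m}(z_m+r'_m x)-p_m}{(r'_m)^{2s-\epsilon}\,\theta(r'_m)}
\]
then delivers, exactly as in the previous proof, the orthogonality $\int_{B_1}v_m\,dx=0$, the nondegeneracy $[v_m]_{C^\alpha(B_1)}\ge\tfrac12$, the growth $[v_m]_{C^\alpha(B_R)}\le CR^\beta$ with $\beta:=2s-\epsilon-\alpha<2s$, and the $L^\infty$-bound $\|v_m\|_{L^\infty(B_1)}\le C$.

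Arzelà--Ascoli and a diagonal extraction give a limit $v\in C^\alpha_{\mathrm{loc}}(\R^n)$ satisfying the same nondegeneracy and growth. Rewriting $L_{k_m}w_{k_m}=f_{k_m}$ in terms of incremental quotients of $v_m$ as in \eqref{uuu2}--\eqref{vv2} yields $|L_{k_m}(v_m(\cdot+h)-v_m)(x)|\le 2/\theta(r'_m)$ on balls of radius $1/(2r'_m)\to\infty$; using Lemma \ref{lem-subseq} to pass to a weakly convergent subsequence $L_{k_m}\rightharpoonup L$, and the growth of the $v_m$ to justify the convergence of the nonlocal operator, I obtain $L(v(\cdot+h)-v)=0$ in $\R^n$ for every $h\in\R^n$. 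Since $\beta+\alpha=2s-\epsilon<2s$, Corollary \ref{Liouv-entire-2} forces $v$ to be a polynomial of degree at most $\lfloor 2s-\epsilon\rfloor=0$, i.e.\ a constant. Combined with $\int_{B_1}v\,dx=0$ this gives $v\equiv 0$, contradicting $[v]_{C^\alpha(B_1)}\ge\tfrac12$.

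The only genuinely new obstacle compared with Proposition \ref{claim-a2} is the Liouville step, and this is precisely what forces the loss of $\epsilon$. With target $C^{2s}$ one would have $\beta+\alpha=2s$, and Corollary \ref{Liouv-entire-2} would then permit a nontrivial linear (degree-one) limit $v$ that the zero-mean condition alone cannot rule out, so the contradiction fails. The $\epsilon$ of slack is exactly what secures the strict inequality $\beta+\alpha<2s$ and makes the blow-up argument close at $s=\tfrac12$.
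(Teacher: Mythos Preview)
Your proposal is correct and follows exactly the modification the paper indicates: replace $\beta=2s-\alpha$ by $\beta=2s-\epsilon-\alpha$ and rerun the blow-up argument of Proposition~\ref{claim-a2}, now with $\nu=\lfloor 2s-\epsilon\rfloor=0$. Your identification of the obstruction at $s=\tfrac12$ (no admissible $\alpha$ with $\lfloor 2s\rfloor<\alpha<2s$, and $\lfloor\alpha+\beta\rfloor=1$ would allow a linear limit not killed by the mean-zero condition) is precisely the reason for the $\epsilon$-loss, and your invocation of Corollary~\ref{Liouv-entire-2} at the Liouville step is the right tool.
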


\begin{proof}
The proof is minor modification of the one in Proposition \ref{claim-a2}.
One only has to take $\beta=2s-\alpha-\epsilon$ instead of $\beta=2s-\alpha$, and follow the same steps as in Proposition \ref{claim-a2}.
\end{proof}

Finally, we can give the:

\begin{proof}[Proof of Theorem \ref{thm-interior-ball} (a)]
We prove only the case $s\neq\frac12$, the case $s=\frac12$ follows with exactly the same argument.

By Proposition \ref{claim-a2}, for all $w\in C^\infty_c(\R^n)$ we have the estimate
\[[w]_{C^{2s}(B_{1/2})} \le C\bigl( \|f\|_{L^\infty(B_1)} + \|w\|_{C^{\alpha}(\R^n)}\bigr),\]
where $\alpha$ is such that $\lfloor 2s\rfloor < \alpha < 2s$.

Then, multiplying $w$ by a cutoff function, it immediately follows that
\begin{equation}\label{pas12}
[w]_{C^{2s}(B_{1/2})} \le C\bigl( \|f\|_{L^\infty(B_1)} + \|w\|_{C^{\alpha}(B_2)}+\|w\|_{L^\infty(\R^n)}\bigr);
\end{equation}
see the proof of Theorem \ref{thm-interior-ball} (b) above.

Now, using the norms $\|\phi\|_{\gamma;\,U}^{(\sigma)}$ defined before, we can rescale \eqref{pas12} and apply it to any ball $B_\rho$ of radius $\rho>0$.
Then, taking the supremum over all the balls $B_\rho$ such that $B_{2\rho}\subset B_1$, we find
\[[w]_{2s;B_1}^{(0)} \le C\bigl( \|f\|_{0;B_1}^{(2s)} + \|w\|_{\alpha;B_1}^{(0)}+\|w\|_{L^\infty(\R^n)}\bigr).\]
Thus, we deduce
\[\|w\|_{2s;B_1}^{(0)} \le C\bigl( \|f\|_{0;B_1}^{(2s)} + \|w\|_{L^\infty(\R^n)}\bigr).\]
In particular, for all $w\in C^\infty_c(\R^n)$, the following inequality holds
\[\|w\|_{C^{2s}(B_{1/2})} \le C\bigl( \|f\|_{L^\infty(B_1)} + \|w\|_{L^\infty(\R^n)}\bigr).\]

Finally, by using a standard approximation argument, the result follows.
\end{proof}

To end this section, we give an immediate consequence of Theorem \ref{thm-interior-ball}.
Notice that here we assume some regularity on the spectral measure $a$, but the ellipticity constants are the same as before.
In particular, we are not assuming positivity of $a$ in all of $S^{n-1}$.

\begin{cor}\label{cor-interior-ball}
Let $s\in(0,1)$, $L$ be given by \eqref{operator-L2}, and assume that
\[a\in C^\alpha(S^{n-1}).\]
Let $u$ be a solution of \eqref{eq-ball}.
Then, if $f\in C^\alpha(B_1)$ and $u\in L^\infty(\R^n)$,
\[\|u\|_{C^{\alpha+2s}(B_{1/2})}\leq C\left(\|u\|_{L^\infty(\R^n)}+\|f\|_{C^\alpha(B_1)}\right)\]
whenever $\alpha+2s$ is not an integer.

The constant $C$ depends only on $n$, $s$, ellipticity constants \eqref{ellipt-const}, and $\|a\|_{C^\alpha(S^{n-1})}$.
\end{cor}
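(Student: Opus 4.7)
The plan is to reduce Corollary \ref{cor-interior-ball} to Theorem \ref{thm-interior-ball}(b), using the $C^\alpha$-regularity of $a$ to replace the global $C^\alpha(\R^n)$ hypothesis on $u$ in part (b) by a mere $L^\infty(\R^n)$ assumption. First I would apply Theorem \ref{thm-interior-ball}(a) on balls compactly contained in $B_1$ (legitimate since $L$ is translation invariant and $Lu=f$ in $B_1$) to obtain
\[
\|u\|_{C^{2s'}(B_{7/8})} \leq C\bigl(\|u\|_{L^\infty(\R^n)} + \|f\|_{L^\infty(B_1)}\bigr),
\]
where $2s' = 2s$ if $s \neq 1/2$ and $2s' = 2s - \delta$ (for any small $\delta>0$) if $s = 1/2$. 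Then I fix a cutoff $\eta \in C^\infty_c(B_{7/8})$ with $\eta \equiv 1$ on $B_{3/4}$ and split $u = u_1 + u_2$, with $u_1 := \eta u$ and $u_2 := (1-\eta)u$, so that $u_1 \in C^{2s'}(\R^n)$ has compact support and $u_2 \in L^\infty(\R^n)$ vanishes on $B_{3/4}$.

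The heart of the argument is the estimate
\[
\|Lu_2\|_{C^\alpha(B_{1/2})} \leq C\,\|a\|_{C^\alpha(S^{n-1})}\,\|u\|_{L^\infty(\R^n)}.
\]
For $x \in B_{1/2}$ we have $u_2(x)=0$, and the integrand in $Lu_2(x)$ is nonzero only when $|y|\geq 1/4$. Changing variables and using the symmetry $a(-\theta)=a(\theta)$,
\[
Lu_2(x) = 2\int_{\R^n} u_2(z)\,K(x-z)\,dz, \qquad K(w) := \frac{a(w/|w|)}{|w|^{n+2s}}.
\]
Since $u_2$ vanishes on $B_{3/4}$, only the region $|x-z| \geq 1/4$ contributes. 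On this region the kernel $K$ has H\"older-in-$w$ norm controlled by $\|a\|_{C^\alpha(S^{n-1})}$ (because $w\mapsto w/|w|$ and $w\mapsto|w|^{-n-2s}$ are both $C^\infty$ on $\R^n\setminus\{0\}$), and the $|w|^{-n-2s}$ decay makes it $L^1$ at infinity. Estimating H\"older increments of $K(x-z)$ in $x$ under the integral sign then yields the claimed bound.

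On $B_{1/2}$ we therefore have $Lu_1 = f - Lu_2 \in C^\alpha(B_{1/2})$. When $\alpha \leq 2s'$, the inclusion $u_1 \in C^{2s'}(\R^n) \subset C^\alpha(\R^n)$ lets me apply Theorem \ref{thm-interior-ball}(b) to $u_1$ (after a standard rescaling and covering over $B_{1/2}$) to conclude $u_1 \in C^{\alpha+2s}(B_{1/2})$ with the desired bound; since $u=u_1$ on $B_{1/2}$, this gives the corollary. When $\alpha > 2s'$, I bootstrap: running the same scheme with an auxiliary exponent $\alpha_0 \in (0,2s')$ upgrades $u$ to $C^{\alpha_0+2s}_{\mathrm{loc}}(B_1)$; replacing $2s'$ by this improved regularity in the first step and iterating finitely many times eventually yields interior regularity above $\alpha$, at which point a final application of part (b) with the true exponent $\alpha$ closes the argument.

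The main obstacle will be the quantitative $C^\alpha$ bound on $Lu_2$: one must check carefully that the H\"older-in-$x$ modulus of $K(x-z)$ on $\{|x-z|\geq 1/4\}$ is genuinely controlled by $\|a\|_{C^\alpha(S^{n-1})}$ and combine it with the tail decay of $K$ at infinity so that the final constant depends only on $n$, $s$, the ellipticity constants \eqref{ellipt-const}, and $\|a\|_{C^\alpha(S^{n-1})}$. Once this tail estimate is in place, the rescaling/covering that produces the $C^{\alpha+2s}$ bound on the full ball $B_{1/2}$ and the bootstrap for $\alpha > 2s'$ are routine.
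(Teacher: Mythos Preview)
Your approach is correct, and the key observation is the same as the paper's: when $a\in C^\alpha(S^{n-1})$, the ``tail'' contribution $L\bigl((1-\eta)u\bigr)$ belongs to $C^\alpha$ near the origin with a bound depending only on $\|u\|_{L^\infty(\R^n)}$ and $\|a\|_{C^\alpha(S^{n-1})}$. The difference lies in how this observation is deployed. The paper does not use Theorem~\ref{thm-interior-ball}(a) or any bootstrap at all; it simply revisits the \emph{proof} of Theorem~\ref{thm-interior-ball}(b) and replaces the single estimate \eqref{nom-h-y-t}, namely $[L(\eta w-w)]_{C^\alpha(B_1)}\leq C[w]_{C^\alpha(\R^n)}$, by the stronger $[L(\eta w-w)]_{C^\alpha(B_1)}\leq C\|w\|_{L^\infty(\R^n)}$. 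After this one-line change, the weighted-norm interpolation argument already absorbs the interior $C^{\alpha'+2s}$ term, so no preliminary regularity for $u$ and no iteration are needed.

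Your route, by contrast, treats Theorems~\ref{thm-interior-ball}(a) and (b) as black boxes: first (a) to gain $C^{2s'}$ regularity for the cut-off $u_1$, then (b) applied to $u_1$ once $u_1\in C^\alpha(\R^n)$. This is more modular and perfectly valid, but it forces the bootstrap when $\alpha>2s'$ and requires some care with shrinking balls and avoiding integer exponents at the intermediate steps. The paper's approach is shorter precisely because it intervenes \emph{inside} the proof rather than post hoc.
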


\begin{proof}
The proof is a minor modification of the proof of Theorem \ref{thm-interior-ball} (b).
Indeed, one only needs to replace the estimate \eqref{nom-h-y-t} therein, by the following one
\[[L(\eta w-w)]_{C^\alpha(B_1)}\leq C[w]_{L^\infty(\R^n)},\]
which follows easily using that $a\in C^\alpha(S^{n-1})$ ---recall that $\eta\equiv1$ in $B_1$ and $\eta\in C^\infty_c(B_2)$.
With this modification, the rest of the proof is exactly the same.
\end{proof}

Finally, we give an immediate consequence of Theorem \ref{thm-interior-ball} that will be used later.

\begin{cor}\label{cor-interior-ball-growth}
Let $s\in(0,1)$, and let $L$ be any operator of the form \eqref{operator-L}-\eqref{ellipt-const}.
Let $u$ be any solution of
\[L u =f\quad {in}\ B_1,\]
with $f\in L^\infty(B_1)$.
Then, for any $\epsilon>0$,
\[\|u\|_{C^{2s}(B_{1/2})}\leq C\left(\sup_{R\geq1}R^{\epsilon-2s}\|u\|_{L^\infty(B_R)}+\|f\|_{L^\infty(B_1)}\right)\quad \textrm{if}\ s\neq\frac12,\]
and
\[\|u\|_{C^{2s-\epsilon}(B_{1/2})}\leq C\left(\sup_{R\geq1}R^{\epsilon-2s}\|u\|_{L^\infty(B_R)}+\|f\|_{L^\infty(B_1)}\right)\quad \textrm{if}\ s=\frac12,\]
The constant $C$ depends only on $n$, $s$, $\epsilon$, and the ellipticity constants \eqref{ellipt-const}.
\end{cor}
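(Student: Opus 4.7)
The plan is to reduce the corollary to Theorem~\ref{thm-interior-ball}~(a) by truncating $u$ with a cutoff supported well beyond $B_1$, and absorbing the resulting nonlocal tail error into the right-hand side via the growth hypothesis. Set
\[K := \sup_{R\ge 1} R^{\epsilon-2s}\|u\|_{L^\infty(B_R)} + \|f\|_{L^\infty(B_1)},\]
and choose a cutoff $\eta\in C^\infty_c(B_2)$ with $\eta\equiv 1$ on $B_{3/2}$. Define $v := \eta u$. Then $v\in L^\infty(\R^n)$ with $\|v\|_{L^\infty(\R^n)}\le \|u\|_{L^\infty(B_2)}\le CK$, and, since $\eta\equiv 1$ on $B_{3/2}$, we have $v\equiv u$ on $B_{3/2}$, in particular on $B_{1/2}$.

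The key step is to show that, in the weak sense on $B_1$, we have $Lv = f + g$ with $\|g\|_{L^\infty(B_1)}\le CK$, where $g := L((\eta-1)u)$. Since $(\eta-1)u$ vanishes on $B_{3/2}$, for every $x\in B_1$ the value $g(x)$ is given by an absolutely convergent integral: in the polar representation \eqref{operator-L}, only $|r|\ge 1/2$ contributes. Using the growth bound $|u(y)|\le CK(1+|y|)^{2s-\epsilon}$ and the ellipticity constant $\mu(S^{n-1})\le\Lambda$, a direct radial estimate gives
\[|g(x)|\;\le\; C\Lambda K\int_{1/2}^{\infty}\frac{(1+r)^{2s-\epsilon}}{r^{1+2s}}\,dr \;\le\; \frac{CK}{\epsilon},\]
where the radial integral converges precisely because $\epsilon>0$.

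With the bound on $g$ in hand, applying Theorem~\ref{thm-interior-ball}~(a) to the bounded weak solution $v$ of $Lv=f+g$ in $B_1$ yields, for $s\neq \tfrac12$,
\[\|v\|_{C^{2s}(B_{1/2})} \;\le\; C\bigl(\|v\|_{L^\infty(\R^n)}+\|f+g\|_{L^\infty(B_1)}\bigr) \;\le\; CK,\]
and since $u\equiv v$ on $B_{1/2}$ this is the desired estimate; the case $s=\tfrac12$ is identical, replacing $C^{2s}$ by $C^{2s-\epsilon}$. The main subtlety is the tail bound on $g$: because $L$ integrates against the spectral measure $\mu$ on $S^{n-1}$ rather than a pointwise radial kernel $|y|^{-n-2s}$, one must work directly in the polar form \eqref{operator-L}, where the angular integration contributes only a factor $\Lambda$ and the hypothesis $\epsilon>0$ exactly compensates for the growth of $u$ at infinity.
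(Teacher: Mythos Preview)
Your proof is correct and follows essentially the same approach as the paper: truncate $u$ near $B_2$, apply Theorem~\ref{thm-interior-ball}(a) to the truncated function, and control the nonlocal tail using the growth hypothesis. The paper's own proof is a single sentence (``the truncated function $\tilde u=u\chi_{B_2}$ satisfies the hypotheses of Theorem~\ref{thm-interior-ball}''), so your argument simply supplies the details---in particular the explicit tail bound $\int_{1/2}^\infty (1+r)^{2s-\epsilon}r^{-1-2s}\,dr<\infty$---and uses a smooth cutoff $\eta$ in place of $\chi_{B_2}$, which is immaterial.
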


\begin{proof}
The proof follows by using that the truncated function $\tilde u=u\chi_{B_2}$ satisfies the hypotheses of Theorem \ref{thm-interior-ball}.
\end{proof}

\section{A Liouville theorem in the half space}
\label{sec5}

In this Section we prove the following Liouville-type theorem, which will be needed in the proof of Theorem \ref{thm-bdry-reg}.

\begin{thm}\label{Liouv-half}
Let $L$ be an operator of the form \eqref{operator-L}-\eqref{ellipt-const}.
Let $u$ be any weak solution of
\begin{equation}\label{eq-I-flat}
\left\{ \begin{array}{rcll}
L u &=&0&\textrm{in }\R^n_+ \\
u&=&0&\textrm{in }\R^n_-.
\end{array}\right.
\end{equation}
Assume that, for some $\beta<2s$, $u$ satisfies the growth control
\[\|u\|_{L^\infty(B_R)}\leq CR^{\beta}\quad \textrm{for all}\ R\geq1.\]
Then,
\[u(x)=K(x_n)_+^s\]
for some constant $K\in \R$.
\end{thm}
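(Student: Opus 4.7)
The plan is to mimic the structure of the proof of Theorem \ref{Liouv-entire}, but with Corollary \ref{cor-interior-ball-growth} (the interior $C^{2s-\epsilon}$ estimate with growth control) playing the role that heat kernel convolution played there, and with the reduction to one dimension playing the role that the classification of harmonic polynomials played in the entire space.

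\textbf{Step 1 (Interior Hölder control in terms of distance to the boundary).} For $x_0 \in \R^n_+$ with $d := (x_0)_n > 0$, the ball $B_{d/2}(x_0)$ is contained in $\R^n_+$. Rescaling $\tilde u(y) := u(x_0 + \tfrac{d}{2} y)$ and applying Corollary \ref{cor-interior-ball-growth} (which uses only the equation on $B_1$ and the growth of $u$), I obtain for every small $\epsilon>0$ an estimate of the form
\[
[u]_{C^{2s-\epsilon}\bigl(B_{d/4}(x_0)\bigr)} \le C\, d^{-(2s-\epsilon)} (1+|x_0|)^\beta,
\]
since the growth of $u$ on balls centered at $x_0$ is controlled by $(1+|x_0|+R)^\beta$. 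This is the substitute for ``$[v]_{C^\gamma(B_1)}\le C$'' in the proof of Theorem \ref{Liouv-entire}.

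\textbf{Step 2 (Iterating tangential incremental quotients).} For any tangential vector $h=(h',0)$ with $h'\in\R^{n-1}$, the function $w_h(x):=u(x+h)-u(x)$ satisfies $Lw_h=0$ in $\R^n_+$, $w_h\equiv 0$ in $\R^n_-$, and inherits the same polynomial growth as $u$. Using Step 1 directly, for $|h|\le d/4$ I get $|w_h(x_0)| \le C (1+|x_0|)^\beta d^{-(2s-\epsilon)} |h|^{2s-\epsilon}$; combining this interior decay with the fact that $w_h$ vanishes identically on $\R^n_-$, I will derive an improved global growth bound for $w_h$ (one power of $|h|$ traded against one power of the growth exponent), exactly as in the final iterative paragraph of the proof of Theorem \ref{Liouv-entire}. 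Since $w_h$ again satisfies the hypotheses of the theorem (in particular $w_h\equiv 0$ in $\R^n_-$), I can repeat this step inductively on higher-order tangential difference quotients of $u$. After finitely many iterations the growth exponent drops below zero, forcing some tangential difference quotient of $u$ to vanish identically, so that $u$ depends only on $x_n$.

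\textbf{Step 3 (Reduction to a one-dimensional problem).} Once $u(x)=\phi(x_n)$, a polar change of variables in \eqref{operator-L} collapses $L$ to
\[
Lu(x) \;=\; A\cdot \bar L\,\phi(x_n), \qquad A:=\int_{S^{n-1}}|\theta_n|^{2s}\,d\mu(\theta),
\]
where $\bar L$ is (a positive multiple of) the one-dimensional fractional Laplacian of order $s$. The ellipticity assumption \eqref{ellipt-const} applied with $\nu=e_n$ gives $A\ge \lambda>0$, so $\phi$ solves $(-\partial_{tt})^s\phi = 0$ in $(0,\infty)$, $\phi\equiv 0$ in $(-\infty,0]$, with $|\phi(t)|\le C|t|^\beta$ and $\beta<2s$.

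\textbf{Step 4 (1D classification).} Finally, the classical half-line Liouville theorem for the one-dimensional fractional Laplacian yields $\phi(t)=K t_+^s$, finishing the proof. (Alternatively one can prove this last step ad hoc by yet another blow-up combined with Theorem \ref{Liouv-entire} applied to tangential differences in the trivial $n=1$ setting.)

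\textbf{Main obstacle.} The delicate step is Step 2. Unlike the entire-space case, the convolution identity $v = p(1,\cdot)\ast v$ is no longer available because of the Dirichlet obstruction on $\R^n_-$. One must replace it by Corollary \ref{cor-interior-ball-growth} applied at every interior point and combine the interior Hölder decay with the vanishing of $w_h$ on $\R^n_-$ to close the iteration --- in particular, the tradeoff between small $|h|$ (where interior regularity dominates) and large $|h|$ (where one uses the global growth of $u$) has to be tuned so that each iteration strictly improves the tangential growth exponent by a definite amount.
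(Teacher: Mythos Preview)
Your overall strategy---show that $u$ is tangentially constant, then reduce to the one-dimensional half-line problem---is exactly the paper's, and Steps 3 and 4 match the paper essentially verbatim. The gap is in Step~2, which you have correctly flagged as the delicate point but have not actually closed.

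Interior estimates alone, via Corollary~\ref{cor-interior-ball-growth}, give you
\[
|w_h(x_0)| \le C(1+|x_0|)^\beta \bigl(|h|/d\bigr)^{2s-\epsilon}
\]
only when $|h| \lesssim d=(x_0)_n$. For points $x_0$ in the strip $\{0<x_n\lesssim |h|\}$ you have nothing better than the trivial bound $|w_h(x_0)| \le 2\|u\|_{L^\infty(B_{R+|h|})} \le CR^\beta$, and the vanishing of $w_h$ on $\R^n_-$ is of no help there because $x_0\in\R^n_+$. Thus $\|w_h\|_{L^\infty(B_R)}$ stays of order $R^\beta$ and the iteration stalls. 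No tuning of $|h|$ against the growth exponent fixes this: for each fixed $h$, the bad strip $\{0<x_n\lesssim |h|\}$ persists at every scale $R$.

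What is missing is precisely a boundary estimate: a $C^s$ bound for $u$ that is uniform up to $\{x_n=0\}$. This is Proposition~\ref{prop-Cs-2} in the paper, and it is \emph{not} a consequence of interior regularity---it is obtained from the supersolution barrier of Lemma~\ref{supersol}. Once one has $[u]_{C^s(\overline{B_R})}\le CR^{\beta-s}$ (uniformly across the boundary strip), the tangential increment satisfies $\|w_h\|_{L^\infty(B_R)}\le C|h|^s R^{\beta-s}$ for all $x_0\in B_R$, and one more application of the same boundary $C^s$ estimate to $w_h$ drops the exponent to $\beta-2s<0$, forcing $w_h\equiv 0$. That is exactly how the paper closes the loop; your scheme becomes correct as soon as you replace the purely interior estimate in Steps~1--2 by this barrier-based $C^s$-up-to-the-boundary estimate.
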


Notice that Theorem \ref{Liouv-half} is related to Theorem 1.4 in \cite{RS-K}.
However, the proofs of the two results are quite different.
Indeed, in \cite{RS-K} we first used a method of Caffarelli to obtain a H\"older estimate for $u/d^s$ up to the boundary, and then we iterated this estimate to show the Liouville theorem.
Here, instead, we only use estimates for $u$ (and not for $u/d^s$) to establish Theorem \ref{Liouv-half}.

Recall that in the present context we can not use the method of Caffarelli (that we adapted to nonlocal equations in \cite{RS-K,RS-Dir}), because the operators \eqref{operator-L}-\eqref{ellipt-const} do not satisfy a Harnack inequality.

\subsection{Barriers}
\label{sec-barriers}

We next construct supersolutions and subsolutions that are needed in our analysis.
We will need them both in the proofs of the Liouville Theorem \ref{Liouv-half} and of Theorem \ref{thm-bdry-reg}.

These barriers are essentially the same as the ones constructed in our work \cite{RS-K}, however the proofs must be redone so that the ellipticity constants are \eqref{ellipt-const}.

Before constructing the sub and supersolution, we give two preliminary lemmas.
These are the analogues of Lemmas 3.1 and 3.2 in \cite{RS-K}.

\begin{lem}\label{lem11}
Let $s\in(0,1)$, and let $L$ be given by \eqref{operator-L}-\eqref{ellipt-const}.
Let
 \[\varphi^{(1)}(x) = \bigl({\rm dist}(x,B_1)\bigr)^s\quad\mbox{and}\quad \varphi^{(2)}(x) = \bigl({\rm dist}(x,\R^n\setminus B_1)\bigr)^s.\]
 Then,
\begin{equation}\label{eqvarphi1}
0 \le  L \varphi^{(1)}(x) \le L \varphi^{(1)}(x) \le  C\left\{1+\bigl|\log(|x|-1)\bigr|\right\}  \quad \mbox{in } B_{2}\setminus B_1.
\end{equation}
and
\begin{equation}\label{eqvarphi2}
0\ge L \varphi^{(2)}(x)\ge L \varphi^{(2)}(x) \ge   -C\left\{1+\bigl|\log(1-|x|)\bigr|\right\} \ \mbox{in }B_{1}\setminus B_{1/2}.
\end{equation}
The constant $C$ depends only on $s$, $n$, and the ellipticity constants \eqref{ellipt-const}.
\end{lem}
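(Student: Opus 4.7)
The plan is to prove both bounds by comparison with an explicit tangent half-space barrier that is annihilated by $L$. The key preliminary fact is that for every unit vector $e$ the function $h_e(y) := (y\cdot e)_+^s$ satisfies $L h_e \equiv 0$ in the open half-space $\{y \cdot e > 0\}$. Indeed, in the representation
\[
L h_e(x) = \int_{S^{n-1}} d\mu(\theta) \int_\R \bigl(h_e(x+r\theta) + h_e(x-r\theta) - 2h_e(x)\bigr)\, |r|^{-1-2s}\, dr,
\]
the substitution $u = |\theta\cdot e|\, r$ reduces each inner integral to $|\theta \cdot e|^{2s}$ times the classical 1D identity
\[
\int_{\R} \bigl((t_0+u)_+^s + (t_0-u)_+^s - 2t_0^s\bigr) \, |u|^{-1-2s}\, du = 0 \qquad (t_0 > 0),
\]
and is trivially $0$ when $\theta\cdot e = 0$. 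Integration over $\theta$ gives $L h_e \equiv 0$.

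For \eqref{eqvarphi1}, fix $x \in B_2 \setminus B_1$, set $\bar x := x/|x|$ and $\delta := |x|-1$, and introduce the tangent half-space barrier $g(y) := (y\cdot \bar x - 1)_+^s$. Since $x\cdot \bar x = |x| > 1$, the preliminary fact gives $Lg(x) = 0$. The trivial inequality $y \cdot \bar x \le |y|$ combined with monotonicity of $t\mapsto t_+^s$ shows $g \le \varphi^{(1)}$ on $\R^n$, with equality at $y = x$. Hence $\psi := \varphi^{(1)} - g$ is non-negative with $\psi(x) = 0$, and
\[
L\varphi^{(1)}(x) \;=\; L\psi(x) \;=\; \int_{S^{n-1}} d\mu(\theta) \int_{\R} \frac{\psi(x+r\theta)+\psi(x-r\theta)}{|r|^{1+2s}}\, dr \;\ge\; 0,
\]
proving the lower bound. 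For the upper bound I estimate $\psi$ pointwise: a Taylor expansion yields $|x+r\theta| - (x+r\theta)\cdot \bar x = O(|r|^2 |\theta_\perp|^2 / |x|)$, where $\theta_\perp$ is the component of $\theta$ perpendicular to $\bar x$, and combined with the elementary inequality $(a+b)^s - a^s \le \min(s\, a^{s-1} b,\, b^s)$ for $a,b \ge 0$ this gives
\[
\psi(x+r\theta) \;\le\; C\min\bigl(\delta^{s-1} r^2,\, r^{2s}\bigr) \qquad \textrm{for } |r| \le 1,
\]
and $\psi(y) \le C(1+|y|^s)$ globally. Integrating against $|r|^{-1-2s}$ produces $O(1)$ on $\{|r| \le \sqrt{\delta}\}$, a contribution of order $|\log \delta|$ on $\{\sqrt{\delta} \le |r| \le 1\}$, and an $O(1)$ tail on $\{|r|\ge 1\}$; integrating over $\theta$ costs at most a factor $\Lambda$, yielding $L\varphi^{(1)}(x) \le C(1+|\log\delta|)$.

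The estimate \eqref{eqvarphi2} for $\varphi^{(2)}$ follows by the symmetric comparison with $\tilde g(y) := (1 - y\cdot \bar x)_+^s$, which now majorizes $\varphi^{(2)}$ on all of $\R^n$ with equality at $x$ (since $1 - y\cdot \bar x \ge 1 - |y|$ and $L\tilde g(x) = 0$), so the signs of the bounds are reversed and the identical pointwise analysis of $\tilde g - \varphi^{(2)}$ produces the logarithmic bound. The main technical obstacle is making the pointwise estimate on $\psi$ uniform in $\theta \in S^{n-1}$; the logarithmic factor arises precisely from the tangential curvature of $\partial B_1$, which yields a mismatch of size $r^{2s}$ between $\varphi^{(1)}$ and the tangent-plane barrier $g$ at transversal distance $r$, integrated against the scale-invariant weight $|r|^{-1-2s}$ over the dyadic scales $\sqrt{\delta} \le r \le 1$.
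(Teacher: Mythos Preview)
Your approach is essentially the same as the paper's: subtract the tangent half-space barrier $g(y)=(y\cdot\bar x-1)_+^s$ (the paper uses $\psi(x)=(x_n-1)_+^s$ after a rotation), observe that the difference is nonnegative and vanishes at $x$, and then bound the difference pointwise in three regimes. The overall strategy is correct and yields the stated logarithmic estimate.

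There is, however, a small slip in your pointwise bound and in the split point. You claim $\psi(x+r\theta)\le C\min(\delta^{s-1}r^2,\,r^{2s})$ for all $|r|\le 1$, and then split at $|r|=\sqrt\delta$. The second bound $Cr^{2s}$ indeed holds throughout (it follows from $|y|-y\cdot\bar x\le Cr^2$ and the $s$-H\"older continuity of $(\cdot)_+^s$). But the first bound $C\delta^{s-1}r^2$ comes from $(a+b)^s-a^s\le s\,a^{s-1}b$ with $a=y\cdot\bar x-1$, and this requires $a\ge c\delta$; that is only guaranteed when $|r|\le c\delta$, not up to $\sqrt\delta$. Concretely, at $r\approx 2\delta$ with $\theta\cdot\bar x\approx -1/2$ one has $y\cdot\bar x-1<0$, $\psi\approx(|y|-1)_+^s\approx\delta^{2s}$, while $\delta^{s-1}r^2\approx\delta^{s+1}\ll\delta^{2s}$. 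So the min bound fails there.

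The fix is harmless: split at $|r|\sim\delta$ (as the paper does, taking $|r|\le\rho/2$), use $\delta^{s-1}r^2$ on $\{|r|\le\delta/2\}$ (giving $O(\delta^{1-s})=O(1)$) and $r^{2s}$ on $\{\delta/2\le|r|\le1\}$ (giving the logarithm). The final conclusion is unchanged.
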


\begin{proof}[Proof of Lemma \ref{lem11}]
We use the notation $x=(x',x_n)$ with $x'\in \R^{n-1}$.
To prove \eqref{eqvarphi1} let us estimate $L \varphi^{(1)}(x_\rho) $ where $x_\rho = (0,1+\rho)$ for $\rho\in(0,1)$.
To do it, we subtract the function $\psi(x)=(x_n-1)_+^s$, which satisfies $L\psi(x_\rho)=0$.
As in \cite[Lemma 3.1]{RS-K}, we have that
\[0\le \bigl(\varphi^{(1)}_1-\psi\bigr)(x_\rho+ y) \le
\begin{cases}
C \rho^{s-1} |y'|^2 \quad \mbox{for } y=(y',y_n)\in B_{\rho/2}\\
C |y'|^{2s}         \quad \mbox{for } y=(y',y_n)\in B_1\setminus B_{\rho/2} \\
C |y|^s              \quad \mbox{for } y\in \R^n \setminus B_1.
\end{cases}
\]

Therefore,
\[
\begin{split}
0&\le L\varphi^{(1)}(x_\rho) =  L\bigl(\varphi^{(1)}- \psi\bigr)(x_\rho)
\\
&=\int_{S^{n-1}}\int_{-\infty}^{+\infty} \frac{\bigl(\varphi^{(1)}_1-\psi\bigr)(x_\rho+ r\theta) + \bigl(\varphi^{(1)}_1-\psi\bigr)(x_\rho-r\theta)}{2} \frac{dr}{|r|^{1+2s}}\,d\mu(\theta)
\\
&\le  C\int_{S^{n-1}} \left(\int_{|r|<\rho/2}\frac{ \rho^{s-1}|r|^2 dr}{|r|^{1+2s}}
+\int_{\rho/2<|r|<1}\frac{|r|^{2s} dr}{|r|^{1+2s}}
+\int_{|r|>1}\frac{|r|^s dr}{|r|^{1+2s}}
\right)d\mu
\\
&\le   C\Lambda\bigl(1+|\log\rho|\bigr).
\end{split}
\]
Thus, \eqref{eqvarphi1} follows.
Finally, \eqref{eqvarphi2} follows with a similar argument.
\end{proof}

\begin{lem}\label{lem12}
Let $s\in(0,1)$, and let $L$ be given by \eqref{operator-L}-\eqref{ellipt-const}.
Let
 \[\varphi^{(3)}(x) = \bigl({\rm dist}(x,B_1)\bigr)^{3s/2}\quad\mbox{and}\quad \varphi^{(4)}(x) = \bigl({\rm dist}(x,\R^n\setminus B_1)\bigr)^{3s/2}.\]
 Then,
\begin{equation}\label{eqvarphi3}
L \varphi^{(3)}(x) \ge  c(|x|-1)^{-s/2}  \quad \mbox{for all  }x\in B_{2}\setminus B_1.
\end{equation}
and
\begin{equation}\label{eqvarphi4}
L \varphi^{(4)}(x) \ge  c(1-|x|)^{-s/2}- C  \quad \mbox{for all  }x\in B_{1}\setminus B_{1/2}.
\end{equation}
The constants $c>0$ and $C$ depend only on $n$, $s$, and the ellipticity constants \eqref{ellipt-const}.
\end{lem}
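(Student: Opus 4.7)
The plan is to compare $\varphi^{(3)}$ and $\varphi^{(4)}$ with tangent half-space barriers at the nearest boundary point $x_0\in\partial B_1$ and to reduce everything to a single one-dimensional computation. The 1D input is the scaling-invariant identity
\[
\int_{-\infty}^{+\infty}\frac{(\tau+r)_+^{3s/2}+(\tau-r)_+^{3s/2}-2\tau^{3s/2}}{|r|^{1+2s}}\,dr \;=\; c_\star\,\tau^{-s/2},\qquad \tau>0,
\]
with $c_\star>0$; positivity reflects $3s/2>s$ and can be extracted from the classical formula $(-\Delta)^s_{\mathbb R}\bigl((t)_+^\gamma\bigr)=\kappa(s,\gamma)\,t^{\gamma-2s}$, whose coefficient $\kappa(s,\gamma)$ is positive for $\gamma\in(s,2s)$. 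Combined with the substitution $r\mapsto r/|\theta\cdot\nu|$ in the inner integral, this yields the universal bound
\[
L\bigl(((\,\cdot\,-x_0)\cdot\nu)_+^{3s/2}\bigr)(x_0+\rho\nu)\;=\;c_\star\,\rho^{-s/2}\int_{S^{n-1}}|\theta\cdot\nu|^{2s}\,d\mu(\theta)\;\ge\;c_\star\,\lambda\,\rho^{-s/2}
\]
by \eqref{ellipt-const}, and the analogous bound at $x_0-\rho\nu$ for the reflected half-space power.

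For \eqref{eqvarphi3} I would set $\rho=|x|-1$, $x_0=x/|x|$, $\nu=x_0$, and $\psi(y):=((y-x_0)\cdot\nu)_+^{3s/2}$. The supporting-hyperplane property of the convex set $B_1$ at $x_0$ gives $\mathrm{dist}(y,B_1)\ge((y-x_0)\cdot\nu)_+$ with equality at $y=x$, hence $\varphi^{(3)}\ge\psi$ globally with equality at $x$. So $x$ is a global minimum of $\varphi^{(3)}-\psi$, and inspection of the integrand in \eqref{operator-L} at a minimum gives $L(\varphi^{(3)}-\psi)(x)\ge 0$. Combined with the displayed lower bound for $L\psi(x)$, this proves \eqref{eqvarphi3}.

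For \eqref{eqvarphi4} I would now set $\rho=1-|x|$, $x_0=x/|x|$, and $\tilde\psi(y):=(1-y\cdot x_0)_+^{3s/2}$. The Cauchy--Schwarz inequality $y\cdot x_0\le|y|$ yields $\tilde\psi\ge\varphi^{(4)}$ globally with equality at $x$---so this time convexity of $B_1$ runs against us and the direct comparison only gives $L\varphi^{(4)}(x)\le L\tilde\psi(x)$. The plan is instead to decompose
\[
L\varphi^{(4)}(x)=L\tilde\psi(x)-Lh(x),\qquad h:=\tilde\psi-\varphi^{(4)}\ge 0,\ \ h(x)=0,
\]
note that $L\tilde\psi(x)\ge c_\star\lambda\rho^{-s/2}$ by the same 1D reduction, and establish the uniform upper bound $Lh(x)\le C$ for $\rho\in(0,1/2)$, which delivers the $-C$ correction in \eqref{eqvarphi4}.

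The hard part will be this bound on $Lh(x)$. Using the exact identity $1-(x+w)\cdot x_0=\rho-w\cdot\nu$ and the second-order expansion
\[
1-|x+w|\;=\;\rho-w\cdot\nu-\frac{|w_\perp|^2}{2(1-\rho)}+O(|w|^3),\qquad w_\perp:=w-(w\cdot\nu)\nu,
\]
one shows that in the near regime $|w|\le\rho$ the bound $h(x+w)\le C\rho^{3s/2-1}|w|^2$ contributes $O(\rho^{1-s/2})$ to $Lh(x)$; in the far regime $|w|\ge 1$, $h(x+w)\le C(1+|w|)^{3s/2}$ is integrable against the kernel $|r|^{-1-2s}$. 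The subtle point is the intermediate range: the naive bound $h\le\tilde\psi$ alone would yield an $O(\rho^{-s/2})$ contribution, of the same order as $L\tilde\psi(x)$ itself, and here one must exploit the quadratic tangency of $\partial B_1$ with its supporting hyperplane at $x_0$. A careful ray-by-ray analysis---splitting, for each direction $\theta$ with $\kappa=\theta\cdot\nu$, the ray $\{x+r\theta\}$ into the segment where $y\in B_1$ (on which the sharp bound $h\le Cr^2\rho^{3s/2-1}$ persists via the Taylor expansion above) and the segment where $y$ has crossed $\partial B_1$ but stays in the supporting half-space $\{y\cdot x_0<1\}$, whose length is $O(\rho)$ and therefore contributes only $O(1)$ against the kernel---yields $Lh(x)\le C$ after integration in $d\mu(\theta)$ and use of $\mu(S^{n-1})\le\Lambda$. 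This completes \eqref{eqvarphi4}.
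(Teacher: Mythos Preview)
Your overall architecture is exactly that of the paper: compare $\varphi^{(3)},\varphi^{(4)}$ with the flat half-space power $(\cdot)_+^{3s/2}$, use the one-dimensional identity together with the lower ellipticity bound $\int_{S^{n-1}}|\theta\cdot\nu|^{2s}\,d\mu\ge\lambda$ to get $L\psi\ge c\rho^{-s/2}$, and control the geometric error $h=\tilde\psi-\varphi^{(4)}$. Your treatment of \eqref{eqvarphi3} is in fact slightly nicer than the paper's: the paper would in principle bound $|\varphi^{(3)}-\psi|$ pointwise in three shells (as it does for $\varphi^{(1)}$ in the preceding lemma), whereas you exploit convexity of $B_1$ to get $\varphi^{(3)}\ge\psi$ with equality at $x$, which gives $L(\varphi^{(3)}-\psi)(x)\ge 0$ for free with no error estimate needed.

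For \eqref{eqvarphi4}, however, your intermediate-range sketch does not close as written. Two specific issues. First, the claim that segment (B) (outside $B_1$ but inside the supporting half-space) has length $O(\rho)$ is false for nearly tangential directions: if $\kappa=\theta\cdot x_0$ is small, the ray exits $B_1$ at $r_1\sim\sqrt{\rho}$ but does not cross the hyperplane until $r_2=\rho/\kappa$, which is arbitrarily large. Second, your bound $h\le C\rho^{3s/2-1}r^2$ on segment (A), even when true pointwise, is too crude to integrate: for directions with $\kappa<0$ the ray stays in $B_1$ up to $r$ of order $1$, and then $\int_\rho^1 \rho^{3s/2-1}r^{1-2s}\,dr\sim\rho^{3s/2-1}$, which blows up for $s<2/3$. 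The Taylor-expansion bound $h\approx C\rho^{3s/2-1}|w_\perp|^2$ is only valid where $d$ and $\tilde d$ are both comparable to $\rho$; once $\tilde d$ has grown to order $1$ (as it does along inward rays) it no longer encodes the right scale.

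The fix---and this is what the paper does, following \cite{RS-K}---is to use in the mid-range $\rho/2<|w|<1$ a bound in terms of the \emph{tangential} displacement alone. Writing $\tilde d-d=|y|-y\cdot x_0=\frac{|y_\perp|^2}{|y|+y\cdot x_0}\le C|w_\perp|^2$, one gets $h\le(\tilde d-d)^{3s/2}\le C|w_\perp|^{3s}$ when $3s/2\le 1$ (subadditivity of $t^{3s/2}$), and $h\le C\tilde d^{3s/2-1}|w_\perp|^2\le C|w_\perp|^2$ when $3s/2>1$ (convexity, together with $\tilde d\le 2$). Either bound integrates against the kernel to $\int_{\rho/2}^1 r^{s-1}\,dr\le C$ or $\int_{\rho/2}^1 r^{1-2s}\,dr\le C$, uniformly in $\theta$, and then $\mu(S^{n-1})\le\Lambda$ finishes the job.
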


\begin{proof}
As before, we denote $x=(x',x_n)$ with $x'\in \R^{n-1}$.
To prove \eqref{eqvarphi4} let us estimate $L\varphi^{(4)} (x_\rho)$, where $x_\rho=(0,1+\rho)$ for $\rho\in(0,1)$.
To do it we subtract the function $\psi(x)= (1-x_n)_+^{3s/2}$.
It can be seen, as in \cite{RS-K}, that $\psi$ satisfies $L\psi(x_\rho)= c\rho^{-s/2}$ for some $c>0$.
We note that
\[\bigl(\varphi^{(4)}-\psi\bigr)(x_\rho) = 0\]
and, as in \cite[Lemma 3.2]{RS-K},
\[0\ge\bigl(\varphi^{(4)}-\psi\bigr)(x_\rho+ y) \ge
\begin{cases}
- C \rho^{3s/2-1} |y'|^2 \quad \mbox{for } y=(y',y_n)\in B_{\rho/2}\\
- C |y'|^{3s}         \quad \mbox{for } y=(y',y_n)\in B_1\setminus B_{\rho/2} \\
- C |y|^{3s/2}              \quad \mbox{for } y\in \R^n \setminus B_1.
\end{cases}
\]
Then, one finds that
\[
L\varphi^{(4)}(x_\rho) -c\rho^{-s/2} \ge  - C,
\]
which establishes \eqref{eqvarphi4}.
The estimate \eqref{eqvarphi3} follows similarly.
\end{proof}

Using the previous lemmas, one can now construct the sub and supersolutions that will be used in the next section.
We skip the proofs of the following two lemmas because they are exactly the same as the ones given in \cite[Lemmas 3.3 and 3.4]{RS-K}.

\begin{lem}[Supersolution]\label{supersol}
Let $s\in(0,1)$, and let $L$ be given by \eqref{operator-L}-\eqref{ellipt-const}.
There are positive constants $\epsilon$ and $C$, and a radial, bounded, continuous function $\varphi_1$ which is $C^{1,1}$ in $B_{1+\epsilon}\setminus\overline B_1$ and satisfies
\[
\begin{cases}
L\varphi_1(x) \le -1   &\mbox{ in } B_{1+\epsilon}\setminus \overline{B_1} \\
\varphi_1(x) = 0 \quad &\mbox{ in } B_1\\
\varphi_1(x) \le C\bigl(|x|-1\bigr)^s  &\mbox{ in } \R^n\setminus B_1\\
\varphi_1(x) \ge 1  &\mbox{ in } \R^n\setminus B_{1+\epsilon}
\end{cases}
\]
The constants $\epsilon$, $c$ and $C$ depend only on $n$, $s$, and ellipticity constants.
\end{lem}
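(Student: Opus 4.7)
The plan is to construct $\varphi_1$ by combining the two sharp barriers from Lemma~\ref{lem11} and Lemma~\ref{lem12}, following the approach of \cite{RS-K}. As a first ansatz I would consider the auxiliary function
\[\bar\varphi_1 := A\,\varphi^{(1)} - B\,\varphi^{(3)}\]
on the annulus $B_{1+\epsilon}\setminus\overline{B_1}$, with constants $A, B > 0$ and $\epsilon > 0$ to be chosen. Lemmas~\ref{lem11} and \ref{lem12} give at once
\[L\bar\varphi_1(x) \le A\,C\bigl(1+|\log(|x|-1)|\bigr) - B\,c\,(|x|-1)^{-s/2},\]
and since $(|x|-1)^{-s/2}$ dominates $|\log(|x|-1)|$ as $|x|\downarrow 1$, one first fixes $\epsilon$ small enough and $B/A$ large enough to guarantee $L\bar\varphi_1 \le -2$ throughout the annulus. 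Moreover, because $\varphi^{(3)} = \varphi^{(1)}(|x|-1)^{s/2} \le \epsilon^{s/2}\varphi^{(1)}$ on this annulus, the function $\bar\varphi_1$ lies between two positive multiples of $\varphi^{(1)}(x) = (|x|-1)^s$, so the decay bound $\bar\varphi_1\le C(|x|-1)^s$ comes for free.

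To obtain a bounded function defined on all of $\R^n$ with $\varphi_1 \equiv 0$ on $B_1$ and $\varphi_1 \ge 1$ outside $B_{1+\epsilon}$, I would truncate $\bar\varphi_1$ past the annulus. Pick a smooth radial cutoff $\eta\in C^\infty(\R^n)$ with $\eta\equiv 1$ on $B_{1+\epsilon/2}$ and $\eta\equiv 0$ outside $B_{1+\epsilon}$, and set
\[\varphi_1(x) := \eta(x)\,\bar\varphi_1(x) + \bigl(1-\eta(x)\bigr)\,\Phi(x),\]
where $\Phi$ is a smooth bounded radial function, equal to some constant $M\ge 1$ outside $B_{1+\epsilon}$ and glued to $\bar\varphi_1$ so that $\varphi_1$ is $C^{1,1}$ on $B_{1+\epsilon}\setminus\overline{B_1}$. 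A final rescaling normalizes so that $\varphi_1\ge 1$ outside $B_{1+\epsilon}$; the growth bound $\varphi_1\le C(|x|-1)^s$ on $\R^n\setminus B_1$ follows from the corresponding bound on $\bar\varphi_1$ together with the boundedness of $\Phi$.

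The last verification is $L\varphi_1 \le -1$ throughout the annulus. The difference $\varphi_1-\bar\varphi_1$ vanishes in $B_{1+\epsilon/2}$ and is bounded in $L^\infty(\R^n)$, so for $x\in B_{1+\epsilon/2}\setminus\overline{B_1}$ the perturbation satisfies
\[\bigl|L(\varphi_1-\bar\varphi_1)(x)\bigr| \le 2\,\|\varphi_1-\bar\varphi_1\|_{L^\infty(\R^n)}\int_{S^{n-1}}\int_{|r|\ge\epsilon/4}\frac{dr}{|r|^{1+2s}}\,d\mu(\theta) \le C(\epsilon),\]
using $\mu(S^{n-1})\le\Lambda$, a bounded quantity. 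Enlarging $B$ one last time absorbs this bounded perturbation into the divergent $-Bc(|x|-1)^{-s/2}$ contribution, yielding $L\varphi_1 \le -1$ on the annulus $B_{1+\epsilon}\setminus\overline{B_1}$. The main obstacle is exactly this last piece of bookkeeping: one must simultaneously choose $\epsilon, A, B$ and the matching profile $\Phi$ so that the nonlocal tail generated by the truncation is dominated by the strong push-down rate $(|x|-1)^{-s/2}$ from Lemma~\ref{lem12}, while preserving the $C^{1,1}$ regularity across $\partial B_{1+\epsilon/2}$ and the desired pointwise bounds on $\varphi_1$.
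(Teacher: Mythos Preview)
Your overall strategy---combining $\varphi^{(1)}$ and $\varphi^{(3)}$ from Lemmas~\ref{lem11} and~\ref{lem12} and then truncating---is exactly the approach the paper defers to \cite{RS-K}. However, there is a genuine gap in the bookkeeping of your last paragraph.

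You claim that $\varphi_1-\bar\varphi_1$ is bounded in $L^\infty(\R^n)$. This is false: the function $\bar\varphi_1=A\varphi^{(1)}-B\varphi^{(3)}$ is defined on all of $\R^n$ (otherwise $L\bar\varphi_1$ would not make sense), and it behaves like $-B|x|^{3s/2}$ at infinity, so $\varphi_1-\bar\varphi_1$ is unbounded. The integral defining $L(\varphi_1-\bar\varphi_1)(x)$ is still finite (since $3s/2<2s$), but the correct bound is of the form $C(\epsilon)\bigl(M+A+B\bigr)$, not $C(\epsilon)$. This makes your final step ``enlarge $B$ one last time'' circular: the perturbation you are trying to absorb by the term $-Bc(|x|-1)^{-s/2}$ itself grows linearly in $B$, and a short computation shows both sides scale like $\epsilon^{-s/2}$, so there is no reason the constants cooperate.

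The clean fix is to truncate \emph{before} combining: set $\tilde\varphi^{(1)}=\min(\varphi^{(1)},1)$ and $\tilde\varphi^{(3)}=\min(\varphi^{(3)},1)$, which are bounded and agree with $\varphi^{(1)},\varphi^{(3)}$ on $B_2$. Since the modification occurs only outside $B_2$, the estimates of Lemmas~\ref{lem11}--\ref{lem12} persist on $B_{3/2}\setminus B_1$ up to an additive constant that is \emph{independent} of $A,B$. Now take $\varphi_1=A\tilde\varphi^{(1)}-B\tilde\varphi^{(3)}$ directly, with (say) $B=A/2$. First choose $\epsilon$ small so that $\frac{c}{2}\rho^{-s/2}$ dominates $C(1+|\log\rho|)+C$ by at least $1$ on $(0,\epsilon)$; this gives $L\varphi_1\le -A$ there. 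Then choose $A$ large (of order $\epsilon^{-s}$) so that $\varphi_1\ge A\rho^s/2\ge 1$ on $\R^n\setminus B_{1+\epsilon}$. No further cutoff is needed, and the circularity disappears.
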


\begin{proof}
See the proof of Lemma 3.3 in \cite{RS-K}.
\end{proof}

\begin{lem}[Subsolution]\label{subsol}
Let $s\in(0,1)$, and let $L$ be given by \eqref{operator-L}-\eqref{ellipt-const}.
There is $c>0$, and a radial, bounded, continuous function $\varphi_2$ that satisfies
\[
\begin{cases}
L\varphi_2(x) \ge c  &\mbox{ in } B_1\setminus B_{1/2}\\
\varphi_2(x) = 0 \quad &\mbox{ in } \R^n\setminus B_1\\
\varphi_2(x) \ge c\bigl(1-|x|\bigr)^s  &\mbox{ in } B_1\\
\varphi_2(x) \le 1  &\mbox{ in } \overline{B_{1/2}}.
\end{cases}
\]
The constants $\epsilon$, $c$ and $C$ depend only on $n$, $s$, and ellipticity constants.
\end{lem}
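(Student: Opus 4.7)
The plan is to construct $\varphi_2$ as a combination of the two auxiliary functions built in Lemmas~\ref{lem11} and~\ref{lem12}, following the scheme of \cite[Lemma~3.4]{RS-K}. The natural ansatz is
\[
\varphi_2 = A\,\varphi^{(2)} + B\,\varphi^{(4)},
\]
with $A,B>0$ to be fixed, followed by a multiplicative normalization so that $\varphi_2\le 1$ on $\overline{B_{1/2}}$. Since both summands vanish outside $B_1$, condition~(i) is immediate. Using that $\varphi^{(2)}(x)=(\mathrm{dist}(x,\R^n\setminus B_1))^s\ge (1-|x|)^s$ in $B_1$, together with the nonnegativity of $\varphi^{(4)}$, one gets the lower bound~(ii); condition~(iii) follows from the normalization.

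The core step is the subsolution inequality~(iv). Adding the bounds of Lemmas~\ref{lem11} and~\ref{lem12}, for $x\in B_1\setminus B_{1/2}$ one has
\[
L\varphi_2(x)\ge -A\,C_1\bigl(1+|\log(1-|x|)|\bigr)+B\bigl(c_2(1-|x|)^{-s/2}-C_2\bigr).
\]
Near $\partial B_1$ the singular positive term $B\,c_2(1-|x|)^{-s/2}$ dominates the logarithm, so the right-hand side exceeds $1$ on an inner neighborhood $\{1-\delta<|x|<1\}$ of $\partial B_1$ for any fixed $A$, as soon as $B$ is chosen sufficiently large. On the complementary compact set $\{1/2<|x|\le 1-\delta\}$ the quantity $(1-|x|)^{-s/2}$ is bounded below by $\delta^{-s/2}>0$, while the contributions from $\varphi^{(2)}$ remain uniformly bounded, so by further enlarging the ratio $B/A$ one forces the positive contribution of $B\,L\varphi^{(4)}$ to dominate there too.

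The main obstacle is to make both regimes compatible with a single pair $(A,B)$; if the argument in the transition part of the annulus does not close directly, one supplements the ansatz with a third summand---either a variant $(\mathrm{dist}(x,\R^n\setminus B_1))^{\gamma}$ with an intermediate exponent $\gamma\in(s,3s/2)$ analyzed by the same subtraction trick as in Lemma~\ref{lem12}, or a nonnegative smooth bump $\tilde\eta\in C^\infty_c(B_1)$ supported in $\overline{B_{1/2}}$, which on $B_1\setminus B_{1/2}$ gives the nonnegative contribution
\[
L\tilde\eta(x)=\int_{S^{n-1}}\int_{-\infty}^{+\infty}\frac{\tilde\eta(x+r\theta)+\tilde\eta(x-r\theta)}{|r|^{1+2s}}\,dr\,d\mu(\theta)\ge 0.
\]
The detailed balancing is carried out in \cite[Lemma~3.4]{RS-K} and applies verbatim here, since Lemmas~\ref{lem11} and~\ref{lem12} of this section already provide the required kernel estimates under the weaker ellipticity~\eqref{ellipt-const}.
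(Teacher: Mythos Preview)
Your proposal is correct and takes essentially the same approach as the paper: both defer to \cite[Lemma~3.4]{RS-K}, the point being that the only ingredients needed there are the estimates of Lemmas~\ref{lem11} and~\ref{lem12}, which have already been re-proved here under the weaker ellipticity assumption~\eqref{ellipt-const}. Your added sketch of the construction $\varphi_2 = A\varphi^{(2)} + B\varphi^{(4)}$ (with a possible third corrective term) is a reasonable outline of how that balancing argument goes, though the paper itself gives no such detail and simply cites the reference.
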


\begin{proof}
See the proof of Lemma 3.4 in \cite{RS-K}.
\end{proof}

\subsection{H\"older regularity up to the boundary for $u$}

Using the interior estimates and the supersolution constructed above, we find the following.

\begin{prop}\label{prop-Cs}
Let $s\in(0,1)$, $L$ be any operator of the form \eqref{operator-L}-\eqref{ellipt-const}, and $\Omega$ be any bounded Lipschitz domain satisfying the exterior ball condition.
Let $f\in L^\infty(\Omega)$, and $u$ be a weak solution of
\begin{equation}\label{eq-domain-Cs}
\left\{ \begin{array}{rcll}
L u &=&f&\textrm{in }\Omega \\
u&=&0&\textrm{in }\R^n\setminus\Omega.
\end{array}\right.
\end{equation}
Then,
\[\|u\|_{C^s(\overline\Omega)}\leq C\|f\|_{L^\infty(\Omega)}.\]
The constant $C$ depends only on $n$, $s$, $\Omega$, and the ellipticity constants \eqref{ellipt-const}.
\end{prop}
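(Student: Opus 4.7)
The proof has two steps: a pointwise boundary decay estimate $|u(x)|\le C\|f\|_{L^\infty(\Omega)}\,d(x)^s$ via the barriers constructed in Subsection \ref{sec-barriers}, followed by an interior rescaling argument that combines this decay with Corollary \ref{cor-interior-ball-growth} to produce the full $C^s(\overline\Omega)$ estimate.

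\emph{Step 1 (boundary decay).} First I would establish $\|u\|_{L^\infty(\R^n)}\le C\|f\|_{L^\infty(\Omega)}$ using the supersolution $\varphi_1$ of Lemma \ref{supersol}. Pick $y_0\in\R^n$ with $\rho:=\mathrm{dist}(y_0,\Omega)$ so large, relative to $\mathrm{diam}(\Omega)$ and the fixed constant $\epsilon$ of Lemma \ref{supersol}, that $\Omega\subset B_{(1+\epsilon)\rho}(y_0)\setminus B_\rho(y_0)$. Then
\[
\Psi(x):=K\rho^{2s}\|f\|_{L^\infty(\Omega)}\,\varphi_1\!\left(\frac{x-y_0}{\rho}\right)
\]
is a nonnegative supersolution with $L\Psi\le -\|f\|_{L^\infty(\Omega)}$ in $\Omega$ for a suitably large $K$, and $\Psi$ vanishes in $B_\rho(y_0)\subset\Omega^c$; comparison applied to $\pm u-\Psi$ yields the claimed bound. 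Next, by the exterior ball condition each $x_0\in\partial\Omega$ admits a tangent ball $B_{\rho_0}(y_1)\subset\R^n\setminus\Omega$ with $\rho_0$ uniform, and the same construction at this smaller scale, with $K$ chosen so that the barrier dominates $\|u\|_{L^\infty}$ on the outer boundary of its annulus of supersolution, gives $|u|\le K\rho_0^{2s}\|f\|_{L^\infty}\varphi_1((\,\cdot\,-y_1)/\rho_0)$ in $\Omega\cap B_{(1+\epsilon)\rho_0}(y_1)$. Using $\varphi_1(z)\le C(|z|-1)^s$ and $|x-y_1|-\rho_0\le C\,d(x)$ for $x$ near $x_0$, this yields $|u(x)|\le C\|f\|_{L^\infty}d(x)^s$ in a uniform boundary neighborhood, and the estimate extends to all of $\Omega$ via the global $L^\infty$ bound away from $\partial\Omega$.

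\emph{Step 2 (upgrade to $C^s$).} Fix $x,y\in\overline\Omega$ with $d(x)\le d(y)$ and set $r=|x-y|$. If $r\ge d(x)/2$, then $d(y)\le d(x)+r\le 3r$, and Step 1 gives $|u(x)-u(y)|\le C\|f\|_{L^\infty}(d(x)^s+d(y)^s)\le C\|f\|_{L^\infty}r^s$. If $r<d(x)/2$, write $d:=d(x)$; the dilation $\tilde u(z):=u(x+dz)$ is defined on $B_1$ and solves $L\tilde u(z)=d^{2s}f(x+dz)$, and Step 1 provides the growth control
\[
\|\tilde u\|_{L^\infty(B_R)}\le C\|f\|_{L^\infty}d^s(1+R)^s\qquad\textrm{for all }R\ge 1.
\]
Choosing any $\epsilon\in(0,s)$ gives $\sup_{R\ge 1}R^{\epsilon-2s}\|\tilde u\|_{L^\infty(B_R)}\le C\|f\|_{L^\infty}d^s$. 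Since $d^{2s}\le Cd^s$ (as $d\le \mathrm{diam}(\Omega)$), Corollary \ref{cor-interior-ball-growth} applied to $\tilde u$ yields $\|\tilde u\|_{C^\gamma(B_{1/2})}\le C\|f\|_{L^\infty}d^s$, with $\gamma=2s$ if $s\neq 1/2$ and $\gamma=2s-\epsilon$ if $s=1/2$. In both cases $\gamma>s$, hence $[\tilde u]_{C^s(B_{1/2})}\le C\|f\|_{L^\infty}d^s$. Undoing the scaling, $[u]_{C^s(B_{d/2}(x))}=d^{-s}[\tilde u]_{C^s(B_{1/2})}\le C\|f\|_{L^\infty}$, so $|u(x)-u(y)|\le C\|f\|_{L^\infty}r^s$. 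Combining both cases with the $L^\infty$ bound from Step 1 yields $\|u\|_{C^s(\overline\Omega)}\le C\|f\|_{L^\infty(\Omega)}$.

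\emph{Main obstacle.} The quantitative heart of the argument is the tight matching between the $d^s$ decay of Step 1 and the weighted growth norm in Corollary \ref{cor-interior-ball-growth}: the weight $R^{\epsilon-2s}$ must beat the growth $(1+R)^s$ inherited from Step 1, which forces $\epsilon<s$; any larger $\epsilon$ would render the sup in $R$ infinite and destroy the scaling. A secondary technical point is that $\varphi_1$ is a supersolution only in a fixed-aspect annulus, so upgrading it to a \emph{global} barrier in $\Omega$ requires placing the exterior ball sufficiently far from $\Omega$, relative to $\mathrm{diam}(\Omega)$ and the constant $\epsilon$ of Lemma \ref{supersol}.
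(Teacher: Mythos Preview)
Your proof is correct and follows essentially the same route the paper indicates: the paper's proof simply says the argument is ``standard once one has interior estimates (given by Theorem \ref{thm-interior-ball}) and an appropriate barrier (given by Lemma \ref{supersol})'' and refers to \cite{RS-Dir} for details, which is precisely the two-step scheme (barrier $\Rightarrow$ $d^s$ decay, then rescaled interior estimate) that you carry out. Your use of Corollary \ref{cor-interior-ball-growth} in place of Theorem \ref{thm-interior-ball} is a harmless variant, since the corollary is exactly the weighted-growth reformulation needed to absorb the $d^s(1+R)^s$ bound coming from Step 1.
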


\begin{proof}
The proof of this result is quite standard once one has interior estimates (given by Theorem \ref{thm-interior-ball}) and an appropriate barrier (given by Lemma \ref{supersol}).
For more details, see the proof of Proposition 1.1 in \cite{RS-Dir}, where this was done for the case of the fractional Laplacian.
\end{proof}

We will also need the following version of the estimate.

\begin{prop}\label{prop-Cs-2}
Let $L$ be any operator of the form \eqref{operator-L}-\eqref{ellipt-const}.
Let $f\in L^\infty(B_1^+)$, and $u\in L^\infty(\R^n)$ be any bounded solution of
\begin{equation}\label{eq-567}
\left\{ \begin{array}{rcll}
L u &=&f&\textrm{in }B_1^+ \\
u&=&0&\textrm{in }B_1^-.
\end{array}\right.
\end{equation}
Then,
\[\|u\|_{C^s(\overline {B_{1/2}})}\leq C\bigl(\|f\|_{L^\infty(B_1)}+\|u\|_{L^\infty(\R^n)}\bigr).\]
\end{prop}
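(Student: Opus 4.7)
The plan is to run the scheme of Proposition \ref{prop-Cs} locally, accounting for the fact that $u$ no longer vanishes outside $B_1$ by absorbing $\|u\|_{L^\infty(\R^n)}$ into the right-hand side. Set $M:=\|f\|_{L^\infty(B_1^+)}+\|u\|_{L^\infty(\R^n)}$. The proof splits into three steps: a boundary decay estimate via barriers, an interior estimate rescaled in terms of the distance to $\{x_n=0\}$, and a matching of the two.

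\textbf{Step 1 (boundary decay).} I would first prove that
\[|u(x)|\le CM\,(x_n)_+^s\qquad\text{for all }x\in B_{3/4}.\]
At each point $z_0$ of the flat portion $B_{3/4}\cap\{x_n=0\}$, pick a tangent exterior ball $B_\rho(y_0)\subset\{x_n\le0\}$ and use a translation/rescaling of the radial supersolution $\varphi_1$ from Lemma \ref{supersol}; combining such a barrier with a large multiple of a smooth cutoff of $\R^n\setminus B_{7/8}$ produces a supersolution $\Phi$ in $B_{7/8}^+$ satisfying $L\Phi\le-M$ in $B_{7/8}^+$, $\Phi\ge|u|$ in $\R^n\setminus B_{7/8}^+$, and $\Phi\le CM(x_n)_+^s$ in $B_{3/4}$. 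The weak comparison principle (valid for our class of operators) applied to $\pm u-\Phi$ then yields the bound.

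\textbf{Step 2 (rescaled interior estimate).} For $x\in B_{1/2}^+$, set $\rho:=x_n/2$ and define $\tilde u(z):=u(x+\rho z)$, so that $L\tilde u(z)=\rho^{2s}f(x+\rho z)=:\tilde f(z)$ in $B_1$. Fix $\epsilon\in(0,s)$. Using Step 1 and the global bound $\|u\|_{L^\infty(\R^n)}\le M$ one checks that $\|\tilde u\|_{L^\infty(B_R)}\le CM(x_n+R\rho)^s$ as long as $x+\rho B_R\subset B_{3/4}$ and $\|\tilde u\|_{L^\infty(B_R)}\le M$ otherwise; a short computation (using $\epsilon<s$) then gives
\[\sup_{R\ge1}R^{\epsilon-2s}\|\tilde u\|_{L^\infty(B_R)}\le CMx_n^s,\qquad \|\tilde f\|_{L^\infty(B_1)}\le M\rho^{2s}.\]
Applying Corollary \ref{cor-interior-ball-growth} to $\tilde u$ and scaling back, for $s\neq1/2$ one obtains $[u]_{C^{2s}(B_{x_n/4}(x))}\le CMx_n^{-s}$, and a completely analogous $C^{2s-\epsilon}$ bound when $s=1/2$.

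\textbf{Step 3 (combining).} Take $x,y\in\overline{B_{1/2}}$ with (WLOG) $(x_n)_+\le(y_n)_+$. If $|x-y|\ge x_n/4$ (which includes $x\in B_{1/2}^-$, where $u(x)=0$), Step 1 together with $y_n\le x_n+|x-y|\le 5|x-y|$ gives $|u(x)-u(y)|\le CM(x_n^s+y_n^s)\le CM|x-y|^s$. If instead $|x-y|<x_n/4$, both points lie in $B_{x_n/4}(x)$ and Step 2 applies: when $2s\le1$ one uses directly the $C^{2s}$-seminorm bound together with $|x-y|^{2s-s}\le(x_n/4)^s$, while when $2s>1$ one extracts a gradient bound $\|\nabla u\|_{L^\infty(B_{x_n/4}(x))}\le CMx_n^{s-1}$ from the $C^{2s}$-norm and uses $|x-y|^{1-s}\le x_n^{1-s}$; either way $|u(x)-u(y)|\le CM|x-y|^s$. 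Combining yields $\|u\|_{C^s(\overline{B_{1/2}})}\le CM$.

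The main obstacle is Step 1: unlike in Proposition \ref{prop-Cs}, the assumption $u=0$ holds only in the open half-ball $B_1^-$, so the supersolution must simultaneously dominate $u$ on the curved portion of $\partial B_{7/8}^+$ and on all of $\R^n\setminus B_{7/8}$ (where one only has the crude bound $|u|\le M$), while still being of order $(x_n)_+^s$ near the flat boundary. Once this has been arranged, Steps 2 and 3 are essentially a careful bookkeeping of the rescaled interior estimate already contained in Corollary \ref{cor-interior-ball-growth}.
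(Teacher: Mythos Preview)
Your three-step argument is correct, but it is considerably more work than what the paper does. The paper's proof is one line: multiply $u$ by a cutoff $\eta\in C_c^\infty(B_1)$ with $\eta\equiv1$ on $B_{3/4}$, so that $\tilde u:=\eta u$ vanishes in all of $\R^n\setminus B_{3/4}^+$ and solves $L\tilde u=f+L\bigl((\eta-1)u\bigr)=:\tilde f$ in $B_{3/4}^+$ with $\|\tilde f\|_{L^\infty}\le C\bigl(\|f\|_{L^\infty}+\|u\|_{L^\infty(\R^n)}\bigr)$; then apply Proposition~\ref{prop-Cs} directly to $\tilde u$ on the bounded Lipschitz domain $B_{3/4}^+$ (which has the exterior ball condition). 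In other words, the cutoff trick instantly dissolves what you call ``the main obstacle'': once $\tilde u$ vanishes identically outside the half-ball, the barrier $\varphi_1$ of Lemma~\ref{supersol} applies verbatim with no need to superimpose a far-field term. Your Steps~2--3 are fine and in fact reconstitute the proof of Proposition~\ref{prop-Cs} (which the paper defers to \cite{RS-Dir}), so your route amounts to re-proving Proposition~\ref{prop-Cs} in this particular geometry rather than reducing to it.
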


\begin{proof}
It follows from the previous result by multiplying $u$ by a cutoff function.
\end{proof}

\subsection{Proof of Theorem \ref{Liouv-half}}

Here we prove Theorem \ref{Liouv-half}.
For it, we will need the following result, established in \cite{RS-K}.

\begin{lem}[\cite{RS-K}]\label{classification1D}
Let $u$ satisfy $(-\Delta)^s u= 0$ in $\R_+$ and $u=0$ in $\R_-$.
Assume that, for some $\beta\in (0,2s)$, $u$ satisfies the growth control $\|u\|_{L^\infty(0,R)}\le CR^\beta$ for all $R\geq1$.
Then $u(x) = K(x_+)^s$.
\end{lem}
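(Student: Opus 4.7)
The plan is a blow-up/classification argument at the boundary point $0$, together with the explicit solution $\varphi(x)=(x_+)^s$ of the half-line homogeneous problem.

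First, I would apply Proposition~\ref{prop-Cs-2} rescaled to each interval $(-R,R)$ and combine it with the growth hypothesis to obtain the quantitative estimate $[u]_{C^s([-R,R])}\le C\,R^{\beta-s}$ for all $R\ge 1$. In particular $|u(x)|\le C|x|^s$ locally near $0$, since $u(0)=0$ and $\beta<2s$.

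Next, I would consider the rescalings $u_r(x):=u(rx)/r^s$ for $r\in(0,1)$. By the $2s$-homogeneity of $(-\Delta)^s$ each $u_r$ solves the same half-line problem, and by the previous estimate the family is equicontinuous and locally uniformly bounded on $\R$. A diagonal subsequence $r_k\downarrow 0$ then yields a locally uniform limit $u_0$ still solving the same problem, and one has $u_0(\lambda x)=\lim u(r_k\lambda x)/r_k^s=\lambda^s u_0(x)$ for every $\lambda>0$, so $u_0$ is $s$-homogeneous. Since the only $s$-homogeneous function on $\R$ that vanishes on $\R_-$ and satisfies $(-\Delta)^s u=0$ in $\R_+$ is (up to a scalar) $(x_+)^s$ --- which follows from the identity $(-\Delta)^s(x_+)^\alpha=c(\alpha,s)\,x_+^{\alpha-2s}$ with $c(s,s)=0$ --- we deduce $u_0=K(x_+)^s$ for some $K\in\R$.

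To promote this to the global identity $u\equiv K(x_+)^s$, I would set $w:=u-K(x_+)^s$ and perform an analogous blow-down at infinity: $w_R(x):=w(Rx)/R^\beta$ is relatively compact by Step~1, and any subsequential limit $w_\infty$ is a $\beta$-homogeneous solution of the same half-line problem; but the classification above forces $w_\infty\equiv 0$ whenever $\beta\ne s$. Consequently $\|w\|_{L^\infty(-R,R)}=o(R^\beta)$, and iterating the argument with the improved growth exponent eventually drives it below $0$ and forces $w\equiv 0$.

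The main obstacle is the passage to the $s$-homogeneous blow-up limit: to rule out subsequences with different values of $K$, one needs a quantitative boundary H\"older estimate for $u/x^s$, not just for $u$. For the fractional Laplacian in 1D this is classical and follows from the boundary Harnack principle (which is available for $(-\Delta)^s$ but not for the general operators in \eqref{operator-L}). This is precisely why the present paper invokes the lemma as a known one-dimensional result from \cite{RS-K} rather than re-deriving it from the tools developed here.
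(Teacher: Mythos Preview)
The paper does not supply a proof of this lemma; it is quoted as a known result from \cite{RS-K} and used as a black box in the proof of Theorem~\ref{Liouv-half}. So there is no proof in the paper to compare your argument against.

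On the merits of your outline: Step~1 (the scaled $C^s$ estimate $[u]_{C^s([-R,R])}\le CR^{\beta-s}$) is correct and is exactly the mechanism the paper uses in $n\ge 2$ in the proof of Theorem~\ref{Liouv-half}. The gap you flag in your last paragraph, however, is real and already invalidates Steps~2 and~3 as written. The chain $u_0(\lambda x)=\lim_k u(r_k\lambda x)/r_k^s=\lambda^s u_0(x)$ is not justified: one has $u_{r_k}(\lambda x)=\lambda^s u_{\lambda r_k}(x)$, but $\{\lambda r_k\}$ is a \emph{different} sequence and there is no reason $u_{\lambda r_k}\to u_0$ along it. Subsequential blow-up (or blow-down) limits need not be homogeneous without an additional uniqueness input. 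The same defect undermines the blow-down of $w$, so neither the conclusion $\|w\|_{L^\infty(-R,R)}=o(R^\beta)$ nor the subsequent ``iteration to exponent below $0$'' (which would in any case require a quantitative decay $O(R^{\beta'})$ with $\beta'<\beta$, not merely $o(R^\beta)$) is established. Note also that when $\beta=s$ the blow-down classification degenerates, since $(x_+)^s$ is itself a nontrivial $s$-homogeneous solution.

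Your closing diagnosis is accurate: what is missing is control on $u/(x_+)^s$ near the origin, i.e.\ a boundary Harnack type statement guaranteeing uniqueness of the blow-up. For the one-dimensional fractional Laplacian this is available (via the explicit Poisson kernel on intervals, or via the Caffarelli--Silvestre extension and a reflection), and that is indeed the extra ingredient behind the result in \cite{RS-K}. The present paper invokes the lemma only after reducing to the 1D $(-\Delta)^s$ case precisely because such a tool is \emph{not} available for the general operators \eqref{operator-L}--\eqref{ellipt-const}.
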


We can now give the:

\begin{proof}[Proof of Theorem \ref{Liouv-half}]
Given $\rho\geq1$, define $v_\rho(x)=\rho^{-\beta}u(\rho x)$.
Then, it follows that $v_\rho$ satisfies the same growth condition as $u$, namely
\[\|v_\rho\|_{L^\infty(B_R)}\leq CR^{\beta}\quad \textrm{for all}\ R\geq1.\]
Indeed, one has
\[\|v_\rho\|_{L^\infty(B_R)}=\rho^{-\beta}\|u\|_{L^\infty(B_{\rho R})}\leq \rho^{-\beta}\, C(\rho R)^\beta= CR^\beta.\]
Moreover, we know that  $Lv_\rho=0$ in $\R^n_+$ and $v_\rho=0$ in $\R^n_-$.

In particular, if we consider $\bar v_\rho=v_\rho\chi_{B_2}$, then $\bar v_\rho\in L^\infty(\R^n)$ satisfies
\begin{equation}\label{eq-567}
\left\{ \begin{array}{rcll}
L\bar v_\rho  &= &\bar g&\textrm{in }B_1^+ \\
\bar v_\rho&=&0&\textrm{in }B_1^-
\end{array}\right.
\end{equation}
for some $g\in L^\infty(B_1^+)$.
Denote $C_0=\|g\|_{L^\infty(B_1^+)}$.
Then, by Proposition \ref{prop-Cs-2}, it follows that
\[\|v_\rho\|_{C^s(B_{1/2})}=\|\bar v_\rho\|_{C^s(B_{1/2})}\leq CC_0.\]
Therefore, we find
\[[u]_{C^s(B_{\rho/2})}=\rho^{-s}[u(\rho x)]_{C^s(B_{1/2})}=\rho^{\beta-s}[v_\rho]_{C^s(B_{1/2})}\leq CC_0\rho^{\beta-s}.\]
In other words, we have proved that
\[[u]_{C^s(B_R)}\leq CR^{\beta-s}\qquad \textrm{for all}\ R\geq1.\]

Now, given $\tau\in S^{n-1}$ such that $\tau_n=0$, and given $h>0$, consider
\[w(x)=\frac{u(x+h\tau)-u(x)}{h^s}.\]
By the previous considerations, we have
\[\|w\|_{L^\infty(B_R)}\leq CR^{\beta-s}\qquad \textrm{for all}\ R\geq1.\]
Moreover, we clearly have $Lw=0$ in $\R^n_+$ and $w=0$ in $\R^n_-$.
Therefore, we can repeat the previous argument (applied to $w$ instead of $u$), to find that
\[[w]_{C^s(B_R)}\leq CR^{\beta-2s}\qquad \textrm{for all}\ R\geq1.\]

Hence, since $\beta<2s$, letting $R\rightarrow\infty$ in the previous inequality we find that
\[w\equiv0\qquad \textrm{in}\ \,\R^n.\]
Therefore, $u(x+h\tau)=u(x)$ for all $h>0$ and for all $\tau\in S^{n-1}$ such that $\tau_n=0$.
Thus, we have that $u$ depends only on the $x_n$-variable, i.e.,
\[u(x)=\bar u(x_n)\]
for some 1D function $\bar u$.

But we then have that
\[\begin{split}
Lu(x)&=\int_{S^{n-1}}\int_{-\infty}^{+\infty}\bigl(u(x+\theta r)+u(x-\theta r)-2u(x)\bigr)\frac{dr}{|r|^{1+2s}}\,d\mu(\theta)\\
&=\int_{S^{n-1}}\int_{-\infty}^{+\infty}\bigl(\bar u(x_n+\theta_n r)+\bar u(x_n-\theta_n r)-2\bar u(x_n)\bigr)\frac{dr}{|r|^{1+2s}}\,d\mu(\theta)\\
&=-c\int_{S^{n-1}}\int_{-\infty}^{+\infty}(-\Delta)^s_{\R}\bar u(x_n+\theta_n \cdot)\,d\mu(\theta)\\
&=-c\int_{S^{n-1}}\int_{-\infty}^{+\infty}|\theta_n|^{2s}(-\Delta)^s_{\R}\bar u(x_n)\,d\mu(\theta)\\
&=-c\,(-\Delta)^s\bar u(x_n),\end{split}\]
for some constant $c>0$.
Therefore, $\bar u$ solves $(-\Delta)^s\bar u=0$ in $\R_+$, $\bar u=0$ in $\R_-$.
Hence, using Lemma \ref{classification1D} we finally deduce that $\bar u(x_n)=K(x_n)_+^s$, and thus
\[u(x)=K(x_n)_+^s,\]
as desired.
\end{proof}

\section{Boundary regularity}
\label{sec6}

In this section we finally prove Theorem \ref{thm-bdry-reg}.

The main ingredient in its proof is the following result.
In it, we use the following terminology.

\begin{defi}\label{defi-split}
We say that $\Gamma$ is a $C^{1,1}$ surface with radius $\rho_0>0$ splitting $B_1$ into $U^+$ and $U^-$ if the following happens:
\begin{itemize}
\item The two disjoint domains $U^+$ and $U^-$ partition $B_1$, i.e., $\overline{B_1}=\overline{U^+}\cup \overline{U^-}$.
\item The boundary $\Gamma:=\partial U^+\backslash \partial B_1=\partial U^-\backslash \partial B_1$ is a $C^{1,1}$ surface with $0\in\Gamma$.
\item All points on $\Gamma\cap \overline{B_{3/4}}$ can be touched by two balls of radii $\rho_0$, one contained in $U^+$ and the other contained in $U^-$.
\end{itemize}
\end{defi}

The result reads as follows.

\begin{prop}\label{proponbdryreg}
Let $s\in (0,1)$ and $\beta\in(s,2s)$ be given constants.
Let $\Gamma$ be a $C^{1,1}$ surface with radius $\rho_0$ splitting $B_1$ into $U^+$ and $U^-$; see Definition~\ref{defi-split}.

Let $f\in L^\infty(U^+)$, and assume that $u\in L^\infty(\R^n)$ is a solution of
\[\left\{\begin{array}{rcl}
L u &= & f \quad \mbox{in } U^+\\
u  &=&0\quad \mbox{in }U^-,
\end{array}\right.\]
where $L$ is any operator of the form \eqref{operator-L}-\eqref{ellipt-const}.

Then, for all $z\in \Gamma\cap \overline{B_{1/2}}$ there is a constant $Q(z)$ with $|Q(z)|\leq CC_0$ for which
\[\left|u(x)-Q(z)\bigl((x-z)\cdot\nu(z)\bigr)_+^s\right|\leq CC_0|x-z|^\beta\qquad \textrm{for all}\ x\in B_1,\]
where $\nu(z)$ is the unit normal vector to $\Gamma$ at $z$ pointing towards $U^+$ and
\[C_0=\|u\|_{L^\infty(\R^n)}+\|f\|_{L^\infty(U^+)}.\]
The constant $C$ depends only on $n$, $\rho_0$, $s$, $\beta$, and the ellipticity constants \eqref{ellipt-const}.
\end{prop}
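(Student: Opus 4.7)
I would prove this by a contradiction-and-blow-up scheme modeled on Proposition \ref{claim-a}, with Theorem \ref{Liouv-half} playing the role of the entire-space Liouville theorem. Suppose the conclusion fails. Then there exist $L_m$ satisfying \eqref{operator-L}-\eqref{ellipt-const}, $C^{1,1}$-surfaces $\Gamma_m$ of radius $\rho_0$ splitting $B_1$ into $U^\pm_m$, solutions $u_m$ with $\|u_m\|_{L^\infty(\R^n)}+\|f_m\|_{L^\infty(U^+_m)}\le 1$, and points $z_m\in\Gamma_m\cap\overline{B_{1/2}}$ such that no constant $Q\in\R$ realizes the estimate at $z_m$ with constant $m$. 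Writing $\phi_{z,\nu}(x)=((x-z)\cdot\nu)_+^s$ and letting $Q^*_k(z,r)$ be the $L^\infty$-best-fit constant of $u_k$ by $\phi_{z,\nu_k(z)}$ on $B_r(z)$, I would define
\[
\theta(r) \;=\; \sup_k \sup_{z\in\Gamma_k\cap\overline{B_{1/2}}} \sup_{r'\ge r} (r')^{-\beta}\, \bigl\|u_k - Q^*_k(z,r')\,\phi_{z,\nu_k(z)}\bigr\|_{L^\infty(B_{r'}(z))}.
\]
The uniform bounds $\|u_k\|_\infty\le 1$ and $|Q^*_k|r^s\le 2$ give $\theta(r)<\infty$ for $r>0$, while the failure of the conclusion gives $\theta(r)\to\infty$ as $r\downarrow 0$. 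Pick $(k_m,z_m,r_m)$ with $r_m\ge 1/m$ almost maximizing $\theta$, so that the quantity above exceeds $\tfrac12\theta(r_m)$.

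Next, set $Q_m:=Q^*_{k_m}(z_m,r_m)$, $\nu_m:=\nu_{k_m}(z_m)$, rotate so that $\nu_m\to e_n$, and consider the blow-up
\[
v_m(x)= \frac{u_{k_m}(z_m+r_m x)- Q_m\,\phi_{z_m,\nu_m}(z_m+r_m x)}{r_m^\beta\,\theta(r_m)}.
\]
By construction $\|v_m\|_{L^\infty(B_1)}\ge 1/2$, and the best-fit property gives $\|v_m\|_{L^\infty(B_1)}\le\|v_m-c\,\phi_{0,\nu_m}\|_{L^\infty(B_1)}$ for every $c\in\R$. Using the scale-comparison $|Q^*_k(z,r_1)-Q^*_k(z,r_2)|\le C(\max r_i)^{\beta-s}\,\theta(\min r_i)$, which follows from the triangle inequality and $\|\phi_{z,\nu}\|_{L^\infty(B_r(z))}=r^s$ and crucially uses $\beta>s$, combined with the global bound $\|u_{k_m}\|_\infty\le 1$, one derives the growth $\|v_m\|_{L^\infty(B_R)}\le C R^\beta$ for all $R\ge 1$. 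Since $\beta<2s$, this controls the nonlocal tails uniformly. Applying Proposition \ref{prop-Cs-2} (rescaled) near the rescaled boundary and Theorem \ref{thm-interior-ball} (rescaled) away from it yields equicontinuity on compact sets; since $\Gamma_{k_m}$ is $C^{1,1}$ of uniform radius $\rho_0$, the rescaled surfaces $r_m^{-1}(\Gamma_{k_m}-z_m)$ converge to the hyperplane $\{x_n=0\}$ in Hausdorff distance on compact sets, so the rescaled domains converge to $\R^n_\pm$. Combined with the weak convergence $L_{k_m}\to L$ from Lemma \ref{lem-subseq} and the fact that the rescaled inhomogeneity is $r_m^{2s-\beta}/\theta(r_m)\to 0$, a diagonal argument produces a subsequence with $v_m\to v$ locally uniformly, where $v\in C^s_{\mathrm{loc}}(\R^n)$ satisfies $Lv=0$ in $\R^n_+$, $v=0$ in $\R^n_-$, $\|v\|_{L^\infty(B_R)}\le CR^\beta$, and inherits the best-fit minimality in $B_1$.

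Theorem \ref{Liouv-half} now forces $v(x)=K(x_n)_+^s$ for some $K\in\R$, and the inherited best-fit minimality in $B_1$ forces $K=0$, so $v\equiv 0$ -- contradicting $\|v\|_{L^\infty(B_1)}\ge 1/2$. The existence and uniform bound $|Q(z)|\le CC_0$ at each $z\in\Gamma\cap\overline{B_{1/2}}$ then follow from the estimate by the standard observation that $\{Q^*(z,r)\}_{r\downarrow 0}$ is Cauchy (at rate $r^{\beta-s}$) and by the $C^s$ boundary bound of Proposition \ref{prop-Cs}.

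\textbf{Main obstacle.} The most delicate step is passing the nonlocal equation to the limit under the joint weak convergence of the operators $L_{k_m}$ and the domains $U^\pm_{k_m}$. One must simultaneously preserve the exterior condition $v_m=0$ in $U^-_m$ in the limit (which needs the uniform $C^{1,1}$ two-balls condition to ensure Hausdorff convergence of the rescaled exterior) and dominate the tails in the nonlocal integrals defining $L_{k_m}v_m$ (which is where $\beta<2s$ is essential). A related subtlety is the scale-comparison estimate for $Q^*_k(z,r)$ that upgrades the $\theta$-based local bound into the global $CR^\beta$ growth of $v_m$; this step is precisely where the hypothesis $\beta>s$ gets used.
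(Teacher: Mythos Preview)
Your overall strategy---contradiction, define a monotone $\theta$, blow up, and invoke Theorem~\ref{Liouv-half}---is the paper's, and the scale-comparison for $Q^*$ and the growth control $\|v_m\|_{L^\infty(B_R)}\le CR^\beta$ go through (dyadically) as you indicate. The choice of $L^\infty$ best fit instead of the paper's $L^2$ best fit is harmless.

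However, there is a genuine gap in the equicontinuity step. You write that $v_m=0$ in the rescaled $U^-_m$ and propose to apply Proposition~\ref{prop-Cs-2}. This is false: in the rescaled exterior $\{z_m+r_mx\in U^-_{k_m}\}$ one has $u_{k_m}=0$, but the subtracted profile $Q_m\bigl((x-z_m)\cdot\nu_m\bigr)_+^s$ vanishes on the \emph{tangent half-space} $\{x\cdot\nu_m\le 0\}$, not on the curved exterior. In the symmetric difference between the curved domain and the half-space (a region of thickness $\sim r_m^2$ at scale $r_m$, hence of thickness $\sim r_m$ after rescaling) $v_m$ is generically nonzero, and Proposition~\ref{prop-Cs-2} does not apply. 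Nor does applying Proposition~\ref{prop-Cs-2} to $u_{k_m}$ alone suffice: the resulting $C^s$ seminorm, once divided by $r_m^\beta\theta(r_m)$, does not stay bounded without additional input.

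The paper closes this gap in two steps you are missing. First, using the supersolution of Lemma~\ref{supersol} and the $C^{1,1}$ two-ball condition, it proves the barrier bound
\[
\bigl\|u_{k_m}-\phi_m\bigr\|_{L^\infty\bigl(B_r(z_m)\cap(\Omega^-_{k_m}\cup\Sigma^-_m)\bigr)}\le Cr^{2s},
\]
which controls $v_m$ in the mismatch region with the correct rate. Second, it invokes Lemma~\ref{propCgammaMs} (not Proposition~\ref{prop-Cs-2}): this lemma allows \emph{nonzero} H\"older exterior data $g$, and is applied twice---once to $u_{k_m}$ to get a crude $C^\gamma$ bound, and then, after interpolating with the $r^{2s}$ barrier bound, to the rescaled $v_m$ itself. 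This is where the geometry of the domain (the $\rho_0$ in the $C^{1,1}$ hypothesis) actually enters the compactness argument; your sketch uses $\rho_0$ only for Hausdorff convergence of the surfaces, which is not enough.
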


In order to show Proposition \ref{proponbdryreg}, we will need some preliminary lemmas.

First, we will need the following technical result.

\begin{lem}\label{lemseries}
Let $\beta>s$ and $\nu\in S^{n-1}$ be some unit vector.
Let $u\in C(B_1)$ and define
\begin{equation}\label{defphir}
\phi_r(x) := Q_*(r )\,(x\cdot \nu)_+^s,
\end{equation}
where
\[Q_*(r ) : = {\rm arg\,min}_{Q\in \R} \int_{B_r} \bigl(u(x) - Q(x\cdot \nu)_+^s\bigr)^2\,dx = \frac{\int_{B_r} u(x) \, (x\cdot \nu)_+^s \,dx}{\int_{B_r} (x\cdot \nu)_+^{2s} \,dx}.\]

Assume that for all $r\in(0,1)$ we have
\begin{equation}\label{allr}
\bigl\| u - \phi_r\bigr\|_{L^\infty(B_r)} \le C_0 r^\beta.
\end{equation}
Then, there is $Q\in \R$ satisfying $|Q|\le C\bigl(C_0+\|u\|_{L^\infty(B_1)}\bigr)$ such that
\[ \bigl\|u - Q(x\cdot \nu)_+^s \bigr\|_{L^\infty(B_r)}\le C C_0 r^{\beta}\]
for some constant $C$ depending only on $\beta$ and $s$.
\end{lem}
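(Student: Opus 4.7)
The plan is to show that the one-parameter family of best-fit coefficients $Q_*(r)$ is Cauchy as $r\downarrow 0$, and that its limit $Q$ is the coefficient we are looking for. Concretely, I would set up a dyadic comparison between $\phi_r$ and $\phi_{r/2}$, exploit the fact that $\beta-s>0$ to sum a geometric series, and then close the final estimate with the triangle inequality.

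The main computation is the following. Fix $r\in(0,1]$. Since $\phi_r - \phi_{r/2}=\bigl(Q_*(r)-Q_*(r/2)\bigr)(x\cdot\nu)_+^s$, the hypothesis \eqref{allr} applied at radii $r$ and $r/2$ gives
\[
\bigl\|\phi_r - \phi_{r/2}\bigr\|_{L^\infty(B_{r/2})} \;\leq\; \|\phi_r-u\|_{L^\infty(B_r)} + \|u-\phi_{r/2}\|_{L^\infty(B_{r/2})} \;\leq\; 2 C_0 r^\beta.
\]
Evaluating the left-hand side at the specific point $x_0=(r/2)\nu\in \overline{B_{r/2}}$, where $(x_0\cdot\nu)_+^s=(r/2)^s$, yields
\[
\bigl|Q_*(r)-Q_*(r/2)\bigr|\,(r/2)^s \;\leq\; 2 C_0 r^\beta,
\]
that is, $|Q_*(r)-Q_*(r/2)|\leq C\,C_0\,r^{\beta-s}$, with $C$ depending only on $\beta,s$. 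Since $\beta>s$, the dyadic increments $|Q_*(2^{-k})-Q_*(2^{-k-1})|\leq CC_0\,2^{-k(\beta-s)}$ are summable, so $\{Q_*(2^{-k})\}$ is Cauchy and converges to some $Q\in\R$. Telescoping also gives, for every $r\in(0,1]$,
\[
|Q_*(r)-Q| \;\leq\; \sum_{j=0}^{\infty} \bigl|Q_*(2^{-j}r) - Q_*(2^{-j-1}r)\bigr| \;\leq\; \sum_{j=0}^{\infty} CC_0(2^{-j}r)^{\beta-s} \;\leq\; C C_0\, r^{\beta-s}.
\]

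To bound $|Q|$, I would use the hypothesis at $r=1$: the inequality $\|u-Q_*(1)(x\cdot\nu)_+^s\|_{L^\infty(B_1)}\leq C_0$ evaluated at $x=\nu$ gives $|Q_*(1)|\leq C_0+\|u\|_{L^\infty(B_1)}$, and combining with $|Q-Q_*(1)|\leq CC_0$ (from the $r=1$ case of the previous display) yields $|Q|\leq C\bigl(C_0+\|u\|_{L^\infty(B_1)}\bigr)$. Finally, the desired estimate follows from the triangle inequality: for every $r\in(0,1]$,
\[
\bigl\|u-Q(x\cdot\nu)_+^s\bigr\|_{L^\infty(B_r)} \;\leq\; \|u-\phi_r\|_{L^\infty(B_r)} + |Q_*(r)-Q|\,r^s \;\leq\; C_0\,r^\beta + CC_0\, r^{\beta-s}\cdot r^s \;\leq\; CC_0\, r^\beta.
\]

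The only real subtlety is the choice of evaluation point $x_0=(r/2)\nu$ in the comparison step: it is precisely the point where $(x\cdot\nu)_+^s$ attains its maximum on $\overline{B_{r/2}}$, which turns a sup-norm control of $\phi_r-\phi_{r/2}$ into pointwise control of the scalar $Q_*(r)-Q_*(r/2)$ with the sharp factor $(r/2)^s$. Everything else is bookkeeping with a geometric series, using in an essential way the strict inequality $\beta>s$ from the hypothesis.
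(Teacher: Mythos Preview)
Your proposal is correct and follows essentially the same strategy as the paper's proof: compare $\phi_r$ with $\phi_{r/2}$ via the hypothesis \eqref{allr}, extract the scalar increment $|Q_*(r)-Q_*(r/2)|\le CC_0 r^{\beta-s}$, sum the geometric series (using $\beta>s$) to obtain the limit $Q$ with $|Q_*(r)-Q|\le CC_0 r^{\beta-s}$, and close with the triangle inequality. The paper does the same, only phrasing the dyadic comparison as $\phi_{2r}$ versus $\phi_r$ on $B_r$.

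One small technical quibble: you invoke the hypothesis at $r=1$ and evaluate at $x=\nu\in\partial B_1$, but the assumption \eqref{allr} is stated for $r\in(0,1)$ and $u$ is only in $C(B_1)$. This is harmless---either repeat the argument at some fixed $r_0<1$ (say $r_0=1/2$) and evaluate at $x_0=(r_0/2)\nu$, or bound $|Q_*(1)|$ directly from its integral formula and $\|u\|_{L^\infty(B_1)}$, as the paper does.
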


\begin{proof}
We may assume $\|u\|_{L^\infty(B_1)}=1$.
By \eqref{allr}, for all $x' \in  B_r$ we have
\[\bigl| \phi_{2r}(x') - \phi_{r}(x')\bigr| \le  \bigl| u(x') -\phi_{2r}(x')\bigr| + \bigl| u(x') -\phi_{r}(x')\bigr| \le C C_0 r^{\beta}.\]
But this happening for every $x' \in  B_r$ yields, recalling \eqref{defphir},
\[ \bigl| Q_*(2r) - Q_*(r ) \bigr| \le C C_0 r^{\beta-s}.\]
In addition, since $\|u\|_{L^\infty(B_1)}= 1$, we clearly have that
\begin{equation}\label{boundQstar}
| Q_*(1)|\le C.
\end{equation}
Since $\beta>s$, this implies the existence of the limit
\[ Q := \lim_{r\searrow 0} Q_*(r ).\]
Moreover, using again $\beta-s>0$,
\[\bigl| Q -Q_*(r ) \bigr| \le  \sum_{m=0}^\infty \bigl| Q_*(2^{-m}r) - Q_*(2^{-m-1}r) \bigr|
\le  \sum_{m=0}^\infty C C_0 2^{-m(\beta-s)}  r^{\beta-s} \le C C_0 r^{\beta-s}.\]
In particular, using \eqref{boundQstar} we obtain
\begin{equation}\label{boundab2}
|Q|\le C(C_0+1) .
\end{equation}

We have thus proven that for all $r\in(0,1)$
\[
\begin{split}
\bigl\| u - Q(x\cdot\nu)_+^s\|_{L^\infty(B_r)}  &\le  \| u- Q_* (r )(x\cdot\nu)_+^s\|_{L^\infty(B_r)}  \,+\\
& \qquad \qquad +   \|Q_* (r )(x\cdot\nu)_+^s - Q(x\cdot\nu)_+^s \|_{L^\infty(B_r)}\\
&\le  C_0 r^{\beta} + |Q_*( r)-Q| r^{s} \le C(C_0+1)r^{\beta}.
\end{split}
\]
\end{proof}

Second, we will also need the following estimate in order to control the ``errors coming from the geometry of the domain''.

\begin{lem}\label{propCgammaMs}
Assume that $B_1$ is divided into two disjoint subdomains $\Omega_1$ and $\Omega_2$ such that $\overline{B_1}= \overline\Omega_1\cup \overline\Omega_2$.
Assume that  $\Gamma := \partial \Omega_1\setminus \partial B_1= \partial \Omega_2 \setminus  \partial B_1$ is a $C^{0,1}$ surface and that $0\in \Gamma$.
Moreover assume that, for some $\rho_0>0$, all the points on $\Gamma\cap \overline {B_{3/4}}$ can be touched by a ball of radius $\rho_0\in(0,1/4)$ contained in $\Omega_2$.

Let $s\in(0,1)$, and let $L$ be any operator of the form \eqref{operator-L}-\eqref{ellipt-const}.
Let $\alpha\in(0,1)$, $g\in C^\alpha\bigl(\overline{\Omega_2}\bigr)$, $f\in L^\infty(\Omega_1)$, and $u\in C(\overline{B_1})$ satisfying $|u(x)|\leq M\,(1+|x|)^\beta$ in $\R^n$ for some $\beta<2s$.
Assume that $u$ satisfies in the weak sense
\[Lu=f\ \, \textrm{in }\, \Omega_1,\quad u=g\ \textrm{ in }\,\Omega_2.\]
Then, there is $\gamma\in(0,\alpha)$ such that  $u\in C^\gamma\bigl(\overline{B_{1/2}}\bigr)$ with the estimate
\[
\|u\|_{C^\gamma(B_{1/2})}\le C \bigl(\|u\|_{L^\infty(B_1)} + \|g\|_{C^\alpha(\Omega_2)}+\|f\|_{L^\infty(\Omega_1)}+M\bigr).
\]
The constants $C$ and $\gamma$ depend only on $n$, $s$, $\alpha$, $\rho_0$, $\beta$, and ellipticity constants.
\end{lem}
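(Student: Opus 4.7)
The goal is a Hölder estimate up to the one-sided boundary portion $\Gamma$ inside $B_{1/2}$. After normalizing so that
\[ C_\star := \|u\|_{L^\infty(B_1)} + \|g\|_{C^\alpha(\Omega_2)} + \|f\|_{L^\infty(\Omega_1)} + M = 1, \]
I would combine a boundary oscillation decay (obtained from a one-sided barrier) with the interior regularity already available. The interior input is Corollary \ref{cor-interior-ball-growth}: for any $x_0 \in \Omega_1 \cap \overline{B_{1/2}}$ with $d := \mathrm{dist}(x_0,\Gamma)$, rescaling $B_{d/2}(x_0) \subset \Omega_1$ to the unit ball yields a uniform $C^{2s-\epsilon}$ bound on $u$ in $B_{d/4}(x_0)$, with the tail term $\sup_{R\ge 1} R^{\epsilon-2s}\|u\|_{L^\infty(B_R(x_0))}$ finite thanks to the growth hypothesis $|u|\le M(1+|x|)^\beta$ with $\beta<2s$. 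The boundary input is the content of the next step.

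\textbf{Boundary oscillation decay.}
For each $z \in \Gamma\cap\overline{B_{3/4}}$ I would prove
\[ \mathrm{osc}_{B_r(z)}(u-g(z)) \le C\, r^{\gamma} \qquad \text{for all } r \in (0, \rho_0/2), \]
for some $\gamma = \gamma(n,s,\alpha,\beta,\rho_0,\lambda,\Lambda) \in (0,\alpha)$. This is obtained by iterating a geometric step
\[ \mathrm{osc}_{B_{r/2}(z)}(u-g(z)) \,\le\, \theta\, \mathrm{osc}_{B_r(z)}(u-g(z)) + C\bigl(r^\alpha + r^{2s}\bigr) \]
with $\theta<1$ independent of $r$. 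To justify this one-step decrease, fix the exterior ball $B_{\rho_0}(y) \subset \Omega_2$ touching $z$ and scale/translate the supersolution of Lemma \ref{supersol}: $\varphi_z(x) := \varphi_1\bigl((x-y)/\rho_0\bigr)$ vanishes in $B_{\rho_0}(y)$, satisfies $L\varphi_z \le -c\,\rho_0^{-2s}$ in the adjacent annulus, grows like $(\mathrm{dist}(x,B_{\rho_0}(y)))^s$ nearby, and is $\ge 1$ far out. Setting $w := u-g(z)$, one has $|Lw| = |f| \le 1$ in $\Omega_1$ and $|w|\le r^\alpha$ in $\Omega_2 \cap B_r(z)$ by $\|g\|_{C^\alpha}\le 1$. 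Comparing $w$ with a multiple of $\varphi_z$ plus an additive constant of size $r^\alpha + r^{2s}$, via the weak maximum principle in $\Omega_1 \cap B_r(z)$, bounds $w$ from above there; the growth hypothesis is used to control the tail contributions of $u$ in the nonlocal integrals. The same argument applied to $-w$ yields a matching lower bound, producing the oscillation decrease.

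\textbf{Gluing and main obstacle.}
Once the boundary Hölder bound is available, the $C^\gamma$ estimate on $\overline{B_{1/2}}$ follows by a standard triangle-inequality argument: given $x_1,x_2\in \overline{B_{1/2}}$, if both lie in $\Omega_2$ use $g\in C^\alpha$; if $|x_1-x_2|$ is comparable to $\min\{\mathrm{dist}(x_i,\Gamma)\}$, use the boundary estimate at a nearby point of $\Gamma$; otherwise apply the rescaled interior estimate from Corollary \ref{cor-interior-ball-growth} in a ball containing both points and sitting inside $\Omega_1$. Taking $\gamma$ smaller than both $\alpha$ and the exponent coming from the iteration gives the claim, with constants depending only on $n,s,\alpha,\rho_0,\beta,\lambda,\Lambda$. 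The main obstacle is precisely the oscillation decrease: in a standard elliptic setting it follows from the Harnack inequality, which is \emph{not available} for general operators of the form \eqref{operator-L}. The workaround is to use the one-sided barrier of Lemma \ref{supersol} twice (for $u-g(z)$ and for $g(z)-u$), and to track carefully the additive error $C(r^\alpha+r^{2s})$ in the iteration so that it is absorbed by the geometric factor $\theta$ and produces a genuine Hölder exponent rather than a mere modulus of continuity.
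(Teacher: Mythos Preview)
Your overall architecture—boundary Hölder control via the one-sided barrier of Lemma \ref{supersol}, interior regularity from Corollary \ref{cor-interior-ball-growth}, then gluing—is precisely the paper's two-step scheme. The gap is in the boundary step.

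You place the barrier at the \emph{fixed} scale $\rho_0$, i.e.\ $\varphi_z=\varphi_1\bigl((x-y)/\rho_0\bigr)$, and then claim a Krylov--Safonov type iteration $\mathrm{osc}_{B_{r/2}(z)}(w)\le\theta\,\mathrm{osc}_{B_r(z)}(w)+C(r^\alpha+r^{2s})$ with $\theta<1$. A fixed-scale barrier cannot produce this. For the nonlocal maximum principle in $\Omega_1\cap B_r(z)$ your supersolution must dominate $w$ on the whole complement, in particular on $\Omega_2\cap B_{\rho_0}(y)$ (where $\varphi_z\equiv0$ but $|w|=|g-g(z)|$ can be as large as $C\rho_0^\alpha$) and just outside $B_r(z)$ in $\Omega_1$ (where $\varphi_z\approx (r/\rho_0)^s$ but $|w|$ is only bounded by the unit-scale data). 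Matching these forces the multiplicative constant in front of $\varphi_z$ to blow up like $(\rho_0/r)^s$, and the resulting bound in $B_{r/2}(z)$ is no better than what you started with. More fundamentally, obtaining a genuine multiplicative $\theta<1$ at every dyadic scale is a Harnack-type statement, and as you note, Harnack is unavailable for these operators.

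The paper bypasses iteration entirely. After localizing (replacing $u$ by $u\chi_{B_1}$, which turns the growth assumption into a bounded right-hand side), it uses the barrier at a \emph{variable} scale $R\in(0,\rho_0)$: with
\[
\psi(x)=g(y_R)+\|g\|_{C^\alpha}\bigl((1+\epsilon)R\bigr)^\alpha+\bigl(C_0'+\|u\|_{L^\infty}\bigr)\,\varphi_1\!\left(\frac{x-y_R}{R}\right)
\]
a single comparison gives $u(x)-g(z)\le C\bigl(R^\alpha+(r/R)^s\bigr)$ in $B_r(z)$ for all $r\in(0,\epsilon R)$. Choosing $R=r^{1/2}$ and repeating for $-u$ yields directly $|u-g(z)|\le Cr^\delta$ with $\delta=\tfrac12\min\{\alpha,s\}$; no iteration and no $\theta$ are needed. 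Your second step and the gluing are fine and essentially identical to the paper's.
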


\begin{proof}
Define $\tilde u= u\chi_{B_1}$.
Then $\tilde u$ satisfies $L\tilde u=\tilde f$ in $\Omega_1\cap B_{3/4}$ and $\tilde u=g$ in $\Omega_2$,
where $\|\tilde f\|_{L^\infty(\Omega_1\cap B_{3/4})} \leq C \bigl(\|f\|_{L^\infty(\Omega_1)}+M\bigr):=C_0'$.
The constant $C$ depends only on $n$, $s$, $\beta$, and ellipticity constants.

The proof consists of two steps.

\emph{First step.} We next prove that there are $\delta>0$ and $C$ such that for all $z\in \Gamma\cap\overline{B_{1/2}}$ it is
\begin{equation}\label{onbdryinproof}
\|\tilde u-g(z)\|_{L^\infty(B_r(z))}\le Cr^\delta \quad \mbox{for all }r\in(0,1),
\end{equation}
where $\delta$ and $C$ depend only on $n$, $s_0$, $C_0'$, $\|u\|_{L^\infty(B_1)}$, $\|g\|_{C^\alpha(\Omega_2)}$, and ellipticity constants.

Let  $z\in \Gamma\cap\overline{B_{1/2}}$.
By assumption, for all $R\in(0,\rho_0)$ there $y_R\in \Omega_2$ such that a ball $B_R(y_R)\subset \Omega_2$ touches $\Gamma$ at $z$, i.e., $|z-y_R|=R$.

Let $\varphi_1$ and $\epsilon>0$ be the supersolution and the constant in Lemma \ref{supersol}.
Take
\[\psi(x)= g(y_R)+  \|g\|_{C^\alpha({\Omega_2})} \bigl((1+\epsilon)R\bigr)^\alpha + \bigl(C_0'+\|u\|_{L^\infty(B_1)}\bigr)\varphi_1\left(\frac{x-y_R}{R}\right).\]
Note that $\psi$ is above $\tilde u$ in $\Omega_2\cap B_{(1+\epsilon)R}$.
On the other hand, from the properties of $\varphi_1$, it is
$M^+ \psi \le -\bigl(C_0'+\|u\|_{L^\infty(B_1)}\bigr) R^{-2s} \le -C_0'$  in the annulus $B_{(1+\epsilon)R}(y_R)\setminus B_R(y_R)$, while $\psi\ge \|u\|_{L^\infty(B_1)} \ge \tilde u$ outside $B_{(1+\epsilon)R}(y_R)$.
It follows that $\tilde u \le \psi$ and thus we have
\[ \tilde u(x)- g(z) \le C\bigl(R^\alpha+ (r/R)^s\bigl) \quad \mbox{for all }x\in B_r(z)\quad\textrm{and for all}\ r\in(0,\epsilon R)\ \textrm{and}\ R\in(0,\rho_0).\]
Here, $C$ denotes a constant depending only on $n$, $s_0$, $C_0'$, $\|u\|_{L^\infty(B_1)}$, $\|g\|_{C^\alpha({\Omega_2})}$, and ellipticity constants.
Taking $R=r^{1/2}$ and repeating the argument up-side down we obtain
\[ |\tilde u(x)- g(z)| \le C\bigl(r^{\alpha/2}+r^{s/2}\bigr)\leq Cr^\delta \quad \mbox{for all }x\in B_r(z)\ \textrm{and}\ r\in(0,\epsilon^{1/2})\]
for $\delta=\frac12\min\{\alpha,s_0\}$.
Taking a larger constant $C$, \eqref{onbdryinproof} follows.

\emph{Second step.} We now show that \eqref{onbdryinproof} and the interior estimates in Theorem \ref{thm-interior-ball} (b) imply $\|u\|_{C^\gamma(B_{1/2})}\le C$, where $C$ depends only on the same quantities as above.

Indeed, given $x_0\in \Omega_1\cap B_{1/2}$, let $z\in \Gamma$ and $r>0$ be such that
\[d= {\rm dist\,}(x_0, \Gamma)= {\rm dist\,}(x_0,z).\]
Let us consider
\[ v(x) = \tilde u\left(x_0+\frac{d}{2} x\right)- g(z).\]
We clearly have
\[ \|v\|_{L^\infty(B_1)}\le C\quad \mbox{and}\quad \|v\|_{L^\infty(B_R)}\le CR^s\quad\textrm{for}\ R\geq1.\]
On the other hand, $v$ satisfies
\[ Lv(x) =(d/2)^{2s} L\tilde u(x_0+rx)\quad \textrm{in}\ B_1\]
and thus
\[ \|Lv\|_{L^\infty(B_1)}\leq C_0'\quad \textrm{in}\ B_1.\]
Therefore, Corollary \ref{cor-interior-ball-growth} yields
\[ \|v\|_{C^\alpha(B_{1/2})} \le C\]
or equivalently
\begin{equation}\label{interiorinproof}
[u]_{C^\alpha(B_{d/4}(x_0))}\le Cd^{-\alpha}.
\end{equation}

Combining \eqref{onbdryinproof} and \eqref{interiorinproof}, using the same argument as in the proof of Proposition 1.1 in \cite{RS-K}, we obtain
\[ \|u\|_{C^\gamma(\Omega_1\cap B_{1/2})}\le C,\]
as desired.
\end{proof}

Using the previous results, and a compactness argument in the spirit of the one in \cite{RS-K}, we can give the:

\begin{proof}[Proof of Proposition \ref{proponbdryreg}]
Assume that there are sequences $\Gamma_k$, $\Omega^+_k$, $\Omega^-_k$, $f_k$, $u_k$, and $L_k$ that satisfy the assumptions of the proposition, that is,
\begin{itemize}
\item $\Gamma_k$ is a $C^{1,1}$ hyper surface with radius $\rho_0$ splitting $B_1$ into $\Omega_k^+$ and $\Omega_k^-$;
\item $L_k$ is of the form \eqref{operator-L} and satisfying \eqref{ellipt-const};
\item $\|u_k\|_{L^\infty(\R^n)} +\|f_k\|_{L^\infty(\Omega^+_k)} =1$;
\item $u_k$ is a solution of $L u_k = f_k$ in $\Omega^+_k$ and $u_k= 0$ in $\Omega^-_k$;
\end{itemize}
but suppose for a contradiction that the conclusion of the proposition does not hold.
That is, for all $C>0$, there are $k$ and $z\in \Gamma_k\cap \overline{B_{1/2}}$ for which
no constant $Q\in \R$ satisfies
\begin{equation}\label{1k}
\left| u_k(x) - Q \bigl((x-z)\cdot \nu_k(z)\bigr)_+^s \right| \le C|x-z|^{\beta}\quad \mbox{for all }x\in B_1.
\end{equation}
Here, $\nu_k(z)$ denotes the unit normal vector to $\Gamma_k$ at $z$, pointing towards $\Omega^+_k$.

In particular, using Lemma \ref{lemseries},
\begin{equation}\label{2k}
\sup_k \sup_ {z\in \Gamma_k \cap B_{1/2}} \sup_{r>0} \ r^{-\beta}\left\| u_k- \phi_{k,z,r} \right\|_{L^\infty(B_{r}(z))} = \infty,
\end{equation}
where
\begin{equation}\label{phi-k-z-r}
\phi_{k,z,r}(x) = Q_{k,z}(r)\,\bigl((x-z)\cdot \nu_k(z)\bigr)_+^s
\end{equation}
and
\[\begin{split}
Q_{k,z}(r):=& \ {\rm arg\,min}_{Q\in \R}  \int_{B_r(z)}  \left| u_k(x) -Q\bigl((x-z)\cdot \nu_k(z)\bigr)_+^s\right|^2 \,dx\\
=&\ \frac{\int_{B_r(z)}u_k(x)\bigl((x-z)\cdot\nu_k(z)\bigr)_+^s dx}{\int_{B_r(z)}\bigl((x-z)\cdot\nu_k(z)\bigr)_+^{2s}dx}.
\end{split}\]

Next define the monotone in $r$ quantity
\[\begin{split}
 \theta(r) := \sup_k  \sup_ {z\in \Gamma_k \cap B_{1/2}}  \sup_{r'>r}  \ (r')^{-\beta} &
\max\biggl\{\bigl\|u_k-\phi_{k,z,r'}\bigr\|_{L^\infty\left(B_{r'}(x_0)\right)}\, ,\\
& \hspace{25mm}(r')^s\left|Q_{k,z}(2r')-Q_{k,z}(r')\right|\biggr\}.
\end{split}\]
We have $\theta(r)<\infty$ for $r>0$ and $\theta(r)\nearrow \infty$ as $r\searrow0$.
Clearly, there are sequences $r_m\searrow 0$, $k_m$, and $z_{m}\to z \in \overline B_{1/2}$, for which
\begin{equation}\label{nondeg2-3}
\begin{split}
(r_m)^{-\beta}&\max\biggl\{\left\|u_{k_m}-\phi_{k_m, z_m,r_m}\right\|_{L^\infty(B_{r_m}(x_{m}))}\, ,\\
&\hspace{30mm}  (r_m)^s\left|Q_{k_m,z_m}(2r_m)-Q_{k_m,z_m}(r_m)\right|\biggr\}\ge \theta(r_{m})/2.
\end{split}\end{equation}
From now on in this proof we denote $\phi_m = \phi_{k_m, z_m,r_m}$ and $\nu_m= \nu_{k_m}(z_m)$.

In this situation we consider
\[ v_m(x) = \frac{u_{k_m}(z_{m} +r_m x)-\phi_{m}(z_{m}+r_m x)}{(r_m)^{\beta}\theta(r_m)}.\]
Note that, for all $m\ge 1$,
\begin{equation}\label{2-3}
\int_{B_1} v_m(x)\bigl(x\cdot \nu_m\bigr)_+^s \,dx =0.
\end{equation}
This is the optimality condition for least squares.

Note also that \eqref{nondeg2-3} is equivalent to
\begin{equation}\label{nondeg35-3}
\max\left\{ \|v_m\|_{L^\infty(B_1)}\,,\, \left|\frac{\int_{B_2} v_m(x)\,(x\cdot \nu_m)_+^s\,dx}{\int_{B_2}(x\cdot \nu_m)_+^{2s}\,dx}-\frac{\int_{B_1} v_m(x)\,(x\cdot \nu_m)_+^s\,dx}{\int_{B_1}(x\cdot \nu_m)_+^{2s}\,dx}\right|\right\}\ge 1/2,
\end{equation}
which holds for all $m\geq1$.

In addition, by definition of $\theta$, for all $k$ and $z$ we have
\[ \frac{(r')^{s-\beta}| Q_{k,z}(2r')-Q_{k,z}(r')|}{\theta(r)}\le1 \quad\mbox{for all }r'\ge r>0.\]
Thus, for $R=2^N$ we have
\[\begin{split}
\frac{r^{s-\beta}|Q_{k,z}(rR)- Q_{k,z}(r)|}{\theta(r)}&\le \sum_{j=0}^{N-1} 2^{j(\beta-s)}\frac{ (2^jr)^{s-\beta} |Q_{k,z}(2^{j+1}r)- Q_{k,z}(2^jr)|}{\theta(r)}\\
&\le \sum_{j=0}^{N-1} 2^{j(\beta-s)} \le C 2^{N(\beta-s)} = CR^{\beta-s},
\end{split}\]
where we have used $\beta>s$.

Moreover, we have
\[\begin{split}
\|v_{m}\|_{L^\infty(B_R)} &= \frac{1}{\theta({r_m})(r_m)^{\beta} } \bigl\|u_{k_m}-Q_{k_m,z_m}(r_m) \bigl((x-z_m)\cdot \nu_m\bigr)_+^{s_m}\bigr\|_{L^\infty\left(B_{r_m R}\right)}
\\
&\le \frac{R^{\beta}}{\theta({r_m}) (r_m R)^{\beta} }\bigl\|u_{k_m}- Q_{k_m,z_m}(r_mR) \bigl((x-z_m)\cdot \nu_m\bigr)_+^{s_m}\bigr\|_{L^\infty\left(B_{r_m R}\right)} \,+
\\
& \hspace{35mm} +  \frac{1}{\theta({r_m}) (r_m)^{\beta} } |Q_{k_m,z_m}(r_mR)-Q_{k_m,z_m}(r_m)|\,(r_mR)^{s_m}
\\
&\le \frac{R^{\beta}\theta({r_m}R)}{\theta({r_m})} + CR^\beta,
\end{split}\]
and hence $v_m$ satisfy the growth control
\begin{equation}\label{growthc0-3}
\|v_{m}\|_{L^\infty(B_R)} \leq CR^{\beta}\quad \textrm{for all}\ \,R\ge 1.
\end{equation}
We have used the definition $\theta(r)$ and its monotonicity.

Now, without loss of generality (taking a subsequence), we assume that
\[\nu_m\longrightarrow \nu \in S^{n-1}.\]
Then, the rest of the proof consists mainly in showing the following Claim.

\vspace{5pt}
\noindent {\bf Claim. }{\em A subsequence of $v_m$ converges locally uniformly in $\R^n$ to some function $v$ which satisfies
$Lv = 0$ in $\{x\cdot \nu >0\}$ and $v=0$ in $\{x\cdot\nu<0\}$, for some $L$ of the form \eqref{operator-L} satisfying \eqref{ellipt-const}.
}
\vspace{5pt}

Once we know this, a contradiction is immediately reached using the Liouville type Theorem \ref{Liouv-half}, as seen at the end of the proof.

To prove the Claim, given $R\ge1$ and $m$ such that $r_mR<1/2$ define
\[\Omega_{R,m}^+ = \bigl\{x\in B_R\ :\ (z_m+ r_mx)\in \Omega^+_{k_m} \quad \mbox{and}\quad x\cdot\nu_m(z_m)>0  \bigr\}.\]
Notice that for all $R$ and $k$, the origin $0$ belongs to the boundary of $\Omega_{R,m}^+$.

We will use that $v_m$ satisfies an elliptic equation in $\Omega_{R,m}^+$.
Namely,
\begin{equation}\label{eqvm0}
L_{k_m} v_m(x) =  \frac{(r_m)^{2s}}{(r_m)^\beta\theta(r_m)}f_{k_m}(z_m +r_m x) \quad \mbox{ in }\Omega_{R,m}^+.
\end{equation}
This follows from the definition of $v_m$ and the fact that $L_{k_m}\phi_m = 0$ in $\{(x-z)\cdot \nu_m>0\}$.

Notice that the right hand side of \eqref{eqvm0} converges uniformly to $0$ as $r_m \searrow 0$, since $\beta<2s$ and $\theta(r_m)\uparrow \infty$.

In order to prove the convergence of a subsequence of $v_m$, we first obtain, for every fixed $R\ge 1$, a uniform in $m$ bound for $\|v_m\|_{C^\delta (B_R)}$, for some small $\delta>0$.
Then the local uniform convergence of a subsequence of $v_m$ follows from the Arzel\`a-Ascoli theorem.

Let us fix $R\ge1$ and consider that $m$ is always large enough so that  $r_mR<1/4$.

Let $\Sigma^-_m$ be the half space which is ``tangent'' to $\Omega^-_{k_m}$ at $z_m$, namely,
\[\Sigma^-_m:= \bigl\{(x-z_m)\cdot \nu(z_m) <0 \bigr\}.\]

The first step is showing that, for all $m$ and for all $r<1/4$,
\begin{equation}\label{bound1}
\bigl\| u_{k_m}- \phi_{m} \bigr\|_{L^\infty\left(B_{r}(z_m) \cap (\Omega^-_{k_m}\cup \Sigma^-_m) \right)} \le C r^{2s}\le C r^{2s}
\end{equation}
for some  constant $C$ depending only on $s$, $\rho_0$, ellipticity constants, and dimension.

Indeed, we may rescale and slide the supersolution $\varphi_1$ from Lemma \ref{supersol} and use the fact that all points of $\Gamma_{k_m}\cap B_{3/4}$ can be touched by balls of radius $\rho_0$ contained in $\Omega^-_{k_m}$.
We obtain that
\[|u_{k_m}|\le C \bigl({\rm dist}\,(x,\Omega^-_{k_m}) \bigr)^{s},\]
with $C$ depending only on $n$, $s$, $\rho_0$, and ellipticity constants.
On the other hand, by definition of $\phi_m$ we have
\[|\phi_{m}| \le C \bigl({\rm dist}\,(x,\Sigma^-_m) \bigr)^s.\]
But by assumption, points on $\Gamma_k\cap B_{3/4}$ can be also touched by balls of radius $\rho_0$ from the $\Omega^+_{k_m}$ side, and hence we have a quadratic control (depending only on $\rho_0$) on on how $\Gamma_{k_m}$ separates from the hyperplane $\partial\Sigma^-_m$.
As a consequence, in $B_{r}(z_m) \cap (\Omega^-_{k_m}\cup \Sigma^-_m)$ we have
\[C\bigl({\rm dist}\,(x,\Omega^-_{k_m}) \bigr)^s\leq Cr^{2s}\quad\textrm{and}\quad C\bigl({\rm dist}\,(x,\Sigma^-_{m}) \bigr)^s\leq Cr^{2s}.\]
Hence, \eqref{bound1} holds.

We use now Lemma \ref{propCgammaMs} to obtain that, for some small  $\gamma\in(0,s)$,
\[\|u_{k_m}\|_{C^\gamma(B_{1/8}(z_m))} \le C\quad \mbox{for all }m.\]
On the other hand, clearly
\[\|\phi_{m}\|_{C^\gamma(B_{1/8}(z_m))} \le C\quad \mbox{for all }m.\]
Hence,
\begin{equation}\label{bound2}
\bigl\| u_{k_m}- \phi_{m} \bigr\|_{C^\gamma\left(B_{r}(z_m) \cap (\Omega^-_{k_m}\cup \Sigma^-_m) \right)} \le C.
\end{equation}

Next, interpolating \eqref{bound1} and \eqref{bound2} we obtain, for some positive $\delta<\gamma$ small enough  (depending on $\gamma$, $s$, and $\beta$),
\begin{equation}\label{bound2b}
\bigl\| u_{k_m}- \phi_{m} \bigr\|_{C^\delta \left(B_{r}(z_m) \cap (\Omega^-_{k_m}\cup \Sigma^-_m) \right)} \le C r^{\beta}.
\end{equation}
Therefore, scaling \eqref{bound2b} we find that
\begin{equation}\label{bound3}
\bigl\| v_m \bigr\|_{C^\delta \left( B_R\setminus\Omega_{R,m}^+ \right)} \le C \quad \mbox{for all }m\mbox{ with }r_mR<1/4.
\end{equation}

Next we observe that the boundary points on $\partial \Omega_{R,m}^+ \cap B_{3R/4}$ can be touched by balls of radius $(\rho_0/r_m)\ge \rho_0$ contained in $B_R\setminus \Omega_{R,m}^+$.
We then apply Lemma \ref{propCgammaMs} (rescaled) to $v_m$.
Indeed, we have that $v_m$ solves \eqref{eqvm0} and satisfies \eqref{bound3}.
Thus, we obtain, for some $\delta'\in(0,\delta)$,
\begin{equation}\label{estest}
\bigl\| v_m \bigr\|_{C^{\delta'} ( B_{R/2} )} \le C(R), \quad \mbox{for all }m\mbox{ with }r_mR<1/4,
\end{equation}
where we write $C(R)$ to emphasize the dependence on $R$ of the constant, which also depends on $s$, $\rho_0$, ellipticity constants, and dimension, but not on $m$.

As said above, the Arzel\`a-Ascoli theorem and the previous uniform (in $m$) $C^{\delta'}$ estimate  \eqref{estest} yield the local uniform convergence  in $\R^n$ of a subsequence of  $v_{m}$  to some function $v$.

In addition, by Lemma \ref{lem-subseq} there is a subsequence of $L_{k_m}$ which converges weakly to some operator $L$, which is of the form \eqref{operator-L} and satisfies \eqref{ellipt-const}.
Hence, it follows that $L v=0$ in all of $\R^n$, and thus the Claim is proved.

\vspace{2mm}

Finally, passing to the limit the growth control \eqref{growthc0-3} on $v_m$ we find $\|v\|_{L^\infty(B_R)}\le R^{\beta}$ for all $R\ge1$.
Hence, by Theorem \ref{Liouv-half}, it must be
\[v(x)=K \bigl(x\cdot \nu(z)\bigr)_+^s.\]
Passing \eqref{2-3} to the limit, we find
\[\int_{B_1} v(x) \bigl(x\cdot \nu(z)\bigr)_+^s \,dx =0.\]
But passing \eqref{nondeg35-3} to the limit, we reach the contradiction.
Thus, the Proposition is proved.
\end{proof}

Before giving the proof of Theorem \ref{thm-bdry-reg}, we prove the following.

\begin{lem}
Let $\Gamma$ be a $C^{1,1}$ surface of radius $\rho_0>0$ splitting $B_1$ into $U^+$ and $U^-$; see Definition \ref{defi-split}.
Let $d(x)= {\rm dist}\,(x,U^-)$.
Let $x_0\in B_{1/2}$ and $z\in \Gamma$ be such that
\[ {\rm dist}\,(x_0,\Gamma) = {\rm dist}\,(x_0,z) =: 2r.\]
Then,
\begin{equation}\label{efossas0}
\left\| \bigl((x-z)\cdot\nu(z)\bigr)_+^s - d^s(x)\right\|_{L^\infty(B_r(x_0))}\le Cr^{2s},
\end{equation}
\begin{equation}\label{efossas1}
 \left[ d^s - \bigl((x-z)\cdot\nu(z)\bigr)_+^s\right]_{C^{s-\epsilon}(B_r(x_0))}\le Cr^s,
\end{equation}
and
\begin{equation}\label{efossas2}
 \left[ d^{-s} \right]_{C^{s-\epsilon}(B_r(x_0))}\le Cr^{-2s+\epsilon}.
\end{equation}
The constant $C$ depends only on $\rho_0$.
\end{lem}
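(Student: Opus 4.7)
The plan is to reduce the three estimates to two simple quantitative geometric facts about the distance function $d$ inside the ball $B_r(x_0)$, and then derive the H\"older bounds by elementary calculus with powers. Throughout I write $\ell(x):=(x-z)\cdot\nu(z)$, so that the half-space profile is $\ell_+^s$.

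\emph{Step 1 (geometric setup).} Since $z\in\Gamma$ realizes the distance from $x_0$, the segment $x_0-z$ points along $\nu(z)$, so $\ell(x_0)=|x_0-z|=2r=d(x_0)$. A one-line triangle estimate then gives $r\le\ell(x),\,d(x)\le 3r$ throughout $B_r(x_0)$; in particular $\ell_+=\ell$ on this ball, and both functions are comparable to $r$. From the $C^{1,1}$ hypothesis (interior/exterior tangent ball of radius $\rho_0$ at every point of $\Gamma\cap\overline{B_{3/4}}$), letting $\pi(x)$ denote the closest-point projection of $x$ onto $\Gamma$, I would extract the two quantitative bounds
\begin{equation}\label{geom1}
|d(x)-\ell(x)|\le Cr^{2},\qquad |\nabla d(x)-\nu(z)|\le Cr\qquad\text{in }B_r(x_0),
\end{equation}
with $C=C(\rho_0)$. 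The first follows from the decomposition $\ell(x)=d(x)\,\nu(\pi(x))\!\cdot\!\nu(z)+(\pi(x)-z)\!\cdot\!\nu(z)$ together with the quadratic touching estimate $|(\pi(x)-z)\!\cdot\!\nu(z)|\le|\pi(x)-z|^{2}/(2\rho_0)$ and the identity $1-\nu(\pi(x))\!\cdot\!\nu(z)=\tfrac12|\nu(\pi(x))-\nu(z)|^{2}\le Cr^{2}$, using $|\pi(x)-z|\le Cr$. The second bound is just $\nabla d(x)=\nu(\pi(x))$ together with the Lipschitz continuity of $\nu$ along $\Gamma$ (constant $1/\rho_0$).

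\emph{Step 2 (deducing the three estimates).} For \eqref{efossas0}, the mean value theorem applied to $t\mapsto t^{s}$ on the interval between $d(x)$ and $\ell(x)$ (both in $[r,3r]$) gives $|d^{s}-\ell_+^{s}|\le s\,r^{s-1}|d-\ell|\le Cr^{s+1}\le Cr^{2s}$, where we used $s\le 1$ and $r\le 1$. For \eqref{efossas1}, in $B_r(x_0)$ I compute
\[
\nabla(d^{s}-\ell^{s})=s\,d^{s-1}\bigl(\nabla d-\nu(z)\bigr)+s\,(d^{s-1}-\ell^{s-1})\,\nu(z).
\]
The first term is bounded by $Cr^{s-1}\!\cdot r=Cr^{s}$ by the second inequality in \eqref{geom1}, and the second by $Cr^{s-2}\!\cdot r^{2}=Cr^{s}$ (MVT on $t\mapsto t^{s-1}$, combined with the first inequality in \eqref{geom1}). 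Hence $\|\nabla(d^{s}-\ell^{s})\|_{L^{\infty}(B_r(x_0))}\le Cr^{s}$, and since the ball has diameter $2r$ a standard Lipschitz-to-H\"older interpolation yields
\[
[d^{s}-\ell_+^{s}]_{C^{s-\epsilon}(B_r(x_0))}\le Cr^{s}(2r)^{1-s+\epsilon}=Cr^{1+\epsilon}\le Cr^{s}.
\]
For \eqref{efossas2} the argument is even more direct: $d\ge r$ on $B_r(x_0)$ forces $|\nabla d^{-s}|=s\,d^{-s-1}\le Cr^{-s-1}$, and the same conversion gives $[d^{-s}]_{C^{s-\epsilon}(B_r(x_0))}\le Cr^{-s-1}\!\cdot r^{1-s+\epsilon}=Cr^{-2s+\epsilon}$.

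The only step with any real content is \eqref{geom1}, which quantifies the $C^{1,1}$ hypothesis into an $O(r^{2})$ error between $d$ and its linear approximation $\ell$ and an $O(r)$ error between their gradients near $z$. This is standard for surfaces with a uniform two-sided tangent-ball condition and is the only place the geometric assumption enters the proof; the remainder is one-variable calculus on powers. I therefore anticipate no serious obstacle beyond writing \eqref{geom1} carefully.
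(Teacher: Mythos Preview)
Your proposal is correct and follows essentially the same approach as the paper: both proofs reduce everything to the two geometric facts $|d-\ell|\le Cr^{2}$ and $|\nabla d-\nu(z)|\le Cr$ on $B_r(x_0)$, then use elementary calculus on powers (mean value theorem for $t\mapsto t^{s-1}$, the decomposition $\nabla(d^s-\ell^s)=s\,d^{s-1}(\nabla d-\nu(z))+s(d^{s-1}-\ell^{s-1})\nu(z)$, and the Lipschitz-to-H\"older conversion) to obtain the three estimates. The only cosmetic differences are that you derive \eqref{efossas0} via the MVT bound $Cr^{s+1}\le Cr^{2s}$ whereas the paper implicitly uses $|a^s-b^s|\le|a-b|^s$ to get $Cr^{2s}$ directly, and for \eqref{efossas2} you use only the Lipschitz bound while the paper phrases it as an interpolation with the $L^\infty$ bound; your version is marginally cleaner there.
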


\begin{proof}
Let us denote
\[ \bar d(x) = \bigl((x-z)\cdot\nu(z)\bigr)_+.\]

First, since $\Gamma$ is $C^{1,1}$ with curvature radius bounded below by $\rho_0$, we have that
$|\bar d- d|\le Cr^2$ in $B_r(x_0)$, and thus \eqref{efossas0} follows.

To prove \eqref{efossas1} we use on the one hand that
\begin{equation}\label{usem1}
 \left\| \nabla d - \nabla \bar d\right\|_{L^\infty(B_r(x_0))}\le Cr,
 \end{equation}
which also follows from the fact that $\Gamma$ is $C^{1,1}$.
On the other hand, using the inequality $|a^{s-1}-b^{s-1}| \le  |a-b| \max\{ a^{s-2}, b^{s-2}\}$ for $a,b>0$,
we find
\begin{equation}\label{usem2}
\left\| d^{s-1}- {\bar d}^{s-1} \right\|_{L^\infty(B_r(x_0))}
\le C r^2 \max\left\{ \left\|d^{s-2}\right\|_{L^\infty(B_r(x_0))}\,,\, \left\| \bar d^{s-2} \right\|_{L^\infty(B_r(x_0))}\right\} \le C r^{s}.
\end{equation}
Thus, using \eqref{usem1} and \eqref{usem2}, we deduce
\[  \left[ d^s - \bar d^s\right]_{C^{0,1}(B_r(x_0))} =
\left\| d^{s-1}\nabla d - \bar d^{s-1} \nabla \bar d\,\right\|_{L^\infty(B_r(x_0))}\leq Cr^s.\]
Therefore, \eqref{efossas1} follows.

Finally, interpolating the inequalities
\[ \left[ d^{-s} \right]_{C^{0,1}(B_r(x_0))}=\|d^{-s-1}\nabla d\|_{L^\infty(B_r(x_0))}\leq C r^{-s-1} \quad\textrm{and}\quad \|d^{-s}\|_{L^\infty(B_r(x_0))}\leq Cr^{-s},\]
\eqref{efossas2} follows.
\end{proof}

We can finally give the

\begin{proof}[Proof of Theorem \ref{thm-bdry-reg}]
First, by Proposition \ref{eq-domain-Cs}, we have $\|u\|_{L^\infty(\Omega)}\leq C\|f\|_{L^\infty(\Omega)}$.
We may assume that
\[ \|u\|_{L^\infty(\R^n)}+ \|f\|_{L^\infty(\Omega^+)}\le 1.\]

Let us pick any point on $\partial\Omega$, and let us see that $u/d^s$ is $C^{s-\epsilon}$ around this point.
Rescaling and translating $\Omega$ if necessary, we may assume that $0\in \partial\Omega$, and that the sets $U^+=\Omega\cap B_1$ and $U^-=B_1\setminus\Omega$ satisfy the conditions in Definition \ref{defi-split} (with $\Gamma=B_1\cap \partial\Omega$).

Then, by Proposition \ref{proponbdryreg} we have that, for all $z\in \Gamma\cap \overline{B_{1/2}}$,
there is $Q=Q(z)$ such that
\begin{equation}\label{quinnom}
|Q(z)|\le C\quad \mbox{and}\quad \|u-Q\, \bigl((x-z)\cdot\nu(z)\bigr)_+^s\|_{L^\infty(B_{R}(z))}\le C R^{2s-\epsilon}
\end{equation}
for all $R>0$, where $C$ depends only on $n$, $s$, $\rho_0$, $\epsilon$, and ellipticity constants.

Now, to prove the $C^{s-\epsilon}$ estimate up to the boundary for $u/d^s$ we must combine a $C^s$ interior estimate
for $u$ with \eqref{quinnom}.

Let $x_0$ be a point in $\Omega^+\cap B_{1/4}$, and let $z\in \Gamma$ be such that
\[2r:={\rm dist}\,(x_0, \Gamma) = {\rm dist}\,(x_0,z) < \rho_0.\]
Note that $B_r(x_0)\subset B_{2r}(x_0)\subset\Omega^+$ and that $z\in \Gamma\cap B_{1/2}$ (since $0\in \Gamma$).

We claim now that there is $Q=Q(x_0)$ such that $|Q(x_0)|\leq C$,
\begin{equation}\label{lastgoal1}
\|u-Qd^s\|_{L^\infty(B_r(x_0))}\le C r^{2s-\epsilon},
\end{equation}
and
\begin{equation}\label{lastgoal2}
[u-Qd^s]_{C^{s-\epsilon}(B_r(x_0))}\le C r^s,
\end{equation}
where the constant $C$ depends only on $n$, $s$, $\epsilon$, $\rho_0$, and ellipticity constants.

Indeed, \eqref{lastgoal1} follows immediately combining \eqref{quinnom} and \eqref{efossas0}.

To prove \eqref{lastgoal2}, let
\[ v_r(x)= r^{-s} u(z+rx)- Q\,(x\cdot\nu(z))_+^s.\]
Then, \eqref{quinnom}  implies
\[ \|v_r\|_{L^\infty(B_4)}\le C r^{s-\epsilon}\]
and
\[ \|v_r\|_{L^\infty(B_R)}\le C r^{s-\epsilon}R^s.\]
Moreover, $v_r$ solves the equation
\[ L v_r = r^s f(z+rx)\quad \mbox{in }B_2(\tilde x_0),\]
where $\tilde x_0= (x_0-z)/r$ satisfies $|\tilde x_0-z| =2$.
Hence, using the interior estimate in Corollary \ref{cor-interior-ball-growth} we obtain $[v_r]_{C^{s-\epsilon}(B_1(\tilde x_0))}\le Cr^{s-\epsilon}$.
This yields that
\[ r^{s-\epsilon }\left[u-Q\, \bigl((x-z)\cdot\nu(z)\bigr)_+^s\right]_{C^{s-\epsilon}(B_r(x_0))} = r^s[v]_{C^{s-\epsilon}(B_1(\tilde x_0))} \le C r^s r^{s-\epsilon}.\]
Therefore, using \eqref{efossas1}, \eqref{lastgoal2} follows.

Let us finally show that \eqref{lastgoal1}-\eqref{lastgoal2} yield the desired result.
Indeed, note that, for all $x_1$ and $x_2$ in $B_{r}(x_0)$,
\[\frac{u}{d^s}(x_1)-\frac{u}{d^s}(x_2) = \frac{\bigl(u-Qd^s\bigr)(x_1)-\bigl(u-Qd^s\bigr)(x_2)}{d^s(x_1)} + \bigl(u-Qd^s\bigr)(x_2)\bigl(d^{-s}(x_1)-d^{-s}(x_2)\bigr).\]
By \eqref{lastgoal2}, and using that $d$ is comparable to $r$ in $B_r(x_0)$, we have
\[ \frac{\bigl|\bigl(u-Qd^s\bigr)(x_1)-\bigl(u-Qd^s\bigr)(x_2)\bigr|}{d^s(x_1)} \le C|x_1-x_2|^{s-\epsilon}.\]
Also, by \eqref{lastgoal1} and \eqref{efossas2},
\[  \bigl|u-Qd^s\bigr|(x_2)\bigl|d^{-s}(x_1)-d^{-s}(x_2)\bigr|\le  C|x_1-x_2|^{s-\epsilon}.\]
Therefore,
\[ [u/d^s]_{C^{s-\epsilon}(B_r(x_0))}\le C.\]
From this, we obtain the desired estimate for $\|u/d^s\|_{C^{s-\epsilon}(\Omega^+\cap B_{1/2})}$ by summing a geometric series, as in the proof of Proposition 1.1 in \cite{RS-Dir}.
\end{proof}

\section{Final comments and remarks}
\label{sec7}

Even for the fractional Laplacian, all the interior regularity results are sharp; see for example Section 7 in \cite{Bass}.
The only difference between Theorem \ref{thm-interior-ball}(b) and the classical interior estimate for the fractional Laplacian is that we need to assume that $u\in C^\alpha(\R^n)$ in order to have a $C^{\alpha+2s}$ estimate in $B_{1/2}$.
We show here that this assumption is in fact necessary.

\begin{prop}\label{contraexemple-interior}
Let $s\in (0,1)$, and let $L$ be the operator in $\R^2$ given by \eqref{op-deltas}.
Let $\alpha\in(0,s]$, and $\epsilon>0$ small.

Then, there exists a function $u$ satisfying:
\begin{itemize}
\item[(i)] $Lu=0$ in $B_1$
\item[(ii)] $u\in C^{\alpha-\epsilon}(\R^n)$
\item[(iii)] $u\equiv0$ in $B_2\setminus B_1$
\item[(iv)] $u\notin C^{\alpha+2s}(B_{1/2})$
\end{itemize}
\end{prop}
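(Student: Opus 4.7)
My plan is to construct the counterexample as a perturbation $u = g + w$, where $g$ is an explicit exterior ``ansatz'' carrying a $C^{\alpha-\epsilon}$ singularity purely in one coordinate direction, and $w$ is the Dirichlet correction that restores $Lu = 0$ in $B_1$. The reason this should work is that the operator \eqref{op-deltas} decouples as a sum of 1D fractional Laplacians along the coordinate axes; a singularity of the forcing purely in the $x_2$-direction is therefore not averaged away by the $x_1$-contribution of the operator, and it obstructs the Schauder gain of $2s$ derivatives in \eqref{estimate-thm-1-b}.

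Concretely, I would fix a bump $\phi \in C^\infty_c(\R)$ with $\phi \geq 0$, $\phi \not\equiv 0$, and $\mathrm{supp}\,\phi \subset \{|t|\geq 3\}$; and a 1D profile $\eta \in C^{\alpha-\epsilon}(\R)$ with compact support such that $\eta$ fails to be $C^\alpha$ in any neighborhood of $t=0$ (e.g.\ $\eta(t) = \chi(t)|t|^{\alpha-\epsilon}$ with $\chi$ a smooth cutoff). Set
\[
g(y_1, y_2) := \phi(y_1)\,\eta(y_2).
\]
Then $g \in C^{\alpha-\epsilon}(\R^2)$ and, since $\phi \equiv 0$ on $[-2,2]$, we have $g \equiv 0$ on $B_2$, and in particular on $B_2 \setminus B_1$. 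Let $w$ be the weak solution of the Dirichlet problem $Lw = -Lg$ in $B_1$, $w = 0$ in $\R^2 \setminus B_1$, and define $u := g + w$.

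To verify (i)--(iii) I would note that for $x \in B_1$ one has $\phi(x_1) = 0$, so
\[
-Lg(x_1, x_2) = \eta(x_2)\,H(x_1), \qquad H(x_1) := (-\partial_{x_1x_1})^s\phi(x_1),
\]
with $H \in C^\infty((-1,1))$ and $H(0) = -c_s \int \phi(t)\,|t|^{-1-2s}\,dt \neq 0$. In particular $-Lg \in L^\infty(B_1)$, so Theorem \ref{thm-bdry-reg} gives $w \in C^s(\R^2)$. Since $\alpha \leq s$, the sum $u = g + w$ belongs to $C^{\alpha-\epsilon}(\R^2)$, yielding (ii). Property (i) holds by construction, and (iii) follows because both $g$ and $w$ vanish on $B_2\setminus B_1$. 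The heart of the argument is (iv). Since $\phi \equiv 0$ on $B_{1/2}$, we have $u = w$ on $B_{1/2}$. Suppose for contradiction that $w \in C^{\alpha+2s}(B_{1/2})$. The classical pointwise Schauder-type theory for the 1D fractional Laplacian---applied to each factor $(-\partial_{x_ix_i})^s$ acting on the 1D sections of $w$, combined with the global bound $w \in L^\infty(\R^2)$ to control the tail of the singular integral---yields $(-\partial_{x_ix_i})^s w \in C^\alpha(B_{1/4})$ for $i = 1,2$. Summing, $Lw \in C^\alpha(B_{1/4})$. But on $B_1$ we have $Lw = -Lg = \eta(x_2)\,H(x_1)$, and this product is not $C^\alpha$ at $(0,0) \in B_{1/4}$: the factor $H$ is smooth with $H(0)\neq 0$, while $\eta$ fails to be $C^\alpha$ at $0$. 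This contradiction proves (iv).

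The main technical obstacle is the pointwise regularity transfer used in the last step, namely that $w \in C^{\alpha+2s}(B_{1/2}) \cap L^\infty(\R^2)$ implies pointwise $C^\alpha$ regularity of $(-\partial_{x_ix_i})^s w$ on any interior subball. This is the sharp interior estimate for the 1D fractional Laplacian, made sense of here on 1D sections of the 2D function $w$; it is established by splitting the integrand into a near piece (where one uses the Taylor estimate coming from $C^{\alpha+2s}$ to beat the $|t|^{-1-2s}$ kernel) and a far piece (where one uses only $w \in L^\infty$, as in the proof of Proposition \ref{heat-kernel-decay}).
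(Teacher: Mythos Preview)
Your construction is essentially the same as the paper's: both take exterior data carrying a one-directional $C^{\alpha-\epsilon}$ singularity (the paper uses $u_0(x)=(x_1)_+^{\alpha-\epsilon}\eta(x)$ supported near $(0,4)$, you use the tensor product $g=\phi\otimes\eta$), solve the Dirichlet problem in $B_1$, and derive (iv) by the contrapositive that local $C^{\alpha+2s}$ regularity of the interior part would force $L$ of the exterior part to be $C^\alpha$, which a direct computation rules out. Your tensor-product choice makes that final computation a bit cleaner than the paper's two-point evaluation of $Lu_2$.

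There is one genuine imprecision. The implication you state, ``$w\in C^{\alpha+2s}(B_{1/2})\cap L^\infty(\R^2)\Rightarrow (-\partial_{x_ix_i})^s w\in C^\alpha(B_{1/4})$'', is false as written: for the far piece $\int_{|t|>c}\bigl(w(x+te_i)-w(x'+te_i)\bigr)|t|^{-1-2s}\,dt$ you need a global $C^\alpha$ modulus for $w$, not merely $L^\infty$. (An $L^\infty$ function with a jump in the exterior produces a discontinuity in this tail integral.) The fix is immediate and already contained in your argument: you established $w\in C^s(\R^2)$ via the boundary estimate, and since $\alpha\le s$ this gives $w\in C^\alpha(\R^2)$, which is exactly what the far piece requires. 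The paper uses the same implication (``$u_1\in C^\alpha(\R^n)\cap C^{\alpha+2s}(B_{1/2})\Rightarrow Lu_1\in C^\alpha(B_{1/4})$'') without spelling out the near/far split; just correct your hypothesis from $L^\infty(\R^2)$ to $C^\alpha(\R^2)$ and the step goes through.
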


This means that in Theorem \eqref{thm-interior-ball}(b) the $C^\alpha(\R^n)$ norm on the right hand side can not be removed.

Concerning our boundary regularity result, we also expect it to be sharp for general stable operators \eqref{operator-L}-\eqref{ellipt-const}.
Indeed, while for the fractional Laplacian (and for any operator \eqref{operator-L2} with $a\in C^\infty(S^{n-1})$) one has that $(-\Delta)^s(d^s)$ is $C^\infty(\overline\Omega)$ whenever $\Omega$ is $C^\infty$ (see \cite{Grubb}), in this case we have the following.

\begin{prop}\label{prop-appendix}
There exists an operator of the form \eqref{operator-L}-\eqref{ellipt-const} and a $C^\infty$ bounded domain $\Omega\subset\R^n$ for which
\[L(d^s)\notin L^\infty(\Omega),\]
where $d(x)$ a $C^\infty$ function satisfying $d\equiv0$ in $\R^n\setminus\Omega$, and that coincides with ${\rm dist}(x,\R^n\setminus\Omega)$ in a neighborhood of $\partial\Omega$.
\end{prop}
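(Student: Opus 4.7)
The plan is to construct an explicit counterexample in dimension $n=2$. I would take $L$ with spectral measure $\mu=\delta_{e_1}+\delta_{-e_1}+\delta_{e_2}+\delta_{-e_2}$, so that $L$ is (up to a multiplicative constant) the operator $-\bigl((-\partial_{x_1x_1})^s+(-\partial_{x_2x_2})^s\bigr)$ from \eqref{op-deltas}; this clearly satisfies \eqref{operator-L}-\eqref{ellipt-const}. For the domain I take the $C^\infty$ annulus $\Omega=B_2\setminus\overline{B_1}$, and I consider the sequence $x_\epsilon=(0,1+\epsilon)\in\Omega$ approaching the inner boundary point $(0,1)$ as $\epsilon\to 0^+$. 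Note that $d(x_\epsilon)=\epsilon$ and that at $(0,1)$ the tangent to $\partial\Omega$ is the $x_1$-axis, with $\Omega$ on the concave side of the inner boundary.

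The main step is to study the horizontal contribution
\[I_\epsilon:=\int_{-\infty}^\infty\bigl(f(h)+f(-h)-2f(0)\bigr)\frac{dh}{|h|^{1+2s}},\qquad f(h):=d^s(h,1+\epsilon),\]
and to show $I_\epsilon\to+\infty$ as $\epsilon\to 0^+$. Along the horizontal line through $x_\epsilon$, for $|h|$ less than the threshold $\sqrt{5/4-2\epsilon-\epsilon^2}$ the closest boundary is the inner circle, so $d(h,1+\epsilon)=\sqrt{h^2+(1+\epsilon)^2}-1=\epsilon+\frac{h^2}{2(1+\epsilon)}+O(h^4)$. Therefore $f(h)\sim 2^{-s}|h|^{2s}$ in the range $\sqrt\epsilon\ll|h|\ll 1$, while $f(0)=\epsilon^s$.

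I would then split $I_\epsilon$ into pieces according to the size of $|h|$. For $|h|\le\sqrt\epsilon$ one has $f(h)-f(0)=O(\epsilon^{s-1}h^2)$, giving integrand $O(\epsilon^{s-1}|h|^{1-2s})$ with $\int_0^{\sqrt\epsilon}\epsilon^{s-1}h^{1-2s}\,dh=O(1)$; for $\sqrt\epsilon<|h|<1$ the integrand is of size $|h|^{2s-1-2s}=|h|^{-1}$, producing the divergent contribution $\sim\log(1/\epsilon)$; and for $|h|>1$ the function $f$ is bounded and eventually zero, so the tail contributes $O(1)$. Hence $I_\epsilon=c\log(1/\epsilon)+O(1)$ for an explicit constant $c>0$.

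Next I would check that the vertical contribution remains bounded as $\epsilon\to 0^+$. Along the vertical line through $x_\epsilon$ the distance satisfies $d(0,1+\epsilon+t)=\epsilon+t$ for $t\in[-\epsilon,\tfrac12-\epsilon]$, so $d^s$ coincides exactly with $(\epsilon+t)_+^s$ on this interval. Since $\int_{-\infty}^\infty\bigl((\epsilon+t)_+^s+(\epsilon-t)_+^s-2\epsilon^s\bigr)|t|^{-1-2s}\,dt=0$ (this is the identity $(-\partial^2)^s y_+^s\equiv 0$ in $\{y>0\}$), the vertical integral reduces to the contribution of the difference between $d^s$ and this half-space profile, which is supported at distance uniformly bounded below from $0$ (the switch to the outer boundary at $t\approx\tfrac12$ and the second component of the vertical line through the annulus near $t\approx -2$) and yields only an $O(1)$ term. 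Combining the two directions, $L(d^s)(x_\epsilon)\to+\infty$ as $\epsilon\to 0^+$, so $L(d^s)\notin L^\infty(\Omega)$. The main obstacle is merely bookkeeping: making the error terms in the Taylor expansion of $\sqrt{h^2+(1+\epsilon)^2}-1$ and in the $\sim$ comparisons uniform in $\epsilon$. Since $\partial\Omega$ is real-analytic these expansions are completely explicit, so the rigorous estimate follows.
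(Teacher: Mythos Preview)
Your construction is correct and follows essentially the same strategy as the paper: take the operator \eqref{op-deltas} (in $n=2$), a domain whose boundary locally coincides with the unit sphere from the exterior side, and approach a boundary point along the normal coordinate direction; then split $L(d^s)$ into the normal piece (controlled by the one-dimensional identity $(-\partial^2)^s(y_+^s)=0$) and the tangential piece, which diverges because of the curvature. The only difference is in how the tangential divergence is extracted: the paper performs the change of variables $y=x'/\sqrt r$ to rescale the tangential integral to $L_1\bigl[(2+|y|^2)^s\bigr](0)=+\infty$, whereas you split the integral at $|h|=\sqrt\epsilon$ and read off the explicit rate $\sim c\log(1/\epsilon)$; your computation is slightly more hands-on but yields the same conclusion.
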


As a consequence of the previous example, we do not expect the estimates in Theorem \ref{thm-bdry-reg} to hold at order $s$.
In other words, we do not expect $u/d^s$ to be $C^s(\overline\Omega)$.

We next show Propositions \ref{contraexemple-interior} and \ref{prop-appendix}.

\begin{proof}[Proof of Proposition \ref{contraexemple-interior}]
Let
\[u_0(x)=(x_1)_+^{\alpha-\epsilon}\eta(x),\]
where $\eta\in C^\infty_c(B_2(p))$, $p=(0,4)$, and $\eta\equiv1$ in $B_1(p)$.
Let $u$ be the solution to
\[\left\{ \begin{array}{rcll}
L u &=&0&\textrm{in }B_1 \\
u&=&u_0&\textrm{in }\R^n\setminus B_1.
\end{array}\right.\]
Then, $u$ clearly satisfies (i), (ii), (iii).

Let us show next that $u\notin C^{\alpha+2s}(B_{1/2})$ by contradiction.
Assume $u\in C^{\alpha+2s}(B_{1/2})$, and define $u_1=u\chi_{B_1}$, and $u_2=u-u_1$.
Notice that $u_1\in C^\alpha(\R^n)$ (by Proposition \ref{eq-domain-Cs}, since $Lu_1=-Lu_2\in L^\infty(B_1)$ and $\alpha\leq s$) and $u_1\in C^{\alpha+2s}(B_{1/2})$ (by Theorem \ref{thm-interior-ball}).
Thus, we have $Lu_1\in C^\alpha(B_{1/4})$.
Therefore, we also have
\[Lu_2\in C^\alpha(B_{1/4}).\]
since $Lu_2=-Lu_1$ in $B_1$.

Now recall that
\[Lw(a,b)=\int_{-\infty}^\infty\frac{w(a,b)-w(a,b+t)}{|t|^{1+2s}}\,dt+\int_{-\infty}^\infty\frac{w(a,b)-w(a+t,b)}{|t|^{1+2s}}\,dt.\]
Hence, taking the points $x_1=(0,0)$ and $x_2=(\delta,0)$, with $\delta>0$ small, we have
\[Lu_2(x_1)-Lu_2(x_2)=\int_{-\infty}^\infty\frac{u_2(\delta,t)-u_2(0,t)}{|t|^{1+2s}}\,dt,\]
where we have used that $u_2$ has support in $B_2(p)$.
Also, $u_2(0,t)=0$ for all $t$, and hence
\[Lu_2(x_1)-Lu_2(x_2)=\int_{-\infty}^\infty\frac{u_2(\delta,t)}{|t|^{1+2s}}\,dt>\int_3^4\frac{C\delta^{\alpha-\epsilon}}{|t|^{1+2s}}dt=c\delta^{\alpha-\epsilon}.\]
Therefore,
\[\frac{Lu_2(x_1)-Lu_2(x_2)}{|x_1-x_2|^\alpha}>c\delta^{-\epsilon},\]
and hence $Lu_2\notin C^\alpha(B_{1/4})$, a contradiction.
\end{proof}

We finally give the

\begin{proof}[Proof of Proposition \ref{prop-appendix}]
We take $\Omega$ to coincide with $\tilde\Omega=\{x\in \R^n\,:\, |x|>1\}$ in a neighborhood of $z_0=(1,0,...,0)$.
Then, in a neighborhood of $x_0$, we have $d^s(x)=(|x|-1)^s$.

We will show that $L(d^s)$ is not bounded in a neighborhood of $z_0$.
Equivalently, we will show that $Lu$ is not bounded in a neighborhood of $z_0$, where
\[u(x)=(|x|^2-1)^s\eta(x),\]
where $\eta$ is a smooth function satisfying $\eta\equiv1$ in $B_\delta(z_0)$ and $\eta\equiv0$ outside $B_{2\delta}(z_0)$, where $\delta>0$ is small enough so that $\partial\Omega$ coincides with $\partial\tilde\Omega$ in $B_{2\delta}(z_0)$.

We claim that $Lu$ is bounded if and only if $L(d^s)$ is bounded, because the quotient of these two functions is $C^\infty(\overline\Omega)$.
Indeed, let $w$ be any $C^\infty(\R^n)$ extension of $u/d^s|_\Omega$.
Then, we have
\[Lu=L(d^sw)=wL(d^s)+d^sLw-I_L(d^s,w),\]
where $I_L$ is the bilinear form associated to the operator $L$.
Now, $w$ is $C^\infty$ and $d^s$ is $C^s$, it turns out that $Lw$ and $I_L(d^s,w)$ belong to $L^\infty(\Omega)$.
Hence, using that $w$ is bounded by above and below by positive constants, we find that
\[Lu\in L^\infty(\Omega)\ \Longleftrightarrow\ L(d^s)\in L^\infty(\Omega),\]
as claimed.

Notice now that, since $u$ is bounded at infinity, then to prove the boundedness of $Lu(x)$ it is only important the values of $u$ in a neighborhood of $x$.

Let $x=(x_1,x')$, with $x'\in \R^{n-1}$.
Let us restrict the function $d^s$ to the hyperplane $\{x_1=1+r\}$, with $r>0$ very small.
We find that
\[\begin{split}
u(1+r,x')&=\bigl((1+r)^2+|x'|^2-1\bigr)^s\eta(1+r,x')=(2r+r^2+|x'|^2)^s\eta(1+r,x')\\
&=r^s\left(2+r+\left|\frac{x'}{\sqrt r}\right|^2\right)^s\eta(1+r,x').\end{split}\]
Thus, if we choose $L=L_1+L_2$, with $L_1$ being the $(n-1)$-dimensional fractional Laplacian in the $(x_2,...,x_n)$ variables, and $L_2$ the $1$-dimensional fractional Laplacian in the $x_1$-variable, we find that
\[Lu(1+r,0,...,0)=L_1v^{(r)}(0)+L_2v_2(r),\]
where $v^{(r)}(x')=r^s\left(2+r+\left|\frac{x'}{\sqrt r}\right|^2\right)^s\eta(1+r,x')$, and $v_2(r)=r_+^s+(-1-r)_+^s$.
Since $L_2[(r_+)^s]=0$, then $L_2v_2(r)$ is bounded for $r>0$.
Thus, to prove that $Lu$ is not bounded in $\Omega$ it suffices to show that $L_1v^{(r)}(0)\rightarrow\infty$ as $r\downarrow0$.

But, defining
\[\tilde v^{(r)}(y)=\left(2+r+\left|y\right|^2\right)^s\eta(1+r,ry)\]
we have that
\[L_1v^{(r)}(0)=L_1\tilde v^{(r)}(0).\]
Finally, as $r\downarrow 0$, we have that $\eta(1+r,ry)$ converges to the constant function 1 in all of $\R^n$, and hence it is immediate to see that
\[\lim_{r\rightarrow0}L_1\tilde v^{(r)}(0)=+\infty,\]
as desired.
\end{proof}

%

\end{document}